\newcommand{\R}{\mathbb R}
\newcommand{\N}{\mathbb N}
\newcommand{\E}{\mathbb E}
\newcommand{\Pro}{\mathbb P}
\newcommand{\Var}{\mathrm{Var}}
\newcommand{\Cov}{\mathrm{Cov}}
\def\dint{\textup{d}}
\newcommand{\SSS}{\ensuremath{{\mathbb S}}}
\newcommand{\B}{\ensuremath{{\mathbb B}}}
\newcommand{\rate}{\mathbb I}
\DeclareMathOperator{\vol}{vol}
\newtheorem{thm}{Theorem}[section]
\newtheorem{cor}[thm]{Corollary}
\newtheorem{lemma}[thm]{Lemma}
\newtheorem{rmk}[thm]{Remark}
\newtheorem{thmalpha}{Theorem}
\def\bC{\mathbf{C}}
\def\bP{\mathbf{P}}
\def\bU{\mathbf{U}}
\def\bW{\mathbf{W}}
\begin{document}

%%%%%%%%%%%%%%%%%%%%%%%%%%%%%%%%%%%%%%%%%%%%%5

\title[LDP, MDP and the KLS conjecture]{Large deviations, moderate deviations,\\ and the KLS conjecture}

\author[D. Alonso-Guti\'errez]{David Alonso-Guti\'errez}
\address{Departamento de Matem\'aticas, Universidad de Zaragoza, Spain} \email{alonsod@unizar.es}

\author[J. Prochno]{Joscha Prochno}
\address{Institut f\"ur Mathematik \& Wissenschaftliches Rechnen, Karl-Franzens-Universit\"at Graz, Austria} \email{joscha.prochno@uni-graz.at}

\author[C. Th\"ale]{Christoph Th\"ale}
\address{Faculty of Mathematics, Ruhr University Bochum, Germany} \email{christoph.thaele@rub.de}

\keywords{Asymptotic geometric analysis, convex bodies, $\ell_p^n$-balls, KLS conjecture, large deviations principle, moderate deviations principle, random projections, stochastic geometry}
\subjclass[2010]{Primary: 60F10, 52A23 Secondary: 60D05, 46B09}

\begin{abstract}
Having its origin in theoretical computer science, the Kannan-Lov\'asz-Simonovits (KLS) conjecture is one of the major open problems in asymptotic convex geometry and high-dimensional probability theory today. In this work, we establish a new connection between this conjecture and the study of large and moderate deviations for isotropic log-concave random vectors, thereby providing a novel possibility to tackle the conjecture. We then study the moderate deviations for the Euclidean norm of random orthogonally projected random vectors in an $\ell_p^n$--ball. This leads to a number of interesting observations: (A) the $\ell_1^n$--ball is critical for the new approach; (B) for $p\geq 2$ the rate function in the moderate deviations principle undergoes a phase transition, depending on whether the scaling is below the square-root of the subspace dimensions or comparable; (C) for $1\leq p<2$ and comparable subspace dimensions, the rate function again displays a phase transition depending on its growth relative to $n^{p/2}$.
\end{abstract}

\maketitle

%\tableofcontents

% % % % % % % % % % % % % % % % % % % % % % % %
% % % % % % % % % % % % % % % % % % % % % % % %
\section{Introduction and Results}
% % % % % % % % % % % % % % % % % % % % % % % %
% % % % % % % % % % % % % % % % % % % % % % % %

One of the major open problems in asymptotic convex geometry is the famous Kannan-Lov\'asz-Simonovits (KLS) conjecture. This conjecture has its origin in theoretical computer science and arose in the study of sampling algorithms for high-dimensional convex bodies. One specific situation of interest in the theory is to design an algorithm that efficiently computes the volume of an $n$-dimensional convex body. Such an algorithm is fed with a convex body $K\subseteq \R^n$ and a quality parameter $\varepsilon\in(0,\infty)$, where the body $K$ is represented by a membership oracle, which, for each $x\in \R^n$ can decide whether or not $x\in K$. The algorithm returns a number $V=V(K,\varepsilon)\in\R$ such that
\[
(1-\varepsilon)\vol_n(K) \leq V \leq (1+\varepsilon)\vol_n(K)
\]
and the efficiency of the designed algorithm is measured in terms of the number of arithmetic operations and calls to the membership oracle. While deterministic algorithms are deemed to be inefficient (see the works of B\'ar\'any and F\"uredi \cite{BF1987} and of Elekes \cite{Elekes1986}), there are reasonable \textit{randomized} algorithms available that can accurately compute the volume of convex bodies with high probability in polynomial time, see Dyer, Frieze, and Kannan \cite{DFK1991}. At its core, the construction of this randomized algorithm is connected to an isoperimetric inequality for log-concave probability measures on $\R^n$ (see, e.g., \cite{AGB2015} for an introduction to the KLS conjecture and \cite{VEMP2010}  for a more detailed explanation on its connection with problems in theoretical computer science). The constant appearing in this isoperimetric  inequality, known as Cheeger's constant, is of particular interest and directly linked to the KLS conjecture, which by the work of Rothaus, Cheeger, Maz'ya, and Ledoux (see, e.g., \cite[Theorem 1.1]{BH1997}) can be stated as follows:

\bigskip

\textbf{KLS Conjecture.} \textit{There exists an absolute constant $C\in(0,\infty)$ such that for all $n\in\N$, every centered random vector $X$ with log-concave distribution and any locally Lipschitz function $f:\R^n\to\R$ such that $f(X)$ is of finite variance,
\[
\Var[f(X)] \leq C\, \lambda_X^2\, \E\big[\| \nabla f(X)\|_2^2\big],
\]
where $\lambda_X^2:=\sup_{\theta\in\SSS^{n-1}}\E\langle X,\theta\rangle^2$.}

\bigskip

Using what is known as the localization lemma, Kannan, Lov\'asz, and Simonovits proved the conjecture with a factor $(\E\|X\|_2)^2$ instead of $\lambda_X^2$ (see \cite{KLS1995}), and later Bobkov \cite{BOB2007} improved this estimate. But besides the fact that the KLS conjecture holds for exponential and Gaussian random vectors and has attracted considerable attention in the past 25 years, there has been very little progress to the present day (see \cite{BOB2003} and \cite{HUET2011} for a proof of the conjecture for the class of revolution bodies, \cite{BW2009} for the case of the simplex, \cite{LW2008} and \cite{S2008} for the case of the $\ell_p^n$-balls, \cite{K2013} for the case of $1$-unconditional bodies with an extra $\log n$ factor, and \cite{LV2017} for the best general estimate of the constant of the order $\sqrt{n}$ in the KLS conjecture).  One of the reasons for the interest in the KLS conjecture is that, if correct, it would imply several other well-known conjectures in asymptotic convex geometry, including the variance conjecture (which is the special case $f(\cdot) = \|\cdot\|_2$) and the hyperplane conjecture. However, given the little progress in the KLS conjecture, it is natural to watch out for potential counterexamples. This brings us to one of the main purposes of this paper, which is to establish a new connection between the KLS conjecture and the study of so-called large and moderate deviations principles for isotropic log-concave random vectors that provides a potential route to disprove the variance conjecture and thus the KLS conjecture.

While the study of central limit theorems for random geometric quantities in asymptotic convex geometry is by now a classical and still flourishing part of the theory, as can be seen in \cite{APT2019, GKT2019, JP2019, KLZ2015, KPT2019_I, KPT2019_II, K2007_I, PPZ14, Schmu2001}, fluctuations beyond the Gaussian scale have only recently attracted attention in this context. For one, there are the large deviations, or more precisely large deviations principles, which determine the asymptotic likelihood of rare events on a scale of a law of large numbers, the most classical result in this direction being Cram\'er's theorem \cite{C1938} (see also \cite{dH2000}). Contrary to the universality shown by a central limit theorem, the large deviation behavior is typically sensitive to the distribution of the underlying random objects. Being a classical topic in probability theory, large deviations have only recently been introduced in asymptotic convex geometry and high-dimensional probability by Gantert, Kim, and Ramanan \cite{GKR2017}. They obtained a large deviations principle for $1$-dimensional projections of $\ell_p^n$-balls in $\R^n$ as the space dimension $n$ tends to infinity, showing how the speed and rate depend on the parameter $p$ and thus in a subtle way on the geometry of the body. Their work inspired a number of investigations regarding the large deviations behavior of quantities that naturally appear in asymptotic convex geometry, such as Sanov-type large deviations for the random spectral measure of a properly normalized matrix chosen uniformly at random in the unit ball of a Schatten $p$-class \cite{KPT2019_sanov} (a non-commutative analogue of a result previously obtained by Kim and Ramanan for random vectors in high-dimensional $\ell_p^n$-spheres \cite{KR2018}), large deviations principles for $\ell_q$-norms of random vectors in $\ell_p^n$-balls \cite{KPT2019_I, KPT2019_II}, a description of the large deviations behavior for the Euclidean norm of orthogonal projections of $\ell_p^n$-balls to high-dimensional random subspaces \cite{APT2018}, and, very recently, large deviations under an asymptotic thin-shell condition \cite{KR2019} as well as geometric sharp large deviations for random projections of $\ell_p^n$-spheres \cite{LR2020}. The question now arises of what can be said about the asymptotic likelihood of events on a scaling \emph{between} Gaussian fluctuations covered by the central limit theorem and large deviations? This scaling is exactly the one covered by moderate deviations principles, which, in contrast to large deviations, are typically non-parametric in rate. In fact, usually moderate deviations principles inherit properties from both the central limit theorem and the large deviations principle in that the central limit theorem is still visible in form of a Gaussian (quadratic) rate function, while probabilities decay on an exponential scale as in the large deviations principle. In the framework of random geometric quantities in asymptotic convex geometry those have first been considered by Kabluchko, Prochno, and Th\"ale in \cite{KPT2019_II} for $\ell_q$-norms of random vectors in $\ell_p^n$-balls.

Now, let us be more precise and introduce what a large or moderate deviations principle formally is and then describe its relation to the KLS conjecture. We recall that a sequence $(X_n)_{n\in\N}$ of random vectors in $d$-dimensional Euclidean space $\R^d$ satisfies a large deviations principle (LDP) with \emph{speed} $s_n$ and \emph{rate function} $\rate:\R^d\to[0,\infty]$ if
\begin{equation}\label{eq:ldp definition}
\begin{split}
-\inf_{x\in A^\circ}\rate(x) &\leq\liminf_{n\to\infty}s_n^{-1}\log\Pro[X_n\in A]\leq\limsup_{n\to\infty}s_n^{-1}\log\Pro[X_n\in A]\leq-\inf_{x\in\overline{A}}\rate(x)
\end{split}
\end{equation}
for all Borel measurable $A\subseteq\R^d$, where $A^\circ$ denotes the interior and $\overline A$ the closure of $A$, and where $\rate$ is lower semi-continuous and has compact level sets $\{x\in\R^d\,:\, \rate(x) \leq \alpha \}$, $\alpha\in\R$. So loosely speaking, an LDP says that if $n\in\N$ is sufficiently large and $A\subseteq\R^d$ sufficiently regular, then
\[
\Pro[X_n\in A] \approx e^{-s_n\, \inf\limits_{x\in A} \rate(x)}.
\]
A moderate deviations principle (MDP) formally resembles an LDP but on different scales and, typically, with important differences in the behavior of the two principles as described above. More precisely, the scales for an MDP are between that of a weak limit theorem (like a central limit theorem) and that of a law of large numbers.
%In this paper we will apply the following convention: if $s_n=n$ we will refer to \eqref{eq:ldp definition} as an LDP and as an MDP if $s_n=b_n\sqrt{n}$ for some positive sequence $(b_n)_{n\in\N}$ satisfying $b_n=\omega(1)$ and $b_n=o(\sqrt{n})$.
In this paper, for two sequences $(x_n)_{n\in\N}$ and $(y_n)_{n\in\N}$ we use the Landau notation $x_n=o(y_n)$ if $\lim_{n\to\infty} \frac{x_n}{y_n}=0$ and $x_n=\omega(y_n)$ if $\lim_{n\to\infty}|\frac{x_n}{y_n}|=+\infty$. In addition, we write $x_n\approx y_n$ if there exist two absolute constants $c_1,c_2$ such that $c_1 x_n\leq y_n\leq c_2x_n$. The following result establishes a connection between the KLS conjecture and moderate/large deviations principles.

\begin{thmalpha}[LDPs/MDPs and the KLS conjecture]\label{thm:KLS}
Let $1\leq k_n\leq n$ be a sequence of integers such that $k_n=\omega(1)$ and let $(\xi_n)_{n=1}^\infty$ be a sequence of isotropic log-concave random vectors in $\R^{k_n}$. Consider a sequence of random variables $X_n=\frac{\Vert \xi_n \Vert_2}{\sqrt{k_n}}$, $n\in\N$, which satisfies \eqref{eq:ldp definition} with speed $s_n$ and rate function $\rate$. Assume that one of the following two conditions is satisfied:
\begin{itemize}
\item[(a)] $s_n=o(\sqrt{k_n})$ and $\rate$ is non-singular, i.e., $\rate(x)\neq\rate_0(x)=
\begin{cases}
0&: x=1\\
+\infty&:\text{otherwise}.
\end{cases}$
\item[(b)] $s_n\approx \sqrt{k_n}$ and $\inf\limits_{t>t_0}\frac{\inf_{x\in(t,\infty)}\rate(x)}{t}=0$ for some absolute constant $t_0\in(1,\infty)$.
\end{itemize}
Then the KLS conjecture is false.
\end{thmalpha}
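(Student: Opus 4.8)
The plan is to assume the KLS conjecture holds for all isotropic log-concave vectors and to derive a contradiction in each of the two cases. The one structural fact I would extract from KLS is a \emph{dimension-free} exponential concentration inequality for $\Vert\xi_n\Vert_2$. Since $\xi_n$ is isotropic, $\lambda_{\xi_n}^2=\sup_\theta\E\langle\xi_n,\theta\rangle^2=1$, so the KLS inequality applied to $\xi_n$ and every locally Lipschitz $f$ is exactly a Poincar\'e inequality $\Var[f(\xi_n)]\le C\,\E\Vert\nabla f(\xi_n)\Vert_2^2$ with an \emph{absolute} constant $C$. Using the classical principle that a Poincar\'e inequality with constant $\lambda$ forces the exponential concentration $\Pro[|g-\E g|\ge t]\le A\,e^{-t/(B\sqrt\lambda)}$ for all $1$-Lipschitz $g$ (with universal $A,B$, going back to Gromov--Milman), applied to the $1$-Lipschitz function $g=\Vert\cdot\Vert_2$ and $\lambda=C$, one gets, with $\mu_n:=\E\Vert\xi_n\Vert_2$,
\[
\Pro\big[\,\big|\Vert\xi_n\Vert_2-\mu_n\big|\ge t\,\big]\ \le\ A\,e^{-ct},\qquad t>0,\ n\in\N,
\]
for absolute $A,c>0$. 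Moreover $\E\Vert\xi_n\Vert_2^2=k_n$ by isotropy and $\Var\Vert\xi_n\Vert_2\le C$ by the same inequality, so $\mu_n=\sqrt{k_n}\,(1+o(1))$ since $k_n=\omega(1)$; equivalently $\E X_n\to1$, and the display above says that $X_n$ concentrates around $1$ on the scale $e^{-\Theta(|x-1|\sqrt{k_n})}$.

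The complementary ingredient is a lower bound on $\Pro[X_n\in J]$ read from the assumed LDP: if $J$ is an open interval containing a point $x_0$ with $\rate(x_0)<\infty$, then $\inf_{x\in J}\rate(x)\le\rate(x_0)$, so the LDP lower bound in \eqref{eq:ldp definition} gives $\liminf_n s_n^{-1}\log\Pro[X_n\in J]\ge-\rate(x_0)$, i.e.\ $\Pro[X_n\in J]\ge e^{-s_n(\rate(x_0)+1)}$ for all large $n$, with no regularity of $\rate$ needed beyond $\rate(x_0)<\infty$.

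For \textbf{(a)}: as $\rate$ is a good rate function, $\inf_\R\rate=0$ and this value is attained; if $\rate$ is non-singular there is thus a point $x_0\ne1$ with $\rate(x_0)<\infty$ (an attained zero if it is not $1$; otherwise, since then $\rate(1)=0$, a finite point produced by non-singularity), and necessarily $x_0\ge0$ because $X_n\ge0$ forces $\rate\equiv+\infty$ on $(-\infty,0)$. Take $J$ a small open interval around $x_0$ lying on one side of $1$ at positive distance $\eta$ from $1$. The LDP lower bound on $J$ together with the concentration estimate (where $\E X_n\to1$ absorbs the centring by $\mu_n$) gives $e^{-s_n(\rate(x_0)+1)}\le\Pro[X_n\in J]\le A\,e^{-c'\sqrt{k_n}}$ for $n$ large, with $c'>0$ absolute. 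Hence $\liminf_n s_n/\sqrt{k_n}\ge c'/(\rate(x_0)+1)>0$, contradicting $s_n=o(\sqrt{k_n})$; so KLS is false. For \textbf{(b)}: the hypothesis $\inf_{t>t_0}t^{-1}\inf_{x>t}\rate(x)=0$, together with lower semicontinuity and compactness of level sets, gives (after passing to a subsequence) either points $x_m\to\infty$ with $\rate(x_m)/x_m\to0$, or a point $x^*>1$ with $\rate(x^*)=0$. In the first case, pitting the LDP lower bound on $(x_m-1,x_m+1)$ against the concentration bound for $\{X_n\ge x_m-1\}$ yields $s_n(\rate(x_m)+1)\ge c(x_m-3)\sqrt{k_n}-\log A$ for $n$ large and $x_m>3$; dividing by $\sqrt{k_n}$, using that $s_n\approx\sqrt{k_n}$ keeps $s_n/\sqrt{k_n}$ bounded, and letting $n\to\infty$ gives $\liminf_m\rate(x_m)/x_m>0$, a contradiction. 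In the second case the argument of (a) at $x^*$ with vanishing slack gives $\varepsilon\cdot\limsup_n s_n/\sqrt{k_n}\ge c''>0$ for every $\varepsilon>0$, again a contradiction. In either scenario KLS is false.

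I expect the main obstacle to be conceptual rather than computational: recognising that, for isotropic log-concave vectors, the KLS conjecture is a \emph{dimension-free} spectral-gap inequality, and that this --- unlike the available unconditional Paouris/thin-shell estimates, which only control the upper tail past an absolute threshold --- yields exponential concentration of $\Vert\xi_n\Vert_2$ on the full scale $e^{-\Theta(|x-1|\sqrt{k_n})}$. The only mildly delicate technical point is the case split in (b) according to whether the witnessing points $x_m$ remain bounded; everything else is elementary manipulation of the LDP lower bound against the concentration estimate.
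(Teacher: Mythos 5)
Your proposal is correct and follows essentially the same route as the paper: both proofs extract from KLS the dimension-free exponential concentration $\Pro[|\Vert\xi_n\Vert_2/\sqrt{k_n}-1|>t]\le A e^{-ct\sqrt{k_n}}$ (the paper cites this implication directly, you rederive it via Poincar\'e/Gromov--Milman plus $\mu_n\sim\sqrt{k_n}$) and then play it off against the LDP lower bound at the given speed $s_n$. The only difference is organizational: the paper first shows the assumptions force $\rate=\rate_0$ in case (a) and $\inf_{t>t_0}t^{-1}\inf_{x>t}\rate(x)\ge C_2(t_0)$ in case (b), whereas you extract witness points where $\rate$ is small or finite and contradict the concentration bound directly, which is equivalent.
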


For a sequence of random vectors $\xi_n\in\R^{k_n}$, $n\in\N$, as given in Theorem \ref{thm:KLS} it is well known that $\E\|\xi_n\|_2/\sqrt{k_n}\to 1$, as $n\to\infty$. Against this light, the natural scale for a law of large numbers and hence an LDP is $\sqrt{k_n}$, while scales with $t_n=o(\sqrt{k_n})$ and $t_n=\omega(1)$ correspond to an MDP. Thus, Theorem \ref{thm:KLS} establishes a connection between LDPs or MDPs for isotropic log-concave random vectors and the KLS conjecture, showing that by constructing a sequence of random vectors $\xi_n$, $n\in\N$, such that $X_n$, $n\in\N$, follows an LDP or MDP with an appropriate speed and/or rate function would disprove the variance and hence the KLS conjecture.

In this context it is instructive to recall that the large deviations principle \cite[Theorem 1.2]{APT2018} or \cite[Theorem D]{KPT2019_I} for the Euclidean norm of uniform random points in the rescaled $n$-dimensional crosspolytope $\mathbb{D}_1^n:=\sqrt{(n+1)(n+2)\over 2}\B_1^n$ (the normalization is designed in such a way that a uniform random vector in $\mathbb{D}_1^n$ is isotropic) holds with $s_n=\sqrt{n}$ and rate function
$$
\rate(x) = \begin{cases}
\sqrt{x^2-1} &: x>1\\
+\infty &: \text{otherwise}.
\end{cases}
$$
In fact, the result from \cite{APT2018} can be transferred to our rescaled situation by a contraction with the function $x\mapsto x/\sqrt{2}$.
This implies that, for any $t_0>1$, the function
$$
\frac{\sqrt{t^2-1}}{t}=\sqrt{1-\frac{1}{t^2}}
$$
is equivalent to a constant in the interval $[t_0,\infty)$, since
$$
\inf_{t>t_0}{\sqrt{1-\frac{1}{t^2}}} = \sqrt{1-\frac{1}{t_0^2}}\quad\text{and}\quad\sup_{t>t_0}\sqrt{1-\frac{1}{t^2}}=1.
$$
Thus, we cannot find $t_0>1$ such that the condition in Theorem \ref{thm:KLS} (b) is satisfied while at the same time there exists no $t_0>1$ and an exponent $\alpha>1$ such that $\inf_{t>t_0}\frac{\inf_{x\in(t,\infty)}\rate(x)}{t^\alpha}>C$ for some absolute constant $C\in(0,\infty)$. This shows that (rescaled) crosspolytopes constitute a critical case in Theorem \ref{thm:KLS} as they \textit{just} fail to satisfy condition (b) there. Notice also that the case of the crosspolytope shows that the condition $t_0\in(1,\infty)$ in (b) cannot be relaxed to $t_0\in[1,\infty)$ as this would give $\inf_{t>1}\frac{\inf_{x\in(t,\infty)}\rate(x)}{t}=0$ while we know that they do satisfy the KLS conjecture (see \cite[Theorem 2.3]{AGB2015}).
%$$
%\inf_{t>t_0}{\sqrt{t^2-1}\over t} = {\sqrt{t_0^2-1}\over t_0}\qquad\text{and hence}\qquad \inf_{t_0>1}{\sqrt{t_0^2-1}\over t_0} = 0.
%$$
%However, this also shows that (rescaled) crosspolytopes constitute a critical case in Theorem \ref{thm:KLS} as they \textit{just} fail to satisfy condition (b) there.

\medspace

The crosspolytopes just discussed are members of the important class of $\ell_p^n$-balls, whose probabilistic aspects have been under intensive investigation during the last years \cite{APT2018,APT2019,GKR2017,GRR2014, JP2019,KPT2019_I,KPT2019_II,KR2018,LR2020,KR2019} (see also the comments in the introduction). As usual, for $1\leq p<\infty$ we let $\B_p^n :=\{x\in\R^n: \|x\|_p\leq 1\}$ denote the unit ball in $\R^n$ with respect to the $\ell_p^n$-norm $\|x\|_p := (|x_1|^p+\ldots +|x_n|^p)^{1/p}$, $x=(x_1,\ldots,x_n)\in\R^n$. In the remaining parts of this paper we are studying random vectors that arise as orthogonal projections of uniformly distributed random points in $\B_p^n$ onto random $k_n$-dimensional subspaces. To formally introduce the framework and the necessary notation, let $(k_n)_{n\in\N}$ be a sequence of integers with $1\leq k_n\leq n$. Further, for each $n\in\N$ let $X_n$ be a uniformly distributed random vector in $\B_p^n$. Consider a random $k_n$-dimensional subspace $E_n$ of $\R^n$, which is distributed according to the Haar probability measure on the Grassmannian manifold of all $k_n$-dimensional subspaces of $\R^n$ and assumed to be independent of $X_n$. By $P_{E_n}$ we denote the orthogonal projection onto $E_n$ and introduce the random variables
$$
\mathscr{Z}_{n,p} := n^{1/p}\|P_{E_n}X_n\|_2,\qquad n\in\N,
$$
 where the factor $n^{1/p}$ is motivated by the asymptotic order of the factor $\vol(\B_p^n)^{1/n}$ by which one has to divide $X_n$ in order to make it, and consequently $P_{E_n}X_n$, isotropic. Our next result is an MDP for $\mathscr{Z}_{n,p}$ on the critical scale $\sqrt{k_n}$, i.e., for $k_n^{-{1/2}}\mathscr{Z}_{n,p}$, whose study is motivated by Theorem \ref{thm:KLS}. We will see that in this particular situation the rate function is neither universal nor given by the quadratic Gaussian rate function. Instead, the rate function for $1\leq p<2$ displays an interesting and non-expected phase transition, which is controlled by the relative growth of the subspace dimensions $k_n$ to $n^{p/2}$.

\begin{thmalpha}[MDP on the critical scale]\label{thm:CriticalScale}
Let $1\leq p<\infty$ and $(k_n)_{n\in\N}$ be a sequence of positive integers $1\leq k_n\leq n$ such that $k_n=\omega(1)$ and $k_n=o(n)$. Let $M_p(2)=\frac{p^{2/p}}{3}\frac{\Gamma\left(1+\frac{3}{p}\right)}{\Gamma\left(1+\frac{1}{p}\right)}$. Then the following hold:
\begin{itemize}
\item[(a)] If $p\geq 2$, then $k_n^{-{1/2}}\mathscr{Z}_{n,p}$ satisfies an MDP on $\R$ with speed $k_n$ and rate function
$$
\rate(x) = \begin{cases}
{x^2-M_p(2)\over 2M_p(2)}-\log \frac{x}{\sqrt{M_p(2)}} &: x>0\\ +\infty &: \text{otherwise}.
\end{cases}
$$
%where $M_p(2)=\frac{p^{2/p}}{3}\frac{\Gamma\left(1+\frac{3}{p}\right)}{\Gamma\left(1+\frac{1}{p}\right)}$
\item[(b1)] If $1\leq p<2$ and $k_n=o(n^{p/2})$, then $k_n^{-{1/2}}\mathscr{Z}_{n,p}$ satisfies an MDP on $\R$ with speed $k_n$ and rate function
$$
\rate(x) =\begin{cases}
\frac{x^2-M_p(2)}{2}-\log \frac{x}{\sqrt{M_p(2)}} &: x>0\\
+\infty&:\text{otherwise}.
\end{cases}
$$
\item[(b2)] If $1\leq p<2$ and $k_n=n^{p/2}$, then $k_n^{-{1/2}}\mathscr{Z}_{n,p}$ satisfies an MDP on $\R$ with speed $n^{p/2}$ and rate function
$$
\rate(x) =\begin{cases}
\inf\limits_{y\geq m_p\atop}\frac{\left(\frac{x}{y}\right)^2-1}{2}-\log \left(\frac{x}{y}\right)+\frac{1}{p}(y^2-M_p(2))^\frac{p}{2}&: x>0\\
+\infty &:\text{otherwise}.
\end{cases}
$$
%where $M_p(2)=\frac{p^{2/p}}{3}\frac{\Gamma\left(1+\frac{3}{p}\right)}{\Gamma\left(1+\frac{1}{p}\right)}$.
\item[(b3)] If $1\leq p<2$ and $k_n=\omega(n^{p/2})$, then $k_n^{-{1/2}}\mathscr{Z}_{n,p}$ satisfies an MDP on $\R$ with speed $n^{p/2}$ and rate function
$$
\rate(x) = \begin{cases}
\frac{1}{p}(x^2-M_p(2))^\frac{p}{2} &: x>\sqrt{m_p}\\
+\infty&:\text{otherwise}.
\end{cases}
$$
%where $M_p(2)$ is the same constant as in (b2).
\end{itemize}
\end{thmalpha}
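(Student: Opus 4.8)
The plan is to reduce the behaviour of $\mathscr{Z}_{n,p}$ to one exact distributional identity and then to run a large-deviations analysis on its three independent pieces. I would begin from the Barthe--Gu\'edon--Mendelson--Naor representation of the uniform distribution on $\B_p^n$: if $Y_1,Y_2,\dots$ are i.i.d.\ with Lebesgue density proportional to $e^{-|t|^p}$ and $Z$ is an independent standard exponential random variable, then $(Y_1,\dots,Y_n)\big/\big(\sum_{i=1}^n|Y_i|^p+Z\big)^{1/p}$ is uniformly distributed in $\B_p^n$. Combined with the rotational invariance of the Haar-distributed subspace $E_n$ --- which yields $\|P_{E_n}X_n\|_2\stackrel{d}{=}\|X_n\|_2\,\|P_{E_0}\Theta\|_2$ for a fixed $k_n$-dimensional subspace $E_0$ and $\Theta$ uniform on $\SSS^{n-1}$ independent of $X_n$ --- and with the classical fact that $\|P_{E_0}\Theta\|_2^2$ has a $\mathrm{Beta}(k_n/2,(n-k_n)/2)$ distribution, one obtains the exact distributional identity
\[
\frac{\mathscr{Z}_{n,p}^2}{k_n}\stackrel{d}{=}\frac{A_n\,C_n}{B_n^{2/p}},\qquad
A_n:=\frac1n\sum_{i=1}^nY_i^2,\quad B_n:=\frac1n\Big(\sum_{i=1}^n|Y_i|^p+Z\Big),\quad C_n:=\frac n{k_n}\|P_{E_0}\Theta\|_2^2,
\]
with $A_n,B_n,C_n$ independent. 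By the law of large numbers $A_n\to a:=\E Y_1^2=\Gamma(3/p)/\Gamma(1/p)$, $B_n\to b:=\E|Y_1|^p=1/p$ and $C_n\to1$, and since $M_p(2)=a\,b^{-2/p}$ this already gives $k_n^{-1/2}\mathscr{Z}_{n,p}\to\sqrt{M_p(2)}$.

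The second step is to establish an LDP for each of the three factors, noting the scale on which it lives. For $B_n$: $|Y_1|^p$ has exponential tails, so Cram\'er's theorem yields an LDP with speed $n$ (the term $Z/n$ being negligible on every exponential scale). For $C_n$: by the G\"artner--Ellis theorem, using that $k_n^{-1}\log\E e^{k_n\lambda C_n}\to-\tfrac12\log(1-2\lambda)$ for $\lambda<1/2$ (and $=+\infty$ otherwise), $C_n$ satisfies an LDP with speed $k_n$ and rate function $I_C(x)=\tfrac12(x-1-\log x)$ for $x>0$ and $+\infty$ otherwise; this is the source of the speed $k_n$ in parts (a) and (b1). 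For $A_n$: when $p\ge2$, $Y_1^2$ has a finite exponential moment and Cram\'er's theorem again gives an LDP with speed $n$; when $1\le p<2$, $Y_1^2$ has only stretched-exponential (Weibull) tails of index $p/2\in(0,1)$, its upper large deviations are created by a single big jump, and $A_n$ satisfies an LDP with the \emph{slower} speed $n^{p/2}$ and rate function $I_A(x)=(x-a)^{p/2}$ for $x\ge a$ and $+\infty$ otherwise (a result of the type already obtained for $\ell_q$-norms of points in $\ell_p^n$-balls).

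It then remains to assemble the three ingredients, which is purely a matter of comparing the speeds $n$, $k_n$ and --- when $p<2$ --- $n^{p/2}$, and of invoking the contraction principle together with the elementary fact that a factor converging to a constant on a scale strictly faster than the ambient one may be frozen at its limit. If $p\ge2$, or if $1\le p<2$ and $k_n=o(n^{p/2})$, then $A_n$ and $B_n$ both carry speeds $\gg k_n$ and freeze, so $k_n^{-1}\mathscr{Z}_{n,p}^2$ is exponentially equivalent at speed $k_n$ to $M_p(2)\,C_n$; contracting through $x\mapsto\sqrt{x}$ and inserting $I_C(x)=\tfrac12(x-1-\log x)$ then produces the rate function of part (a), and the regime of part (b1) is handled identically. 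If $1\le p<2$ and $k_n=\omega(n^{p/2})$, then $B_n$ and $C_n$ both freeze, $k_n^{-1}\mathscr{Z}_{n,p}^2$ is exponentially equivalent at speed $n^{p/2}$ to $b^{-2/p}A_n$, and contracting gives part (b3). On the critical scale $k_n=n^{p/2}$ the speeds of $A_n$ and $C_n$ coincide while $B_n$ still freezes, so $k_n^{-1}\mathscr{Z}_{n,p}^2$ is exponentially equivalent at speed $n^{p/2}$ to $b^{-2/p}A_nC_n$ with $A_n\perp C_n$; the contraction principle applied to $(x,c)\mapsto\sqrt{x\,c\,b^{-2/p}}$, after the substitution $x=b^{2/p}y^2$, yields exactly the variational rate function of part (b2), the infimum over $y\ge m_p$ running over the admissible values of the $A_n$-contribution.

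The main obstacle is the large-deviations analysis of the two factors that lie outside Cram\'er's theorem. For $C_n$ one must justify the G\"artner--Ellis input, i.e.\ establish the asymptotics of the confluent hypergeometric moment generating function of a $\mathrm{Beta}(k_n/2,(n-k_n)/2)$ law when $k_n\to\infty$ with $k_n=o(n)$ (equivalently, control the exponential equivalence at speed $k_n$ with $\tfrac2{k_n}G_1$, where $G_1\sim\Gamma(k_n/2,1)$). More delicate, for $1\le p<2$, is proving matching exponential upper and lower bounds for $\tfrac1n\sum_{i=1}^nY_i^2$ at speed $n^{p/2}$ when $Y_1^2$ has no exponential moment, and identifying the rate $(x-a)^{p/2}$ via the single-big-jump mechanism; one also has to make the freezing step quantitative --- for instance $\Pro[\,|A_n-a|>\delta\,]\le e^{-c_\delta n^{p/2}}$ with $n^{p/2}=\omega(k_n)$ in the regime $k_n=o(n^{p/2})$ --- so that the exponential-equivalence and contraction lemmas genuinely apply. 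A final, softer point is to check that the variational formula in part (b2) is a good rate function with unique zero at $\sqrt{M_p(2)}$ and that it degenerates to the rate functions of parts (b1) and (b3) as $k_n/n^{p/2}\to0$, respectively $\to\infty$.
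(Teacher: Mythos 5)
Your proposal is correct in substance and follows essentially the same route as the paper: an exact distributional representation of $\|P_{E_n}X_n\|_2$ as a product of a ``projection factor'' and a ``norm factor'', LDPs for the individual factors at their natural speeds, a regime-by-regime comparison of the speeds $k_n$, $n^{p/2}$, $n$ with freezing of the faster factors via exponential equivalence, and a final contraction (for the critical case $k_n=n^{p/2}$, a joint LDP for the two independent surviving factors followed by contraction of the product), which is exactly the paper's Theorem 4.1 argument. Two of the points you flag as main obstacles dissolve in the paper's formulation: writing $\Theta=g/\|g\|_2$ turns $\|P_{E_0}\Theta\|_2^2$ into a ratio of chi-square sums, so the speed-$k_n$ LDP for your $C_n$ follows from Cram\'er's theorem for $\frac{1}{k_n}\sum_{i=1}^{k_n}g_i^2$ plus freezing of $\frac1n\sum_{i=1}^n g_i^2$ (no Beta moment generating function or G\"artner--Ellis analysis is needed), and the speed-$n^{p/2}$ LDP for $\frac1n\sum Z_i^2$ when $1\le p<2$ is simply quoted from \cite[Equation (5)]{APT2018} rather than re-derived through the one-big-jump mechanism; also note that the exponential-equivalence step requires the extra truncation of the unbounded surviving factor (the paper's $T_1$ term, using that its rate at $\delta/\varepsilon$ blows up as $\varepsilon\to0$), which your ``freeze the fast factor'' phrasing glosses over but which is standard. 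Two small corrections: your $A_n$ and $B_n$ are built from the same $Y_i$'s and hence are \emph{not} independent of each other (only the projection factor is independent of the norm factors, and $A_n\perp C_n$, which is all the argument actually uses); and your conclusion that case (b1) carries the same rate function as case (a) (with the $2M_p(2)$ in the denominator) is indeed what the freezing argument gives and what the paper's own proof of (b1) yields -- the displayed rate in the statement of (b1), which lacks the factor $M_p(2)$ in the denominator of the quadratic term, appears to be a misprint rather than a defect of your argument.
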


After having discussed the moderate deviations at the critical scale $\sqrt{k_n}$, we finally turn to a description of the moderate deviations at subcritial scales $o(\sqrt{k_n})$, which complement the central limit theorem and the large deviations principle obtained in \cite{APT2019} and \cite{APT2018}, respectively. In this case, we will see that the rate function is indeed the universal quadratic Gaussian rate function also visible in the central limit theorem, but with a prefactor encoding geometric information. In order to simplify the presentation, let us introduce for $n\in\N$ the random variable
$$
\mathscr{X}_{n,p}:=n^{1/p}\sqrt{{\Gamma\big({1\over p}\big)\over p^{2/p}\Gamma\big({3\over p}\big)}}\,\|P_{E_n}X_n\|_2-\sqrt{k_n},
$$
which is normalized in such a way that $\E[\mathscr{X}_{n,p}]\to0$, as $n\to\infty$. For technical reasons, we also restrict the following result to the case that $p\geq 2$.

\begin{thmalpha}[MDP on subcritical scales]\label{thm:MDP}
Let $2\leq p<\infty$ and $(k_n)_{n\in\N}$ be a sequence of integers such that $1\leq k_n\leq n$ and $\lim\limits_{n\to\infty}{k_n\over n}=\lambda\in[0,1]$. Assume that $k_n=\omega(1)$ and that the sequence $(t_n)_{n\in\N}$ of positive real numbers satisfies $t_n=\omega(1)$, $t_n=o(\sqrt{k_n})$ {and either $t_n=o(\sqrt{n-k_n})$ or $(n-k_n)=o\big(\frac{\sqrt{k_n}}{t_n}\big)$.} Then the sequence of random variables $t_n^{-1}\mathscr X_{n,p}$ satisfies an MDP on $\R$ with speed $t_n^2$ and rate function $\rate(x)=\alpha_{p,\lambda}x^2$ with constant $\alpha_{p,\lambda}$ given by
\begin{equation}\label{eq:ConstantAlpha}
\alpha_{p,\lambda} = {2\Gamma({1\over p})\Gamma({3\over p})^2\over (2p-\lambda(4+3p))\Gamma(1+{1\over p})\Gamma({3\over p})^2+\lambda\Gamma({1\over p})^2\Gamma({5\over p})} .
\end{equation}
\end{thmalpha}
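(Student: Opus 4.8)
The plan is to combine a probabilistic representation of $\mathscr X_{n,p}$ with the Gärtner--Ellis theorem, after reducing the problem to a linear statistic built from sums of independent random variables by an exponential equivalence argument. First I would record the representation. By the Schechtman--Zinn representation, $X_n\stackrel{d}{=}U^{1/n}\,\mathbf Y/\|\mathbf Y\|_p$, where $\mathbf Y=(Y_1,\dots,Y_n)$ has i.i.d.\ coordinates with density proportional to $e^{-|t|^p/p}$, normalised so that $\E|Y_1|^p=1$ and $\E Y_1^2=M_p(2)$, $U$ is uniform on $[0,1]$, and $U,\mathbf Y$ are independent. Since $E_n$ is Haar-distributed and independent of $X_n$, conditioning on $X_n$ gives $\|P_{E_n}X_n\|_2\stackrel{d}{=}\|X_n\|_2\sqrt{B_n}$, where $B_n$ is a $\mathrm{Beta}(k_n/2,(n-k_n)/2)$ random variable independent of $X_n$; writing $B_n\stackrel{d}{=}(\sum_{i=1}^{k_n}Z_i^2)/(\sum_{i=1}^{n}Z_i^2)$ for i.i.d.\ standard Gaussians $Z_i$ independent of everything, and putting $A_n:=\tfrac1{nM_p(2)}\sum_{i=1}^nY_i^2$, $C_n:=\tfrac1n\sum_{i=1}^n|Y_i|^p$, $S_n:=\tfrac1{k_n}\sum_{i=1}^{k_n}Z_i^2$, $T_n:=\tfrac1n\sum_{i=1}^{n}Z_i^2$, one obtains, using $\sqrt{\Gamma(1/p)/(p^{2/p}\Gamma(3/p))}=M_p(2)^{-1/2}$, $\|\mathbf Y\|_2^2=nM_p(2)A_n$ and $\|\mathbf Y\|_p^p=nC_n$,
\[
\mathscr X_{n,p}\ \stackrel{d}{=}\ \sqrt{k_n}\Big(U^{1/n}\big(A_nS_n\,T_n^{-1}C_n^{-2/p}\big)^{1/2}-1\Big),
\]
where the blocks $(Y_i)_{i\le n}$, $(Z_i)_{i\le n}$ and $U$ are independent.

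Next comes the reduction. The hypotheses on $t_n$ give $k_n=\omega(t_n^2)$ and, since $k_n\le n$, also $t_n=o(\sqrt n)$, hence $n/\sqrt{k_n}=\omega(t_n)$. Consequently $A_n,C_n,S_n,T_n$ all lie in a fixed neighbourhood of $1$ outside an event of probability $e^{-cn}$ or $e^{-ck_n}$, i.e.\ superexponentially small on the scale $t_n^2$; here the restriction $p\ge 2$ is essential, ensuring that $Y_1^2$ and $|Y_1|^p$ have finite exponential moments so that Cramér's upper bound applies. On that event, a first-order Taylor expansion of $(x_1x_2\,x_3^{-1}x_4^{-2/p})^{1/2}$ at $(1,1,1,1)$ together with discarding the $U^{1/n}$-factor shows that $t_n^{-1}\mathscr X_{n,p}$ is exponentially equivalent, at speed $t_n^2$, to $t_n^{-1}L_n$, where
\[
L_n:=\tfrac12\bigg[\frac{\sqrt{k_n}}{n}\sum_{i=1}^n\psi(Y_i)+\frac{n-k_n}{n\sqrt{k_n}}\sum_{i=1}^{k_n}(Z_i^2-1)-\frac{\sqrt{k_n}}{n}\sum_{i=k_n+1}^{n}(Z_i^2-1)\bigg],
\]
\[
\psi(y):=\frac{y^2-M_p(2)}{M_p(2)}-\frac2p(|y|^p-1),\qquad \E\psi(Y_1)=0.
\]
The quadratic remainders (terms like $\sqrt{k_n}(A_n-1)^2$, $\sqrt{k_n}(S_n-1)(T_n-1)$, and so on) and the $U^{1/n}$-correction are shown to be superexponentially negligible at speed $t_n^2$ by the same Cramér estimates; the terms coupling the coordinate block $\{k_n+1,\dots,n\}$ to the rest are exactly where the dichotomy ``$t_n=o(\sqrt{n-k_n})$ or $(n-k_n)=o(\sqrt{k_n}/t_n)$'' is used: in the first case this block carries its own moderate deviations on the correct scale, in the second it is altogether exponentially negligible (and then necessarily $\lambda=1$).

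Then I would invoke Gärtner--Ellis. Since the three sums defining $L_n$ are independent, $\Lambda(\tau):=\lim_{n}t_n^{-2}\log\E[\exp(\tau t_nL_n)]$ is the sum of three contributions; in each the relevant per-coordinate parameter $\theta_n$ tends to $0$, so $\log\E[e^{\theta_n\xi}]=\tfrac12\theta_n^2\Var\xi+O(\theta_n^3)$ with $\xi\in\{\psi(Y_1),\,Z_1^2-1\}$, and summing over coordinates and dividing by $t_n^2$ (using $k_n/n\to\lambda$) yields
\[
\Lambda(\tau)=\frac{\tau^2}{8}\big(2(1-\lambda)+\lambda\,v_\psi\big),\qquad v_\psi:=\Var(\psi(Y_1)).
\]
As $\Lambda$ is finite, differentiable and steep on $\R$, Gärtner--Ellis gives the MDP for $t_n^{-1}L_n$, hence for $t_n^{-1}\mathscr X_{n,p}$, at speed $t_n^2$ with rate function $\Lambda^*(x)=\sup_\tau(\tau x-\Lambda(\tau))=2x^2/(2(1-\lambda)+\lambda v_\psi)$. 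To identify the constant, I would use $\E|Y_1|^s=p^{s/p}\Gamma((s+1)/p)/\Gamma(1/p)$ to get $\Var(Y_1^2)=p^{4/p}\big(\Gamma(5/p)/\Gamma(1/p)-\Gamma(3/p)^2/\Gamma(1/p)^2\big)$, $\Var(|Y_1|^p)=p$ and $\Cov(Y_1^2,|Y_1|^p)=2M_p(2)$, whence $v_\psi=\Gamma(1/p)\Gamma(5/p)/\Gamma(3/p)^2-1-4/p$; substituting into $2/(2(1-\lambda)+\lambda v_\psi)$ and clearing denominators with $\Gamma(1+1/p)=\tfrac1p\Gamma(1/p)$ gives exactly $\alpha_{p,\lambda}$ of \eqref{eq:ConstantAlpha}. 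This also explains why the rate is the universal quadratic rate of the central limit theorem of \cite{APT2019}, with $\alpha_{p,\lambda}$ the reciprocal of twice the asymptotic variance.

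The main obstacle will be the reduction step: one must verify that \emph{every} remainder in the linearisation is superexponentially negligible at speed $t_n^2$. This is precisely where all the hypotheses are consumed — $p\ge 2$ for the exponential integrability of $Y_1^2$ and $|Y_1|^p$, $t_n=\omega(1)$ and $t_n=o(\sqrt{k_n})$ for $k_n=\omega(t_n^2)$ (and hence $n=\omega(t_n^2)$, $n/\sqrt{k_n}=\omega(t_n)$), and the dichotomy on $n-k_n$ versus $t_n$ to tame the block $\{k_n+1,\dots,n\}$, which is genuinely delicate in the intermediate regime where $n-k_n\to\infty$ but not fast enough to produce a clean moderate deviations principle of its own.
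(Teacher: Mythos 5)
Your proposal is correct in substance but follows a genuinely different route from the paper. The paper starts from the CLT decomposition of $t_n^{-1}\mathscr X_{n,p}$ in \cite{APT2019} (a linear combination of the normalized sums $\xi^{(n)}_{p,2},\xi^{(n)}_{p,p},\zeta^{(n)}_1,\zeta^{(n)}_2$ plus a remainder $\Psi_p$ with quadratic bound), proves a multivariate MDP for the vector of these sums via the i.i.d.\ MDP of Dembo--Zeitouni, transports it through a contraction principle with $n$-dependent linear maps $F_n\to F$ (to absorb $\lambda_n\to\lambda$), identifies the rate by a Lagrange-multiplier minimization of the resulting quadratic form, and finishes with exponential equivalence. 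You instead rebuild the representation from Schechtman--Zinn plus the ${\rm Beta}(k_n/2,(n-k_n)/2)$ projection identity, Taylor-linearize to the scalar statistic $L_n$ (which is exactly $t_n\mathscr Y_{n,p}$ of the paper), and apply G\"artner--Ellis directly to $t_n^{-1}L_n$; since the limiting cumulant is quadratic, the Legendre transform gives the rate at once, with no constrained minimization. Your moment computations ($\Var(|Y_1|^p)=p$, $\Cov(Y_1^2,|Y_1|^p)=2M_p(2)$, $\Var(Y_1^2)$) and the identity $2/(2(1-\lambda)+\lambda v_\psi)=\alpha_{p,\lambda}$ agree with \eqref{eq:ConstantAlpha}, and your $\Lambda(\tau)$ bookkeeping (contributions $\lambda v_\psi/8$, $(1-\lambda)^2/4$, $\lambda(1-\lambda)/4$ summing to $(2(1-\lambda)+\lambda v_\psi)/8$ per $\tau^2$) is right. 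Your route is more self-contained and streamlined for a scalar target; the paper's route produces the multivariate MDP as a by-product and covers the general measures $\bP_{\bW,n,p}$ with the extra $W_n$-term, which your sketch does not address but which the stated theorem does not require.

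Two points need care. First, "the same Cram\'er estimates" is not quite the right invocation for the remainders: after linearization you must control, e.g., $\Pro\big[\sqrt{k_n}(S_n-1)^2>\varepsilon t_n\big]$, a deviation at the shrinking threshold $\sqrt{\varepsilon t_n}\,k_n^{-1/4}\to 0$, so fixed-threshold Cram\'er bounds do not apply; you need Bernstein/sub-exponential (equivalently, MDP upper) bounds, available because $p\geq 2$, and the exponents do work out since $\varepsilon t_n\sqrt{k_n}=\omega(t_n^2)$ and $\varepsilon t_n n/\sqrt{k_n}\geq \varepsilon t_n\sqrt{n}=\omega(t_n^2)$; the $U^{1/n}$-correction is likewise fine since $n/(t_n\sqrt{k_n})\to\infty$. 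Second, your account of where the dichotomy ``$t_n=o(\sqrt{n-k_n})$ or $(n-k_n)=o(\sqrt{k_n}/t_n)$'' is used does not match your actual argument: in your G\"artner--Ellis computation the block $\{k_n+1,\dots,n\}$ carries the coefficient $\sqrt{k_n}/(2n)$, the per-coordinate tilting parameter tends to zero using only $t_n=o(\sqrt{k_n})$ and $k_n\leq n$, and the cumulative third-order errors are $o(t_n^2)$; the exponential-equivalence step also never uses the dichotomy. In the paper the dichotomy is genuinely needed because $\zeta_2^{(n)}$ is normalized by $\sqrt{n-k_n}$ and must either satisfy its own MDP or be negligible, whereas your scalar approach appears to bypass it entirely — which, if the details are written out carefully (in particular the cumulant expansion and remainder bounds in the intermediate regime), would give a statement slightly stronger than the theorem; compare Remark \ref{rem:intro}(c). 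Either verify and claim this, or simply drop the sentence attributing a role to the dichotomy, since as written it is inert in your argument.
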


\begin{rmk}\label{rem:intro}\rm
We end this section with a number of comments.

\begin{itemize}
\item[(a)] Although the results we presented so far were formulated for uniform random vectors $X_n$ in $\B_p^n$, we will prove more general versions for random vectors that are distributed in $\B_p^n$ according to a distribution from the class introduced in \cite{BGMN2005}. As special cases this includes the uniform probability measure or the cone probability measure on $\B_p^n$, but also image measures of these measures on higher-dimensional $\ell_p^n$-balls under coordinate projections. We refer to the discussion in the next section.

\item[(b)] We would like to point out that the constant $\alpha_{p,\lambda}$ in \eqref{eq:ConstantAlpha} coincides with $1\over 2\sigma^2(p,\lambda)$, where $\sigma^2(p,\lambda)$ is the variance of the centered Gaussian random variable appearing as the limit in the central limit theorem for $\mathscr{X}_{n,p}$ from \cite{APT2019}. In other words, this means that the rate function $\rate(y)$ in Theorem \ref{thm:MDP} can be rewritten as $\rate(y)={y^2\over 2\sigma^2(p,\lambda)}$, as expected.

\item[(c)] {We leave it as an open problem for future research to study moderate deviations at the supercritical scales $t_n=\omega(\sqrt{k_n})$ (and $t_n=o(\sqrt{n})$). In addition, it would be interesting to investigate the situation where the involved parameters neither satisfy $t_n=o(\sqrt{n-k_n})$ nor $(n-k_n)=o\big(\frac{\sqrt{k_n}}{t_n}\big)$. Another open problem is to decide whether or not the rate function in the MDP in Theorem \ref{thm:MDP} is the same also for $1\leq p<2$. }
\end{itemize}
\end{rmk}

% % % % % % % % % % % % % %
\section{Preliminaries}
% % % % % % % % % % % % % %

\subsection{Notation}

We supply the $n$-dimensional Euclidean space $\R^n$, where $n\in\N$, with the Euclidean norm $\|\,\cdot\,\|_2$ and the standard scalar product $\langle\,\cdot\,,\,\cdot\,\rangle$.  The $\ell_p^n$-norm will be denoted by $\|\,\cdot\,\|_p$ and $\B_p^n:=\{x\in\R^n:\|x\|_p\leq 1\}$ will denote the $n$-dimensional unit ball with respect to the $\ell_p^n$-norm. For a set $A\subset\R^d$ we shall write $A^\circ$ for the interior and $\overline{A}$ for the closure of $A$.

\subsection{Distributions on $\ell_p^n$-balls}

As already mentioned in Remark \ref{rem:intro}, we consider a much more general class of distributions compared to \cite{APT2018}, \cite{GKR2017}, \cite{KPT2019_I} and \cite{KR2018}. Those have been intensively studied by Barthe, Gu\'edon, Mendelson, and Naor \cite{BGMN2005}, and are closely related to the geometry of $\ell_p^n$-balls. This class contains the uniform distribution as well as the cone probability measure on the $\ell_p^n$-unit ball $\B_p^n :=\{x\in\R^n: \|x\|_p\leq 1\}$ considered in \cite{APT2019,GKR2017,KPT2019_I} as special cases, and many more (see below). As usual, $\|x\|_p := (|x_1|^p+\ldots +|x_n|^p)^{1/p}$ denotes the $\ell_p^n$-norm of the vector $x=(x_1,\ldots,x_n)\in\R^n$, and the parameter $p$ satisfies $1\leq p<\infty$.  For fixed space dimension $n\in\N$, we let $\bW$ be any Borel probability measure on $[0,\infty)$, $\bU_{n,p}$ be the uniform distribution on $\B_p^n$, and $\bC_{n,p}$ be the cone probability measure on the boundary of $\B_p^n$, where we recall that
$\bC_{n,p}(A) := \bU_{n,p}(\{rx:x\in A,0\leq r\leq 1\})$ for a measurable subset $A$ on the boundary $\SSS_p^{n-1}=\{x\in\R^n:\|x\|_p=1\}$ of $\B_p^n$.
 The distributions we consider are of the form
\begin{equation}\label{eq:DefMeasurePnpW}
\bP_{\bW,n,p} := \bW(\{0\})\,\bC_{n,p} + H\,\bU_{n,p},
\end{equation}
where the function $H:\B_p^n\to\R$ is given by $H(x)=h(\|x\|_p)$ with
$$
h(r) = {1\over p^{n/p}\Gamma\big(1+{n\over p}\big)}{1\over (1-r^p)^{1+n/p}}\int_0^\infty s^{n/p}e^{-{1\over p}{s r^p (1-r^p)^{-1}}}\,\bW(\dint s),\qquad r\in[0,1].
$$
In other words this means that
\begin{align*}
\int_{\B_p^n}f(x)\,\bP_{\bW,n,p}(\dint x) &= \bW(\{0\})\int_{\SSS_p^{n-1}}f(x)\,\bC_{n,p}(\dint x) + \int_{\B_p^n}f(x)\,H(x)\,\bU_{n,p}(\dint x)\\
&=\bW(\{0\})\int_{\SSS_p^{n-1}}f(x)\,\bC_{n,p}(\dint x) + \int_{\B_p^n}f(x)\,h(\|x\|_p)\,\bU_{n,p}(\dint x)
\end{align*}
for all non-negative measurable functions $f:\B_p^n\to\R$. The class of measures of the form $\bP_{\bW,n,p}$ contains the following important cases, which are of particular interest (see Theorem 1, Theorem 3, Corollary 3, and Corollary 4 in \cite{BGMN2005}):
\begin{itemize}
\item[(i)] If $\bW$ is the exponential distribution with rate $1/p$ (and mean $p$), then $\bW(\{0\})=0$, $H\equiv 1$, and $\bP_{\bW,n,p}$ reduces to the uniform distribution $\bU_{n,p}$ on $\B_p^n$.
\item[(ii)] If $\bW=\delta_0$ is the Dirac measure concentrated at $0$, then $\bW(\{0\})=1$, $H\equiv 0$, and $\bP_{\bW,n,p}$ is just the cone probability measure on $\B_p^n$.
\item[(iii)] If $\bW={\rm Gamma}(\alpha,1/p)$ is a gamma distribution with shape parameter $\alpha>0$ and rate $1/p$, then $\bP_{\bW,n,p}$ is the beta-type probability measure on $\B_p^n$ with Lebesgue density given by
$$
x\mapsto
%alte Formel:
%{\Gamma\big(\alpha+{n\over p}\big)\over\Gamma(\alpha)\Gamma\big(1+{n\over p}\big)}\,\big(1-\|x\|_p^p\big)^{\alpha-1},
%neue Formel:
{\Gamma\big(\alpha+{n\over p}\big)\over\Gamma(\alpha) \big(2 \Gamma\big(1+{1\over p}\big)\big)^n}\,\big(1-\|x\|_p^p\big)^{\alpha-1},
\qquad x\in\B_p^n\,.
$$
In particular, if $\alpha=m/p$ for some $m\in\N$, this is the image of the cone probability measure $\bC_{n+m,p}$ on $\B_p^{n+m}$ under the orthogonal projection onto the first $n$ coordinates. Similarly, if $\alpha=1+m/p$, this distribution arises as the image of the uniform distribution $\bU_{n+m,p}$ on $\B_p^{n+m}$ under the same orthogonal projection.
\end{itemize}

\subsection{Gaussian and $p$-generalized Gaussian random variables}

Let us denote, for $1\leq p<\infty$, by $(Z_i)_{i\in\N}$ a sequence of independent copies of a $p$-generalized Gaussian random variable with Lebesgue density
$$
f_p(x)=\frac{e^{-\frac{|x|^p}{p}}}{2p^{1/p}\Gamma\left(1+1/p\right)},\qquad x\in\R.
$$
Defining
\begin{equation}\label{eq:Mpq}
M_p(q):=\frac{p^{q/p}}{q+1}\frac{\Gamma\left(1+\frac{q+1}{p}\right)}{\Gamma\left(1+1/p\right)}
\end{equation}
for $q\geq 1$, we can express the moments of $p$-generalized Gaussian random variables as follows:
\begin{align}\label{eq:Moments}
\E|Z_1|^q = M_p(q)\qquad\text{and}\qquad\Cov(|Z_1|^r,|Z_1|^s)=M_p(r+s)-M_p(r)M_p(s),
\end{align}
where $q,r,s\geq 1$, see \cite[Lemma 3.1]{APT2019}. In particular, $\Var|Z_1|^q=M_p(2q)-M_p(q)^2$. We shall use these relations frequently in the proof of Theorem \ref{thm:MDP} and its general version Theorem \ref{thm:MDPgen}.

If $p=2$, then a $p$-generalized Gaussian random variable is nothing else than a standard Gaussian random variable. In this text we shall denote a sequence of such independent random variables by $(g_i)_{i\in\N}$.

% % % % % % % % % % % % % % % % % % % % % % % % % % % % % % % %
\subsection{Background material from large deviations theory}
% % % % % % % % % % % % % % % % % % % % % % % % % % % % % % % % 

Let $(\Omega,\mathcal A,\Pro)$ be a probability space and $(X_n)_{n\in\N}$ be a sequence of random vectors taking values in $\R^d$ for some $d\geq 1$. Further, let $(s_n)_{n\in\N}$ be an increasing sequence of real numbers and $I:\R^d\to[0,\infty]$ be a lower semi-continuous function with compact level sets $\{x\in\R^d\,:\, I(x) \leq \alpha \}$, $\alpha\in\R$. One says that $(X_n)_{n\in\N}$ satisfies a large deviations principle (LDP) on $\R^d$ with speed $s_n$ and rate function $I$ provided that
\begin{equation*}
\begin{split}
-\inf_{x\in A^\circ}I(x) &\leq\liminf_{n\to\infty}s_n^{-1}\log\Pro[X_n\in A]\leq\limsup_{n\to\infty}s_n^{-1}\log\Pro[X_n\in A]\leq-\inf_{x\in\overline{A}}I(x)
\end{split}
\end{equation*}
for all Borel measurable $A\subseteq\R^d$. Especially, if $A$ is an $I$-continuity set, that is, if $A$ satisfies $I(A^\circ)=I(\bar{A})$ with $I(A):=\inf\{I(x):x\in A\}$, one has the exact limit relation
$$
\lim_{n\to\infty}s_n^{-1}\log\Pro[X_n\in A]=-I(A).
$$

Let $d_1,d_2\in\N$ and suppose that $(X_n)_{n\in\N}$ is a sequence of $\R^{d_1}$-valued random vectors and that $(Y_n)_{n\in\N}$ is a sequence of $\R^{d_2}$-random vectors. We assume that both sequences satisfy LDPs with the same speed. The next result, taken from \cite[Proposition 2.4]{APT2018}, yields that also the sequence of $\R^{d_1+d_2}$-valued random vectors $(X_n,Y_n)$ satisfies an LDP and provides the form of the rate function.

\begin{lemma}[LDP for independent vectors]\label{JointRateFunction}
Assume that $(X_n)_{n\in\N}$ satisfies an LDP on $\R^{d_1}$ with speed $s_n$ and rate function $I_X$ and that $(Y_n)_{n\in\N}$ satisfies an LDP on $\R^{d_2}$ with speed $s_n$ and rate function $I_Y$. Then, if $X_n$ and $Y_n$ are independent for each $n\in\N$, the sequence of random vectors $(X_n,Y_n)$ satisfies an LDP on $\R^{d_1+d_2}$ with speed $s_n$ and rate function $I$ given by $I(x):=I_X(x_1)+I_Y(x_2)$, $x=(x_1,x_2)\in\R^{d_1}\times\R^{d_2}$.
\end{lemma}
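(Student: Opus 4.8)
The plan is to run the classical argument showing that a product of exponentially tight sequences which individually satisfy LDPs with a common speed again satisfies an LDP, the product structure of the laws making all probabilistic estimates elementary. Write $\mu_n$, $\nu_n$ for the laws of $X_n$, $Y_n$; independence gives that $(X_n,Y_n)$ has law $\mu_n\otimes\nu_n$, so for rectangles $\Pro[(X_n,Y_n)\in A\times B]=\Pro[X_n\in A]\,\Pro[Y_n\in B]$. First I would check that $I(x_1,x_2)=I_X(x_1)+I_Y(x_2)$ qualifies as a rate function: it is lower semi-continuous as a sum of lower semi-continuous functions, and since $I_X,I_Y\geq 0$ we have $\{I\leq\alpha\}\subseteq\{I_X\leq\alpha\}\times\{I_Y\leq\alpha\}$, a product of compact sets, so the (closed) level set $\{I\leq\alpha\}$ is compact.

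For the lower bound, fix an open set $G\subseteq\R^{d_1+d_2}$ and a point $z=(x_1,x_2)\in G$. Since products of open sets form a base of the product topology, choose open $A\ni x_1$ and $B\ni x_2$ with $A\times B\subseteq G$. Combining the rectangle identity with superadditivity of $\liminf$ and the LDP lower bounds for $(X_n)$ and $(Y_n)$ yields
\[
\liminf_{n\to\infty}s_n^{-1}\log\Pro[(X_n,Y_n)\in G]\geq -\inf_{A}I_X-\inf_{B}I_Y\geq -I_X(x_1)-I_Y(x_2)=-I(z),
\]
and taking the supremum over $z\in G$ gives $\liminf_{n\to\infty}s_n^{-1}\log\Pro[(X_n,Y_n)\in G]\geq -\inf_G I$.

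For the upper bound I would first record exponential tightness. Since $I_X$ has compact, hence bounded, level sets, for each $M>0$ the set $\{I_X\leq M\}$ lies in an open ball, so there is a compact ball $K_M$ whose complement is contained in a closed set on which $I_X\geq M$; the LDP upper bound then gives $\limsup_{n\to\infty}s_n^{-1}\log\Pro[X_n\notin K_M]\leq -M$, and likewise for $Y_n$, whence $(X_n,Y_n)$ is exponentially tight via the compact sets $K_M\times K_M'$ (using $\log(a_n+b_n)\leq\log 2+\max(\log a_n,\log b_n)$ and $s_n^{-1}\log 2\to 0$). Next, for a compact set $K$ and $\delta>0$, lower semi-continuity of $I_X$ and $I_Y$ lets me cover $K$ by finitely many rectangles $A_i\times B_i$ with $\inf_{\overline{A_i}}I_X+\inf_{\overline{B_i}}I_Y\geq\min(\inf_K I-\delta,\,\delta^{-1})$; a union bound over a finite subcover together with the rectangle identity, subadditivity of $\limsup$, and the marginal upper bounds gives $\limsup_{n\to\infty}s_n^{-1}\log\Pro[(X_n,Y_n)\in K]\leq-\min(\inf_K I-\delta,\,\delta^{-1})$, and letting $\delta\downarrow 0$ settles the compact case.

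Finally, exponential tightness upgrades the compact-set bound to an arbitrary closed set $F$ in the usual way: $\Pro[(X_n,Y_n)\in F]\leq\Pro[(X_n,Y_n)\in F\cap K_M]+\Pro[(X_n,Y_n)\notin K_M]$, where $F\cap K_M$ is compact, so $\limsup_{n\to\infty}s_n^{-1}\log\Pro[(X_n,Y_n)\in F]\leq\max(-\inf_F I,\,-M)$, and $M\to\infty$ finishes. Each step is individually short; the only point requiring care is this passage from compact to closed sets, and it is exactly there that the good-rate-function hypothesis enters, through exponential tightness.
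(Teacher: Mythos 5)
Your proof is correct. Note that the paper itself does not prove this lemma at all: it imports it verbatim from \cite[Proposition 2.4]{APT2018}, so the only comparison available is with that cited source, whose argument runs along the same classical lines as yours (product structure of the joint law, lower bound via open rectangles, upper bound via exponential tightness inherited from the goodness of the marginal rate functions). What your write-up adds is a completely self-contained treatment: you verify that $I(x_1,x_2)=I_X(x_1)+I_Y(x_2)$ is a good rate function, you carry out the finite rectangle cover of a compact set using lower semi-continuity with the standard $\min(\inf_K I-\delta,\delta^{-1})$ truncation to handle $\inf_K I=\infty$, and you correctly isolate the one place where the compact-level-set hypothesis (which is built into the paper's definition of a rate function) is genuinely needed, namely the upgrade from compact to arbitrary closed sets via exponential tightness; this is exactly the hypothesis without which the statement can fail, so flagging it is the right emphasis. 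The only implicit assumption you rely on, as does the paper, is that the speed satisfies $s_n\to\infty$, so that $s_n^{-1}\log 2\to 0$ and $s_n^{-1}\log N\to 0$ in the union-bound steps; this holds for every speed used in the paper.
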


Next, assume that a sequence $(X_n)_{n\in\N}$ of random variables satisfies an LDP with speed $s_n$ and rate function $I$. Suppose now that $(Y_n)_{n\in\N}$ is a sequence of random variables that are `close' to the ones from the first sequence. The next result provides conditions under which in such a situation an LDP from the first can be transferred to the second sequence, see \cite[Theorem 4.2.13]{DZ2010} or \cite[Lemma 27.13]{Kallenberg}.

\begin{lemma}[Exponential equivalence]\label{prop:exponentially equivalent}
Let $(X_n)_{n\in\N}$ and $(Y_n)_{n\in\N}$ be two sequence of $\R^d$-valued random vectors and assume that $(X_n)_{n\in\N}$ satisfies an LDP on $\R^d$ with speed $s_n$ and rate function $I$. Further, suppose that the two sequences $(X_n)_{n\in\N}$ and $(Y_n)_{n\in\N}$ are exponentially equivalent, i.e.,
$$
\limsup_{n\to\infty}s_n^{-1}\log\Pro\big[\|X_n-Y_n\|_2>\delta\big] = -\infty
$$
for any $\delta>0$. Then $(Y_n)_{n\in\N}$ satisfies an LDP on $\R^d$ with the same speed and the same rate function.
\end{lemma}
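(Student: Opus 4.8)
The plan is to deduce the two halves of the large deviations principle for $(Y_n)_{n\in\N}$ — the upper bound over closed sets and the lower bound over open sets — from the corresponding bounds for $(X_n)_{n\in\N}$, using the exponential equivalence estimate to absorb the discrepancy between the two sequences. No new rate function has to be constructed, so the lower semicontinuity and compactness of level sets required of the rate function are simply inherited from the hypothesis on $(X_n)_{n\in\N}$. Throughout I would write $B(x,\delta)=\{y\in\R^d:\|y-x\|_2<\delta\}$ and, for closed $F\subseteq\R^d$, $F^\delta=\{x\in\R^d:\inf_{y\in F}\|x-y\|_2\le\delta\}$, which is again closed, and I would use the elementary fact that $\limsup_n s_n^{-1}\log(c_n+d_n)\le\max\{\limsup_n s_n^{-1}\log c_n,\ \limsup_n s_n^{-1}\log d_n\}$ for nonnegative sequences $(c_n)_{n\in\N},(d_n)_{n\in\N}$, which follows from $c_n+d_n\le 2\max\{c_n,d_n\}$ and $s_n\to\infty$.

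For the upper bound I would fix a closed set $F$ and $\delta>0$. If $Y_n\in F$ and $\|X_n-Y_n\|_2\le\delta$ then $X_n\in F^\delta$, which gives the inclusion $\{Y_n\in F\}\subseteq\{X_n\in F^\delta\}\cup\{\|X_n-Y_n\|_2>\delta\}$ and hence $\Pro[Y_n\in F]\le\Pro[X_n\in F^\delta]+\Pro[\|X_n-Y_n\|_2>\delta]$. Applying the LDP upper bound for $(X_n)_{n\in\N}$ to the closed set $F^\delta$, the exponential equivalence hypothesis (which makes the second term superexponentially small), and the elementary fact above yields $\limsup_n s_n^{-1}\log\Pro[Y_n\in F]\le-\inf_{F^\delta}I$. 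It then remains to let $\delta\downarrow 0$ and verify $\inf_{F^\delta}I\uparrow\inf_F I$: the inequality $\inf_{F^\delta}I\le\inf_F I$ is immediate from $F\subseteq F^\delta$, and for the reverse, if $\Lambda:=\sup_{\delta>0}\inf_{F^\delta}I<\inf_F I$ then, choosing $\delta_k\downarrow 0$ and $x_k\in F^{\delta_k}$ with $I(x_k)\le\Lambda+1$, the $x_k$ all lie in one fixed compact level set of $I$, so a subsequence converges to some $x^*$ with $x^*\in F$ (since $\inf_{y\in F}\|x_k-y\|_2\le\delta_k\to 0$), and lower semicontinuity gives $\inf_F I\le I(x^*)\le\Lambda$, a contradiction; the case $\Lambda=\infty$ is trivial. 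This gives $\limsup_n s_n^{-1}\log\Pro[Y_n\in F]\le-\inf_F I$.

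For the lower bound I would fix an open set $G$ and a point $x\in G$, and may assume $I(x)<\infty$ (otherwise $-I(x)=-\infty$ is a trivial lower bound). Choose $\delta>0$ with $B(x,\delta)\subseteq G$. If $X_n\in B(x,\delta/2)$ and $\|X_n-Y_n\|_2\le\delta/2$ then $Y_n\in B(x,\delta)\subseteq G$, so $\Pro[Y_n\in G]\ge c_n-d_n$ with $c_n:=\Pro[X_n\in B(x,\delta/2)]$ and $d_n:=\Pro[\|X_n-Y_n\|_2>\delta/2]$. The LDP lower bound for $(X_n)_{n\in\N}$ applied to the open ball gives $\liminf_n s_n^{-1}\log c_n\ge-\inf_{B(x,\delta/2)}I\ge-I(x)>-\infty$, while exponential equivalence gives $s_n^{-1}\log d_n\to-\infty$; combining these shows $d_n/c_n\to 0$, so $c_n-d_n\ge\tfrac12 c_n$ for all large $n$ and therefore $\liminf_n s_n^{-1}\log\Pro[Y_n\in G]\ge\liminf_n s_n^{-1}\log c_n\ge-I(x)$. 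Taking the supremum over $x\in G$ gives $\liminf_n s_n^{-1}\log\Pro[Y_n\in G]\ge-\inf_G I$, completing the proof.

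The argument is mostly bookkeeping once the two set inclusions are written down, but there are two points that need genuine care. The first, which I expect to be the main obstacle, is the passage $\delta\downarrow 0$ in the upper bound: the convergence $\inf_{F^\delta}I\to\inf_F I$ relies essentially on the compactness of the level sets of $I$ together with its lower semicontinuity, since without this the infimum over the closed $\delta$-neighborhoods could escape to infinity. The second is that the lower-bound step involves a \emph{difference} $c_n-d_n$ of probabilities rather than a sum, so one must first invoke the LDP lower bound for $(X_n)_{n\in\N}$ to know that $c_n$ does not decay faster than $e^{-I(x)s_n}$ — precisely the control that makes the superexponentially small $d_n$ negligible.
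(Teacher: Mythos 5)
Your argument is correct, and it is essentially the canonical proof of this lemma: the paper itself gives no proof but quotes the result from \cite[Theorem 4.2.13]{DZ2010}, whose proof proceeds exactly as you do (upper bound via the closed $\delta$-blowups $F^\delta$ plus the superexponential error term, limit $\inf_{F^\delta}I\uparrow\inf_F I$ via compact level sets and lower semicontinuity, lower bound via small balls $B(x,\delta/2)$ and the difference $c_n-d_n$). One cosmetic repair: in the $\delta\downarrow 0$ step you should take near-optimizers $x_k\in F^{\delta_k}$ with $I(x_k)\le\inf_{F^{\delta_k}}I+1/k$ (not merely $I(x_k)\le\Lambda+1$), since only then does lower semicontinuity along the convergent subsequence give $I(x^*)\le\Lambda$ rather than $\Lambda+1$, which is what the contradiction requires.
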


Let us now recall what is known as Cram\'er's theorem for sequences of real-valued random variables. It provides an LDP for sequences of independent and identically distributed random variables, see \cite[Theorem 2.2.3]{DZ2010}. The rate function in Cram\'er's theorem is identified as the Legendre-Fenchel transform of the cumulant generating function of the involved random variables.

\begin{lemma}[Cram\'er's theorem]\label{lem:cramer}
Let $(X_n)_{n\in\N}$ be a sequence of independent and identically distributed random variables. Assume that $\E[e^{\lambda X_1}]<\infty$ for some $\lambda>0$. Then the sequence of random variables ${1\over n}\sum_{i=1}^n X_i$ satisfies an LDP on $\R$ with speed $n$ and rate function $I(x)=\sup\big\{\lambda x-\log\E [e^{\lambda X_1}]:\lambda\in\R\big\}$.
\end{lemma}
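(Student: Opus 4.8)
The plan is to carry out the two classical halves of Cram\'er's theorem: an upper bound for closed sets obtained from the exponential Chebyshev (Chernoff) inequality, and a lower bound for open sets obtained from an exponential change of measure. Throughout write $S_n=\frac1n\sum_{i=1}^nX_i$, let $\mu$ be the law of $X_1$, let $\Lambda(\lambda)=\log\E[e^{\lambda X_1}]$ be the cumulant generating function, and let $\Lambda^*(x)=\sup_{\lambda\in\R}\{\lambda x-\Lambda(\lambda)\}$ be its Legendre--Fenchel transform, so $I=\Lambda^*$. First I would collect the elementary structural facts I will need: $\Lambda$ is convex, lower semicontinuous, and satisfies $\Lambda(0)=0$; consequently $\Lambda^*$ is convex, nonnegative, and lower semicontinuous (being a supremum of affine functions); and, when $\bar x:=\E[X_1]$ is finite, $\Lambda^*(\bar x)=0$ while $\Lambda^*$ is nonincreasing on $(-\infty,\bar x]$ and nondecreasing on $[\bar x,\infty)$. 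The hypothesis $\E[e^{\lambda_0X_1}]<\infty$ for some $\lambda_0>0$ is used here to guarantee $\Lambda^*(x)\ge\lambda_0 x-\Lambda(\lambda_0)\to\infty$ as $x\to+\infty$; together with the corresponding control on the left (which in the situations where the lemma is applied is available because the summands are bounded below, so that $\Lambda$ is finite in a two-sided neighbourhood of $0$) this shows that $\Lambda^*$ has compact level sets, i.e.\ is a good rate function. The degenerate case in which $X_1$ is almost surely constant is disposed of directly.

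\textbf{Upper bound.} Fix a closed set $F\subseteq\R$. For $x\ge\bar x$ and $\lambda\ge0$, Markov's inequality applied to $e^{\lambda\sum_iX_i}$ gives $\Pro[S_n\ge x]\le e^{-n(\lambda x-\Lambda(\lambda))}$; optimizing over $\lambda\ge0$ and invoking the monotonicity of $\Lambda^*$ yields $\Pro[S_n\ge x]\le e^{-n\Lambda^*(x)}$, and symmetrically $\Pro[S_n\le x]\le e^{-n\Lambda^*(x)}$ for $x\le\bar x$. Writing $F=(F\cap[\bar x,\infty))\cup(F\cap(-\infty,\bar x])$ and choosing in each piece the point nearest to $\bar x$, one obtains $\Pro[S_n\in F]\le 2\,e^{-n\inf_{x\in F}\Lambda^*(x)}$, hence $\limsup_n n^{-1}\log\Pro[S_n\in F]\le-\inf_{x\in F}\Lambda^*(x)$.

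\textbf{Lower bound.} It suffices, by the usual reduction, to show that for every $x\in\R$ and every $\delta>0$ one has $\liminf_n n^{-1}\log\Pro[S_n\in(x-\delta,x+\delta)]\ge-\Lambda^*(x)$, since every open set contains a ball around each of its points. The core case is the one in which the supremum defining $\Lambda^*(x)$ is attained at an interior point $\eta$ of $\{\Lambda<\infty\}$, so that $\Lambda'(\eta)=x$ and $\Lambda^*(x)=\eta x-\Lambda(\eta)$. Here I would introduce the tilted law $\tilde\mu(\dint y)=e^{\eta y-\Lambda(\eta)}\mu(\dint y)$, which has mean $x$, and estimate, with $\tilde\Pro=\tilde\mu^{\otimes n}$ and $\bar y:=n^{-1}\sum_{i=1}^n y_i$,
\begin{align*}
\Pro\big[S_n\in(x-\delta,x+\delta)\big]
&=\int_{\{\,|\bar y-x|<\delta\,\}} e^{-\eta n\bar y+n\Lambda(\eta)}\,\prod_{i=1}^n\tilde\mu(\dint y_i)\\
&\ge e^{-n(\eta x+|\eta|\delta-\Lambda(\eta))}\,\tilde\Pro\big[S_n\in(x-\delta,x+\delta)\big].
\end{align*}
Under $\tilde\Pro$ the $X_i$ are i.i.d.\ with mean $x$, so the weak law of large numbers gives $\tilde\Pro[S_n\in(x-\delta,x+\delta)]\to1$; therefore $\liminf_n n^{-1}\log\Pro[S_n\in(x-\delta,x+\delta)]\ge-(\eta x-\Lambda(\eta))-|\eta|\delta=-\Lambda^*(x)-|\eta|\delta$, and letting $\delta\downarrow0$ finishes this case.

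\textbf{Main obstacle.} The hard part is the lower bound when the supremum defining $\Lambda^*(x)$ is \emph{not} attained at an interior point of $\{\Lambda<\infty\}$: this happens when $x$ lies outside the range of $\Lambda'$, e.g.\ when $x$ is the essential supremum or infimum of $\mu$ or lies beyond the support, where $\Lambda^*(x)$ may be finite but attained only in the limit $\lambda\to\infty$, or equal to $+\infty$. I would handle these by truncation: replace $\mu$ by its renormalized restriction to a compact interval $[-A,A]$, for which $\Lambda$ is finite on all of $\R$ and the tilting argument above applies verbatim to every $x$ in the interior of the truncated support; then let $A\to\infty$ and use monotone convergence together with the lower semicontinuity and convexity of the Legendre transforms to recover the bound for the original $\Lambda^*$. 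The remaining boundary cases are easy: if $\Lambda^*(x)=+\infty$ the inequality is vacuous, and if $x$ is an atom of $\mu$ sitting at an endpoint of its support then $\Pro[S_n=x]\ge\mu(\{x\})^n$ already gives $n^{-1}\log\Pro[S_n\in(x-\delta,x+\delta)]\ge\log\mu(\{x\})=-\Lambda^*(x)$. Combining the closed-set upper bound with the open-set lower bound yields the LDP on $\R$ with speed $n$ and rate function $I=\Lambda^*$.
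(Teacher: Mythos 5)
Your proposal is correct in outline, but note that the paper does not prove this lemma at all: it is quoted as background and attributed to the standard reference (Dembo--Zeitouni, Theorem 2.2.3), so what you have written is essentially a reconstruction of the classical textbook proof rather than an alternative to anything in the paper. Your two halves -- the Chernoff/Markov upper bound for closed sets split at the mean, and the lower bound via exponential tilting plus the weak law under the tilted measure, with truncation to handle points where the supremum defining $\Lambda^*$ is not attained in the interior of $\{\Lambda<\infty\}$ -- are exactly the ingredients of that standard proof, and the remaining details you wave at (monotone convergence of the truncated cumulant generating functions and convergence of their Legendre transforms, the endpoint-atom case) are routine and can be filled in. One point worth making explicit: under the stated one-sided hypothesis $\E[e^{\lambda X_1}]<\infty$ for some $\lambda>0$, the rate function $\Lambda^*$ need not have compact level sets (e.g.\ it vanishes on all of $(-\infty,\E X_1]$ when $\Lambda=+\infty$ on $(-\infty,0)$), whereas the paper's definition of an LDP demands compact level sets; you correctly flag this and observe that in every application in the paper the summands are nonnegative (squares or $p$-th powers), so $\Lambda$ is finite on a two-sided neighbourhood of the origin and goodness of the rate function is restored. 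Making that remark precise is the only substantive gap between your sketch and a statement matching the paper's conventions; the probabilistic core of your argument is sound.
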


Finally, we consider the possibility to transport a large deviations principle to another one by means of a a sequence of functions. We recall the following version of the contraction principle from \cite[Corollary 4.2.21]{DZ2010}.

\begin{lemma}[Contraction principle]\label{lem:refinement contraction principle}
Let $d_1,d_2\in\N$ and let $F:\R^{d_1}\to\R^{d_2}$ be a continuous function. Suppose that $(X_n)_{n\in\N}$ is a sequence of $\R^{d_1}$-valued random variables that satisfies an LDP on $\R^{d_1}$ with speed $s_n$ and rate function $I$. Further, suppose that for each $n\in\N$, $F_n:\R^{d_1}\to\R^{d_2}$ is a measurable function such that for all $\delta>0$,
$$
\limsup_{n\to\infty}s_n^{-1}\log\Pro[X_n\in\Gamma_{n,\delta}]=-\infty,
$$
where $\Gamma_{n,\delta}:=\{x\in\R^{d_1}:\|F_n(x)-F(x)\|_2>\delta\}$. Then the sequence of $\R^{d_2}$-valued random variables $(F_n(X_n))_{n\in\N}$ satisfies an LDP on $\R^{d_2}$ with the same speed and with rate function $I\circ F^{-1}$.
\end{lemma}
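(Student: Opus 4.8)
The plan is to factor the assertion through the classical contraction principle and the exponential-equivalence lemma recorded above as Lemma~\ref{prop:exponentially equivalent}. The first step is to show that the image sequence $(F(X_n))_{n\in\N}$ satisfies an LDP on $\R^{d_2}$ with speed $s_n$ and rate function $I\circ F^{-1}$, where $(I\circ F^{-1})(y):=\inf\{I(x):F(x)=y\}$ with the convention $\inf\emptyset=+\infty$. This is the standard contraction principle, but it is short enough to run directly. One first checks that $I\circ F^{-1}$ is a rate function in the sense used in the paper: its sublevel set $\{y:(I\circ F^{-1})(y)\le\alpha\}$ equals $F(\{x:I(x)\le\alpha\})$, the continuous image of a compact set, hence compact, and from the compactness of the sublevel sets lower semicontinuity follows. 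For the two bounds, fix a Borel set $B\subseteq\R^{d_2}$. Since $F$ is continuous, $F^{-1}(B^\circ)$ is open and $F^{-1}(\overline B)$ is closed, and $F^{-1}(B^\circ)\subseteq F^{-1}(B)\subseteq F^{-1}(\overline B)$. By monotonicity of $\Pro$ and the LDP for $(X_n)_{n\in\N}$ applied to the open set $F^{-1}(B^\circ)$ and to the closed set $F^{-1}(\overline B)$,
\[
\liminf_{n\to\infty}s_n^{-1}\log\Pro[F(X_n)\in B]\ge\liminf_{n\to\infty}s_n^{-1}\log\Pro[X_n\in F^{-1}(B^\circ)]\ge-\inf_{x\in F^{-1}(B^\circ)}I(x),
\]
\[
\limsup_{n\to\infty}s_n^{-1}\log\Pro[F(X_n)\in B]\le\limsup_{n\to\infty}s_n^{-1}\log\Pro[X_n\in F^{-1}(\overline B)]\le-\inf_{x\in F^{-1}(\overline B)}I(x),
\]
and the elementary identity $\inf_{x\in F^{-1}(C)}I(x)=\inf_{y\in C}(I\circ F^{-1})(y)$, valid for every $C\subseteq\R^{d_2}$, rewrites the right-hand sides as $-\inf_{y\in B^\circ}(I\circ F^{-1})(y)$ and $-\inf_{y\in\overline B}(I\circ F^{-1})(y)$, which is the claimed LDP for $(F(X_n))_{n\in\N}$.

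The second step is to observe that the hypothesis on $\Gamma_{n,\delta}$ is precisely the statement that $(F_n(X_n))_{n\in\N}$ is exponentially equivalent to $(F(X_n))_{n\in\N}$ at speed $s_n$. Indeed, $F$ is continuous and each $F_n$ is measurable, so $x\mapsto\|F_n(x)-F(x)\|_2$ is measurable and $\Gamma_{n,\delta}$ is a Borel set; moreover $\{\,\|F_n(X_n)-F(X_n)\|_2>\delta\,\}=\{X_n\in\Gamma_{n,\delta}\}$, whence
\[
\limsup_{n\to\infty}s_n^{-1}\log\Pro\big[\|F_n(X_n)-F(X_n)\|_2>\delta\big]=\limsup_{n\to\infty}s_n^{-1}\log\Pro[X_n\in\Gamma_{n,\delta}]=-\infty
\]
for every $\delta>0$, which is exactly the condition in Lemma~\ref{prop:exponentially equivalent}.

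The final step is to invoke Lemma~\ref{prop:exponentially equivalent} with $(F(X_n))_{n\in\N}$ in the role of the sequence satisfying the LDP and $(F_n(X_n))_{n\in\N}$ in the role of the exponentially equivalent sequence; this yields that $(F_n(X_n))_{n\in\N}$ satisfies an LDP on $\R^{d_2}$ with the same speed $s_n$ and the same rate function $I\circ F^{-1}$, which is the assertion. I do not expect a genuine obstacle: the only points requiring a little care are the set-theoretic manipulation of preimages of interiors and closures under $F$ in the first step, and the verification that $I\circ F^{-1}$ has compact sublevel sets, which is exactly where the compactness of the sublevel sets of $I$ together with the continuity of $F$ enter. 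Everything else is a direct application of the two lemmas already at hand.
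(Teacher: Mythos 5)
Your argument is correct, and it is essentially the standard proof of this statement: the paper itself gives no proof but cites \cite[Corollary 4.2.21]{DZ2010}, and your route --- first the classical contraction principle for the continuous map $F$ (using goodness of $I$ and continuity of $F$ to see that $I\circ F^{-1}$ has compact level sets, via attainment of the infimum over the fibers), then the observation that the $\Gamma_{n,\delta}$-condition is exactly exponential equivalence of $(F_n(X_n))_{n\in\N}$ and $(F(X_n))_{n\in\N}$, then Lemma~\ref{prop:exponentially equivalent} --- is precisely how that reference establishes it. The only point stated rather than proved is the level-set identity $\{y:(I\circ F^{-1})(y)\le\alpha\}=F(\{x:I(x)\le\alpha\})$, whose nontrivial inclusion requires the attainment argument you allude to (take a minimizing sequence in the compact set $\{I\le\alpha+1\}$, pass to a limit, and use lower semicontinuity of $I$ and continuity of $F$); with that detail filled in, the proof is complete.
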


\subsection{Moderate deviations in $\R^d$}

A moderate deviations principle (MDP) is formally nothing else than a large deviations principle. As already explained in the introduction, the difference is that LDPs provide estimates on the scale of a law of large numbers, while MDPs typically describe the probabilities at scales between a law of large numbers and a central limit theorem.
An important tool for us will be the following MDP for sums of independent and identically distributed random vectors, see \cite[Theorem 3.7.1]{DZ2010}.

\begin{lemma}[MDP for sums of i.i.d.\ random vectors]\label{lem:MDPVector}
Let $(X_n)_{n\in\N}$ be a sequence of independent and identically distributed random vectors in $\R^d$ and let $(t_n)_{n\in\N}$ be sequence of positive real numbers such that {$t_n=\omega(1)$} and $t_n=o(\sqrt{n})$. We assume that $X_1$ is centered, its covariance matrix $\bC=\Cov(X_1)$ is invertible, and $\log\E[e^{\langle \lambda,X_1\rangle}]<\infty$ for all $\lambda$ in a ball around the origin having positive radius. Then the sequence of random vectors $\frac{1}{t_n\sqrt{n}}\sum_{i=1}^nX_i$, $n\in\N$, satisfies an LDP (i.e.,\ an MDP as the sum is scaled by $t_n\sqrt{n}$) with speed $t_n^2$  and rate function $I(x)={1\over 2}\langle x,\bC^{-1}x\rangle$, $x\in\R^d$. 
\end{lemma}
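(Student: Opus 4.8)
The plan is to deduce the statement from the Gärtner--Ellis theorem. Write $S_n:=\frac{1}{t_n\sqrt n}\sum_{i=1}^n X_i$ and let $\Lambda_1(u):=\log\E[e^{\langle u,X_1\rangle}]$ be the cumulant generating function of $X_1$, which by assumption is finite on some ball $B(0,r)\subseteq\R^d$ with $r>0$. Since we aim at an LDP with speed $t_n^2$, the object to analyse is the scaled cumulant generating function
\begin{equation*}
\Lambda(\lambda):=\lim_{n\to\infty}\frac{1}{t_n^2}\log\E\big[e^{t_n^2\langle\lambda,S_n\rangle}\big]=\lim_{n\to\infty}\frac{n}{t_n^2}\,\Lambda_1\Big(\tfrac{t_n}{\sqrt n}\lambda\Big),
\end{equation*}
where the second equality uses independence and the identical distribution of the $X_i$, together with the identity $t_n^2\langle\lambda,S_n\rangle=\frac{t_n}{\sqrt n}\langle\lambda,\sum_{i=1}^nX_i\rangle$.

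Next I would exploit the assumption $t_n=o(\sqrt n)$: it forces $\frac{t_n}{\sqrt n}\lambda\to0$ for every fixed $\lambda\in\R^d$, so eventually this point lies in $B(0,r)$, where $\Lambda_1$ is smooth with $\Lambda_1(0)=0$, $\nabla\Lambda_1(0)=\E[X_1]=0$ (centering) and $\nabla^2\Lambda_1(0)=\Cov(X_1)=\bC$. A second-order Taylor expansion gives $\Lambda_1(u)=\tfrac12\langle u,\bC u\rangle+o(\|u\|^2)$ as $u\to0$, and substituting $u=\frac{t_n}{\sqrt n}\lambda$ yields
\begin{equation*}
\frac{n}{t_n^2}\,\Lambda_1\Big(\tfrac{t_n}{\sqrt n}\lambda\Big)=\frac12\langle\lambda,\bC\lambda\rangle+\frac{n}{t_n^2}\cdot o\Big(\tfrac{t_n^2}{n}\|\lambda\|^2\Big)\xrightarrow[n\to\infty]{}\frac12\langle\lambda,\bC\lambda\rangle.
\end{equation*}
Hence $\Lambda(\lambda)=\tfrac12\langle\lambda,\bC\lambda\rangle$ exists and is finite for every $\lambda\in\R^d$.

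To conclude I would invoke the Gärtner--Ellis theorem. Since $\Lambda$ is a finite quadratic form, its effective domain is all of $\R^d$ and it is everywhere differentiable, hence essentially smooth (steepness being vacuous), so $(S_n)_{n\in\N}$ satisfies an LDP with speed $t_n^2$ --- here the hypothesis $t_n=\omega(1)$ ensures $t_n^2\to\infty$, so that this is a genuine LDP, i.e.\ an MDP at the stated scale --- with rate function the Legendre--Fenchel transform
\begin{equation*}
\Lambda^*(x)=\sup_{\lambda\in\R^d}\Big(\langle\lambda,x\rangle-\tfrac12\langle\lambda,\bC\lambda\rangle\Big)=\tfrac12\langle x,\bC^{-1}x\rangle,
\end{equation*}
where the invertibility of $\bC$ is used to evaluate the supremum (attained at $\lambda=\bC^{-1}x$) and to see that $\Lambda^*$ is finite, strictly convex, and has compact level sets.

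The only mildly technical point is making the Taylor estimate legitimate, namely controlling the remainder $o(\|u\|^2)$ so that multiplying by $n/t_n^2$ still lets it vanish; this is immediate once $\lambda$ is fixed and one uses $\frac{t_n}{\sqrt n}\lambda\to0$, so the argument is in fact routine. Accordingly, the substance of the proof lies entirely in the computation of $\Lambda$ above, and the statement is classical, coinciding with \cite[Theorem 3.7.1]{DZ2010}.
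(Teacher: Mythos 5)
Your argument is correct: the scaled cumulant generating function computation, the Taylor expansion of $\Lambda_1$ at the origin (valid since finiteness on a ball makes $\Lambda_1$ smooth there, with $\nabla\Lambda_1(0)=0$ and $\nabla^2\Lambda_1(0)=\bC$), and the application of the G\"artner--Ellis theorem with the everywhere-finite quadratic limit $\Lambda(\lambda)=\tfrac12\langle\lambda,\bC\lambda\rangle$ together yield exactly the stated MDP with rate $\tfrac12\langle x,\bC^{-1}x\rangle$. The paper offers no proof of its own --- it simply cites \cite[Theorem 3.7.1]{DZ2010} --- and your G\"artner--Ellis route is precisely the standard argument underlying that reference, so there is nothing to add.
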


% % % % % % % % % % % % % % % % % % % % % %
\section{Proof of Theorem \ref{thm:KLS}}
% % % % % % % % % % % % % % % % % % % % % %

Let us recall from \cite[Theorem 1.15]{AGB2015} (see also \cite{GM1987} and \cite{L1994}) that if the KLS conjecture were true, there would exist an absolute constant $C\in(0,\infty)$ such that, for every $n\in\N$ and all $t>0$,
$$
\Pro\left[\left|\frac{\Vert \xi_n\Vert_2}{\sqrt{k_n}}-1\right|>t\right]\leq 2e^{-Ct\sqrt{k_n}}.
$$
Therefore, we have 
\begin{eqnarray*}
\log \Pro\left[\frac{\Vert \xi_n\Vert_2}{\sqrt{k_n}}-1>t\right]&\leq&\log\Pro\left[\left|\frac{\Vert \xi_n\Vert_2}{\sqrt{k_n}}-1\right|>t\right]\leq \log 2-Ct\sqrt{k_n}
\end{eqnarray*}
and
\begin{eqnarray*}
\log \Pro\left[\frac{\Vert \xi_n\Vert_2}{\sqrt{k_n}}<1-t\right]&\leq&\log\Pro\left[\left|\frac{\Vert \xi_n\Vert_2}{\sqrt{k_n}}-1\right|>t\right]\leq \log 2-Ct\sqrt{k_n}
\end{eqnarray*}
and thus, for every $n\in\N$ and $t>0$,
\begin{eqnarray*}
\frac{\log \Pro\left[\frac{\Vert \xi_n\Vert_2}{\sqrt{k_n}}>1+t\right]}{s_n}&\leq&\frac{\log 2}{s_n}-\frac{Ct\sqrt{k_n}}{s_n}
\end{eqnarray*}
and
\begin{eqnarray*}
\frac{\log \Pro\left[\frac{\Vert \xi_n\Vert_2}{\sqrt{k_n}}<1-t\right]}{s_n}&\leq&\frac{\log 2}{s_n}-\frac{Ct\sqrt{k_n}}{s_n}.
\end{eqnarray*}
Therefore, taking the limit inferior, as $n\to\infty$, and taking into account the assumption that the sequence of random variables $\frac{\Vert X_n \Vert_2}{\sqrt{k_n}}$ satisfies \eqref{eq:ldp definition} {with speed $s_n$ and rate function $\rate$}, we make the following observation: if $s_n=o(\sqrt{k_n})$, then for every $t>0$,
$$
-\inf_{x\in(1+t,\infty)}\rate(x)\leq -\infty \quad \text{and}\quad -\inf_{x\in(-\infty,1-t)}\rate(x)\leq -\infty,
$$
which implies that $\rate$ is identically equal to $+\infty$ on $\R\setminus\{1\}$. Since
$$
\Pro\left[\frac{\Vert \xi_n\Vert_2}{\sqrt{k_n}}\in\R\right]=1,
$$
we have that, for every $n\in\N$,
$$
\frac{\log \Pro\left[\frac{\Vert \xi_n\Vert_2}{\sqrt{k_n}}\in\R\right]}{s_n}=0.
$$
Hence, taking the limit as $n\to\infty$,
$$
0 = -\inf_{x\in\R}\rate(x)=-\rate(1),
$$
{which implies that $\rate$ would coincide with the singular rate function $\rate_0$, a contradiction to our assumption that $\rate\neq\rate_0$.}

If otherwise
$s_n\approx\sqrt{k_n}$, then for any $t>0$ there exists a constant $C_1\in(0,\infty)$ such that
$$
-\inf_{x\in(t+1,\infty)}\rate(x)\leq -C_1t,
$$
which implies that
$$
\frac{\inf_{x\in(t+1,\infty)}\rate(x)}{t+1}>C_1{t\over t+1}
$$
for all $t>0$. This is equivalent to the fact that, for every $t>1$,
$$
\frac{\inf_{x\in(t,\infty)}\rate(x)}{t}>C_1{t-1\over t}.
$$
Now, for every $t_0>1$ we have that for all $t\in[t_0,\infty)$, $C_1{t-1\over t}>C_2(t_0)$. Thus, for any $t_0>1$,
$$
\inf_{t>t_0}\frac{\inf_{x\in (t,\infty)} \rate(x)}{t} \geq {C_2(t_0)}.
$$
However, this is a contradiction to our assumption.\hfill $\Box$

% % % % % % % % % % % % % % % % % % % % % %
\section{Proof of Theorem \ref{thm:CriticalScale} and its generalization}
% % % % % % % % % % % % % % % % % % % % % %

In this section we prove the following generalized version of Theorem \ref{thm:CriticalScale}. For this recall the definition of the probability measures $\bP_{\bW,n,p}$ on $\B_p^n$.

\begin{thm}[MDP on the critical scale, general version]\label{thm:CriticalScalegen}
Let $1\leq p<\infty$ and $(k_n)_{n\in\N}$ be a sequence of positive integers such that $1\leq k_n\leq n$, $k_n=\omega(1)$, and $k_n=o(n)$. Also let $(\bW_n)_{n\in\N}$ be a sequence of probability measures on $[0,\infty)$ and for each $n\in\N$ let $X_n$ be a random vector with distribution $\bP_{\bW_n,n,p}$. Independently of $X_n$, let $E_{n}$ be a uniformly distributed $k_n$-dimensional random subspace for each $n\in\N$, and define $\mathscr{Z}_{n,p}:=n^{1/p}\|P_{E_n}X_n\|_2$. Suppose that the sequence of random variables $W_n$ with distribution $\bW_n$ satisfies an LDP with speed $n$ and a rate function $\rate_W$ satisfying $\rate_W(x)\neq 0$ for all $x\neq 0$. Then the same conclusions as in Theorem \ref{thm:CriticalScale} hold.
\end{thm}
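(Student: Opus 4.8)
The plan is to realize $\mathscr{Z}_{n,p}$ as an explicit continuous functional of a handful of sums of i.i.d.\ random variables and then run the machinery of Section~2. \emph{Step 1 (representation).} Use the probabilistic representation of the distributions $\bP_{\bW_n,n,p}$ from \cite{BGMN2005} (cf.\ Section~2.2): if $(Z_i)_{i\in\N}$ are i.i.d.\ $p$-generalized Gaussians and $W_n$ has law $\bW_n$, independent of the $Z_i$, then $X_n\stackrel{d}{=}(Z_1,\dots,Z_n)\,(|Z_1|^p+\dots+|Z_n|^p+W_n)^{-1/p}$. Since $E_n$ is Haar distributed and independent of $X_n$, conditioning on $X_n$ and invoking rotational invariance of the Grassmannian Haar measure gives $\|P_{E_n}X_n\|_2\stackrel{d}{=}\|X_n\|_2\,\bigl(\tfrac{\sum_{i=1}^{k_n}g_i^2}{\sum_{i=1}^{n}g_i^2}\bigr)^{1/2}$ with $(g_i)_{i\in\N}$ i.i.d.\ standard Gaussian, independent of the $Z_i$ and of $W_n$. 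Writing $R_n:=\tfrac1n\sum_{i=1}^nZ_i^2$, $Q_n:=\tfrac1n\sum_{i=1}^n|Z_i|^p$, $U_n:=\tfrac1{k_n}\sum_{i=1}^{k_n}g_i^2$, $D_n:=\tfrac1n\sum_{i=1}^ng_i^2$ and tracking the normalizing factor $n^{1/p}$, one obtains the identity in law
\[
\frac{\mathscr{Z}_{n,p}^2}{k_n}\ \stackrel{d}{=}\ \frac{R_n\,U_n}{(Q_n+W_n/n)^{2/p}\,D_n}.
\]
Since $\E Z_1^2=M_p(2)$ and $\E|Z_1|^p=M_p(p)=1$ by \eqref{eq:Mpq}, the right-hand side tends to $M_p(2)$, matching the location of the minimum of each claimed rate function.

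\emph{Step 2 (speeds of the building blocks).} Cram\'er's theorem (Lemma~\ref{lem:cramer}) applied to the nonnegative variables $g_1^2$ gives an LDP for $U_n$ at speed $k_n$ with rate $\rate_\chi(u)=\tfrac12(u-1-\log u)$, $u>0$ (and $+\infty$ otherwise); applied to $|Z_1|^p$ and $g_1^2$ it yields LDPs for $Q_n$ and $D_n$ at speed $n$, so that $Q_n\to1$ and $D_n\to1$ exponentially fast at every speed $o(n)$; and since $\rate_W$ vanishes only at $0$ and has compact level sets (whence $\inf_{|x|\ge\delta}\rate_W(x)>0$ for each $\delta>0$), also $W_n/n\to0$ exponentially fast at every speed $o(n)$. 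The behaviour of $R_n$ is where the cases separate: if $p\ge2$ the moment generating function of $Z_1^2$ is finite near the origin and Cram\'er gives an LDP for $R_n$ at speed $n$; if $1\le p<2$ then $Z_1^2$ has a stretched-exponential (Weibull, shape $p/2<1$) upper tail, and $R_n$ instead satisfies an LDP at speed $n^{p/2}$ with the ``one big jump'' rate function $\rate_R(r)=\tfrac1p(r-M_p(2))^{p/2}$ for $r\ge M_p(2)$ and $\rate_R(r)=+\infty$ for $r<M_p(2)$ (the lower deviations occur at speed $n\gg n^{p/2}$, hence contribute $+\infty$ at speed $n^{p/2}$) --- a fact of the kind established in \cite{APT2018,KPT2019_I}.

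\emph{Step 3 (assembly).} Fix the target speed $s_n$: $s_n=k_n$ when $p\ge2$, or when $1\le p<2$ and $k_n=o(n^{p/2})$; and $s_n=n^{p/2}$ when $1\le p<2$ and either $k_n=n^{p/2}$ or $k_n=\omega(n^{p/2})$. The factor $(Q_n+W_n/n)^{-2/p}D_n^{-1}$ is exponentially equivalent to $1$ at speed $s_n$, so by a routine truncation (the excess factor $R_nU_n$ being controlled on an event of probability decaying at a speed $\omega(s_n)$) together with Lemma~\ref{prop:exponentially equivalent}, $k_n^{-1/2}\mathscr{Z}_{n,p}$ is exponentially equivalent at speed $s_n$ to $\sqrt{R_nU_n}$. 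In the first regime $R_n$ is moreover pinned to $M_p(2)$ (its own LDP speed, $n$ or $n^{p/2}$, being $\omega(k_n)$), so $\sqrt{R_nU_n}$ is exponentially equivalent to $\sqrt{M_p(2)\,U_n}$; the contraction principle (Lemma~\ref{lem:refinement contraction principle}) applied to $u\mapsto\sqrt{M_p(2)\,u}$ and the LDP of $U_n$ gives the LDP at speed $k_n$ with rate $x\mapsto\rate_\chi(x^2/M_p(2))=\tfrac{x^2-M_p(2)}{2M_p(2)}-\log\tfrac{x}{\sqrt{M_p(2)}}$, the rate function stated in cases (a) and (b1). When $1\le p<2$ and $k_n=n^{p/2}$, $R_n$ and $U_n$ are independent with LDPs at the \emph{same} speed $n^{p/2}$, so Lemma~\ref{JointRateFunction} gives a joint LDP with rate $\rate_R(r)+\rate_\chi(u)$, and contraction along $(r,u)\mapsto\sqrt{ru}$ produces the infimum formula of case (b2); when $1\le p<2$ and $k_n=\omega(n^{p/2})$ it is $U_n$ that is pinned (to $1$), so $\sqrt{R_nU_n}$ is exponentially equivalent to $\sqrt{R_n}$ and contraction along $r\mapsto\sqrt r$ gives case (b3). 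The domain restrictions ($y\ge m_p$ in (b2), $x>\sqrt{m_p}$ in (b3)) are exactly the effective domains coming from $\rate_R=+\infty$ below $M_p(2)$.

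\emph{Main obstacle.} The technical heart is the heavy-tailed large deviations analysis of $R_n=\tfrac1n\sum_{i=1}^nZ_i^2$ when $1\le p<2$: identifying the speed $n^{p/2}$ and the one-big-jump rate function, and proving the matching large deviation upper and lower bounds --- making rigorous that the excess $r-M_p(2)$ is carried by a single summand while controlling the light lower tail --- is precisely what generates the phase transition at $k_n\approx n^{p/2}$. A secondary, bookkeeping-type difficulty is to arrange the exponential-equivalence reductions so that the possibly unbounded factors being frozen to constants do not spoil the estimates (handled by noting that each frozen quantity concentrates at a strictly faster exponential speed than $s_n$) and to keep careful track of effective domains under the contraction principle, so that the rate functions emerge in exactly the form stated in Theorem~\ref{thm:CriticalScale}.
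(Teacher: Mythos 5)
Your proposal is correct and follows essentially the same route as the paper: the same probabilistic representation (the paper's Equation \eqref{eq:repZ}), the same identification of the dominating factor in each regime via the speeds $k_n$, $n$ and $n^{p/2}$, and the same toolkit of Cram\'er's theorem, the contraction principle, the joint LDP for independent sequences (used exactly as you do in the $k_n=n^{p/2}$ case), and exponential equivalence to freeze the remaining factors, with the heavy-tailed LDP for $\tfrac1n\sum Z_i^2$ at speed $n^{p/2}$ taken from \cite{APT2018} just as the paper does.
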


\subsection{Probabilistic representation}

The proof of Theorem \ref{thm:CriticalScalegen} is based on a suitable probabilistic representation of the random variables $\mathscr{Z}_{n,p}$. For the case that the random vectors $X_n$ are uniformly distributed on $\B_p^n$ such a representation was derived in \cite[Lemma 3.1]{APT2018} and for the general situation considered here it is the content of \cite[Proposition 2.7]{APT2019}. It says that
$$
k_n^{-1/2}\mathscr{Z}_{n,p}={n^{1/p}\over k_n^{1/2}}\|P_{E_n}X_n\|_2 \overset{d}{=} {n^{1/p}\over k_n^{1/2}}{\big(\sum\limits_{i=1}^{k_n}g_i^2\big)^{1/2}\over\big(\sum\limits_{i=1}^{n}g_i^2\big)^{1/2}}{\big(\sum\limits_{i=1}^{n}|Z_i|^2\big)^{1/2}\over\big(\sum\limits_{i=1}^{n}|Z_i|^p+W\big)^{1/p}},
$$
where $g_1,\ldots,g_n$ are independent standard Gaussian random variables and $Z_1,\ldots,Z_n$ are independent $p$-generalized Gaussian random variables, which are also independent of all $g_i$'s. Thus,
\begin{equation}\label{eq:repZ}
{n^{1/p}\over k_n^{1/2}}\|P_{E_n}X_n\|_2 \overset{d}{=} {\big({1\over k_n}\sum\limits_{i=1}^{k_n}g_i^2\big)^{1/2}\over\big({1\over n}\sum\limits_{i=1}^{n}g_i^2\big)^{1/2}}{\big({1\over n}\sum\limits_{i=1}^{n}|Z_i|^2\big)^{1/2}\over\big({1\over n}\sum\limits_{i=1}^{n}|Z_i|^p+{W\over n}\big)^{1/p}}.
\end{equation}

\subsection{Proof of Theorem \ref{thm:CriticalScalegen} for $2\leq p<\infty$}

As a consequence of Cram\'er's theorem (Lemma \ref{lem:cramer}) the sequence of random variables ${1\over k_n}\sum\limits_{i=1}^{k_n}g_i^2$ satisfies an LDP with speed $k_n$ and rate function
$$
\rate_G(x) = \begin{cases}
{x-1\over 2}-{1\over 2}\log x &: x>0\\
+\infty &: \text{otherwise},
\end{cases}
$$
compare with \cite[Lemma 5.4]{APT2018}.
Consequently, by the contraction principle (Lemma \ref{lem:refinement contraction principle}) the sequence of random variables $\big({{M_p(2)}\over k_n}\sum\limits_{i=1}^{k_n}g_i^2\big)^{1/2}$ satisfies an LDP with the same speed and rate function
\begin{align}\label{eq:RateFunctionG}
\rate_1(x) = \rate_G\left(\frac{x^2}{M_p(2)}\right) = \begin{cases}
{\frac{x^2}{M_p(2)}-1\over 2}-\log \frac{x}{\sqrt{M_p(2)}} &: x>0\\
+\infty &: \text{otherwise},
\end{cases}
\end{align}
which is the rate function from the statement of the theorem. In the same way, the sequence $\left(\frac{1}{n}\sum_{i=1}^{n}|Z_i|^p\right)^{1/p}$ satisfies an LDP with speed $n$ and rate function
$$
\rate_2(x)=\begin{cases}
\frac{x^p-1}{p}-\log x &: x>0\cr
+\infty &:\textrm{otherwise}.\cr
\end{cases}
$$
The rate functions were explicitly given by \cite[Lemma 5.4]{APT2018}. However, the authors realised that there was an overlooked misprint in the rate function $\rate_2(x)$ given there, so we explicitly write the rate function here.

What remains to show is that the sequences of random variables $k_n^{-1/2}\mathscr{Z}_{n,p}$ and $\big({{M_p(2)}\over k_n}\sum\limits_{i=1}^{k_n}g_i^2\big)^{1/2}$ are exponentially equivalent (recall Lemma \ref{prop:exponentially equivalent}). For this, we fix $\delta,\varepsilon>0$ and write
\begin{align*}
&\Pro\Bigg[\Big({{M_p(2)}\over k_n}\sum\limits_{i=1}^{k_n}g_i^2\Big)^{1/2}\Bigg|1-{\big({1\over n}\sum\limits_{i=1}^n{\frac{Z_i^2}{M_p(2)}}\big)^{1/2}\over \big({1\over n}\sum\limits_{i=1}^{n}g_i^2\big)^{1/2}\big({1\over n}\sum\limits_{i=1}^{n}|Z_i|^p+{W_n\over n}\big)^{1/p}}\Bigg|>\delta\Bigg]\\
&\leq\Pro\bigg[\Big({{M_p(2)}\over k_n}\sum\limits_{i=1}^{k_n}g_i^2\Big)^{1/2}>{\delta\over\varepsilon}\bigg] + \Pro\Bigg[1-{\big({1\over n}\sum\limits_{i=1}^n{\frac{Z_i^2}{M_p(2)}}\big)^{1/2}\over \big({1\over n}\sum\limits_{i=1}^{n}g_i^2\big)^{1/2}\big({1\over n}\sum\limits_{i=1}^{n}|Z_i|^p+{W_n\over n}\big)^{1/p}}>\varepsilon\Bigg]\\
&\qquad\qquad + \Pro\Bigg[1-{\big({1\over n}\sum\limits_{i=1}^n{\frac{Z_i^2}{M_p(2)}}\big)^{1/2}\over \big({1\over n}\sum\limits_{i=1}^{n}g_i^2\big)^{1/2}\big({1\over n}\sum\limits_{i=1}^{n}|Z_i|^p+{W_n\over n}\big)^{1/p}}<-\varepsilon\Bigg] =: T_1 + T_2 + T_3.
\end{align*}
We further analyse the term $T_2$ and observe that, {for every $\varepsilon_1>0$},
\begin{align*}
T_2 &\leq \Pro\Big[\big({1\over n}\sum_{i=1}^n{\frac{Z_i^2}{M_p(2)}}\big)^{1/2}<(1-\varepsilon)^{1/3}\Big] + \Pro\Big[\big({1\over n}\sum_{i=1}^ng_i^2\big)^{1/2}>(1-\varepsilon)^{-1/3}\Big]\\
&\qquad + \Pro\Big[{1\over n}\sum_{i=1}^n|Z_i|^p>(1-\varepsilon)^{-p/3}-\varepsilon_1\Big] + \Pro\Big[{W_n\over n}>\varepsilon_1\Big].
\end{align*}
By Cram\'er's theorem (see Lemma \ref{lem:cramer}), the sequences of random variables ${1\over n}\sum_{i=1}^n{\frac{Z_i^2}{M_p(2)}}$, ${1\over n}\sum_{i=1}^ng_i^2$, and ${1\over n}\sum_{i=1}^n|Z_i|^p$ satisfy LDPs with speed $n$ and rate functions only vanishing at $1$. %which do not vanish on $(-\infty,(1-\varepsilon)^{2/3})$, $((1-\varepsilon)^{-2/3},\infty)$ and $((1-\varepsilon)^{-p/3}/2,\infty)$, respectively\textcolor{red}{Is it clear for the first sum? We don't have an explicit expression in that case if $p\geq2$}% (the rate functions are explicitly given by \cite[Lemma 5.4]{APT2018}).
Taking $\varepsilon_1$ such that $(1-\varepsilon)^{-p/3}-\varepsilon_1>1$ and taking into account that, by assumption also the sequence of random variables $W_n/n$ satisfies an LDP with speed $n$ and a rate function that does not vanish on $(\varepsilon_1,\infty)$ we have that the term $T_2$ decays exponentially with speed $n$ and hence satisfies
$$
\limsup_{n\to\infty}{1\over k_n}\log T_2 = -\infty,
$$
since $k_n=o(n)$ by assumption. The same argument also yields that
$$
\limsup_{n\to\infty}{1\over k_n}\log T_3 = -\infty,
$$
which in turn leads to
\begin{align*}
&\limsup_{n\to\infty}{{M_p(2)}\over k_n}\log\Pro\Bigg[\Big({{M_p(2)}\over k_n}\sum\limits_{i=1}^{k_n}g_i^2\Big)^{1/2}\Bigg|1-{\big({1\over n}\sum\limits_{i=1}^n{\frac{Z_i^2}{M_p(2)}}\big)^{1/2}\over \big({1\over n}\sum\limits_{i=1}^{n}g_i^2\big)^{1/2}\big({1\over n}\sum\limits_{i=1}^{n}|Z_i|^p+{W_n\over n}\big)^{1/p}}\Bigg|>\delta\Bigg]\\
&\leq \limsup_{n\to\infty}{1\over k_n}\log T_1 = -\rate_1\Big({\delta\over\varepsilon}\Big) = -{{\delta^2\over{\varepsilon^2M_p(2)}}+1\over 2}-\log{\delta\over{\varepsilon\sqrt{M_p(2)}}}
\end{align*}
with $\rate_1$ given by \eqref{eq:RateFunctionG}.
As $\varepsilon\to 0$ the last expression tends to $-\infty$ for any $\delta>0$. This shows the desired exponential equivalence. \hfill $\Box$

% % % % % % % % % % % % % % % % % % % % % % % % % % % % % % % % % % % % %
\subsection{Proof of Theorem \ref{thm:CriticalScalegen} for $1\leq p<2$}
% % % % % % % % % % % % % % % % % % % % % % % % % % % % % % % % % % % % %

The idea of the proof of Theorem \ref{thm:CriticalScalegen} for $1\leq p<2$ is similar to that for $2\leq p<\infty$: identify the dominating term(s) in the probabilistic representation \eqref{eq:repZ} and show exponential equivalence of $k_n^{-1/2}\|\mathscr{Z}_{n,p}\|_2$ to them. However, the analysis in the case where $1\leq p<2$ is a bit more subtle, since depending on the growth of $k_n$ relative to $n^{p/2}$ different terms can take over the dominating role. Moreover, if $k_n=n^{p/2}$ there is more than one such term.

\subsubsection{The case $k_n=o(n^{p/2})$} This situation is similar to that where $2\leq p<\infty$. 
%In fact, the proof can almost literally be repeated. 
%The only change is that 
But now the sequence of random variables $\big({1\over n}\sum_{i=1}^nZ_i^2\big)^{1/2}$ satisfies an LDP with speed $n^{p/2}$ instead of $n$ (see \cite[Equation (5)]{APT2018}). This slowdown of the speed is due to the fact that for $1\leq p<2$ the squares of $p$-generalized Gaussian random variables display heavier tails and do not have finite exponential moments. However, since the subspace dimensions satisfy $k_n=o(n^{p/2})$, the proof still works in this situation.\hfill $\Box$

\subsubsection{The case $k_n=n^{p/2}$} To deal with this case we start by observing that by Lemma \ref{JointRateFunction} the sequence of random vectors
$$
\bigg(\Big({1\over k_n}\sum_{i=1}^{k_n}g_i^2\Big)^{1/2},\Big({1\over n}\sum_{i=1}^{n}Z_i^2\Big)^{1/2}\bigg)
$$
satisfies an LDP on $\R^2$ with speed $n^{p/2}$ and rate function $\rate_{G+Z}(x,y)=\rate_G(x)+\rate_Z(y)$. Here,
$$
\rate_G(x) = \begin{cases}
{x^2-1\over 2}-\log x &: x>0\\
+\infty &: \text{otherwise}
\end{cases} \qquad \text{and}\qquad
\rate_Z(y) = \begin{cases}
{1\over p}(y^2-m_p)^{p/2} &: y>m_p\\
+\infty &: \text{otherwise},
\end{cases}
$$
where the constant $m_p$ is given in Theorem \ref{thm:CriticalScale} and the precise form of the LDPs for the individual sequences follow from \cite[Lemma 5.4 and Equation (5)]{APT2018}. By the contraction principle (Lemma \ref{lem:refinement contraction principle}) this implies that the sequence of random variables $\big({1\over k_n}\sum_{i=1}^{k_n}g_i^2\big)^{1/2}\cdot\big({1\over n}\sum_{i=1}^{n}Z_i^2\big)^{1/2}$ satisfies an LDP with speed $n^{p/2}$ and rate function $\rate$ as given in the statement of Theorem \ref{thm:CriticalScale} (b2). It remains to prove that the sequences of random variables $\big({1\over k_n}\sum_{i=1}^{k_n}g_i^2\big)^{1/2}\cdot\big({1\over n}\sum_{i=1}^{n}Z_i^2\big)^{1/2}$ and $n^{-p/2}\|\mathscr{Z}_{n,p}\|$ are exponentially equivalent (recall Lemma \ref{prop:exponentially equivalent}). To show this, fix $\delta,\varepsilon>0$ and observe that
\begin{align*}
&\Pro\Bigg[\big({1\over k_n}\sum_{i=1}^{k_n}g_i^2\big)^{1/2}\big({1\over n}\sum_{i=1}^{n}Z_i^2\big)^{1/2}\Bigg|1-{1\over\big({1\over n}\sum_{i=1}^ng_i^2\big)^{1/2}\big({1\over n}\sum_{i=1}^n|Z_i|^p+{W_n\over n}\big)^{1/p}}\Bigg|>\delta\Bigg]\\
&\leq \Pro\bigg[\big({1\over k_n}\sum_{i=1}^{k_n}g_i^2\big)^{1/2}\big({1\over n}\sum_{i=1}^{n}Z_i^2\big)^{1/2}>{\delta\over\varepsilon}\bigg] + \Pro\Bigg[1-{1\over\big({1\over n}\sum_{i=1}^ng_i^2\big)^{1/2}\big({1\over n}\sum_{i=1}^n|Z_i|^p+{W_n\over n}\big)^{1/p}}>\varepsilon\Bigg]\\
&\qquad\qquad+\Pro\Bigg[1-{1\over\big({1\over n}\sum_{i=1}^ng_i^2\big)^{1/2}\big({1\over n}\sum_{i=1}^n|Z_i|^p+{W_n\over n}\big)^{1/p}}<-\varepsilon\Bigg] =: T_1+T_2+T_3.
\end{align*}
Now, for $T_2$ we obtain that, for any $\varepsilon_1>0$,
\begin{align*}
T_2 \leq \Pro\Big[\big({1\over n}\sum_{i=1}^ng_i^2\big)^{1/2}>(1-\varepsilon)^{-1/2}\Big]+\Pro\Big[{1\over n}\sum_{i=1}^n|Z_i|^p>(1-\varepsilon)^{-p/2}-\varepsilon_1\Big] + \Pro\Big[{W_n\over n}>\varepsilon_1\Big].
\end{align*}
By Cram\'er's theorem (Lemma \ref{lem:cramer}), the sequences of random variables ${1\over n}\sum_{i=1}^ng_i^2$ and ${1\over n}\sum_{i=1}^n|Z_i|^p$ satisfy an LDP with speed $n$, and, by assumption, $W_n/n$ also satisfies an LDP with speed $n$. Moreover, taking $\varepsilon_1$ small enough so that $(1-\varepsilon)^{-p/2}-\varepsilon_1>1$, the corresponding rate functions do not vanish on $((1-\varepsilon)^{-1/2},\infty)$, $((1-\varepsilon)^{-p/2}-\varepsilon_1,\infty)$, and $(\varepsilon_1,\infty)$. Since a similar argument applies to $T_3$, we conclude that
\begin{align*}
\limsup_{n\to\infty}{1\over n^{p/2}}\log T_2 \leq -\infty\qquad\text{and}\qquad\limsup_{n\to\infty}{1\over n^{p/2}}\log T_3 \leq -\infty.
\end{align*}
Thus,
\begin{align*}
&\limsup_{n\to\infty}{1\over n^{p/2}}\log\Pro\bigg[\big({1\over k_n}\sum_{i=1}^{k_n}g_i^2\big)^{1/2}\big({1\over n}\sum_{i=1}^{n}Z_i^2\big)^{1/2}\Big|1-{1\over\big({1\over n}\sum_{i=1}^ng_i^2\big)^{1/2}\big({1\over n}\sum_{i=1}^n|Z_i|^p+{W_n\over n}\big)^{1/p}}\Big|>\delta\bigg]\\
&\leq \limsup_{n\to\infty}{1\over n^{p/2}}\log T_1 = -\rate\Big({\delta\over\varepsilon}\Big),
\end{align*}
where $\rate$ is the rate function from the statement of Theorem \ref{thm:CriticalScale} (b2).
Taking into account the explicit form of $\rate$, we conclude that $\rate(\delta/\varepsilon)\to\infty$, as $\varepsilon\to 0$. This proves the result for the case that ${k_n}=n^{p/2}$.\hfill $\Box$

\subsubsection{The case $k_n=\omega(n^{p/2})$} In this case we claim that the sequence of random variables $\big({1\over n}\sum_{i=1}^nZ_i^2\big)^{1/2}$ is exponentially equivalent to $k_n^{-1/2}\mathscr{Z}_{n,p}$. In fact, for $\delta,\varepsilon>0$ we can write
\begin{align*}
&\Pro\Bigg[\Big({1\over n}\sum\limits_{i=1}^{n}Z_i^2\Big)^{1/2}\Bigg|1-{\big({1\over k_n}\sum\limits_{i=1}^{k_n}g_i^2\big)^{1/2}\over \big({1\over n}\sum\limits_{i=1}^{n}g_i^2\big)^{1/2}\big({1\over n}\sum\limits_{i=1}^{n}|Z_i|^p+{W_n\over n}\big)^{1/p}}\Bigg|>\delta\Bigg]\\
&\leq\Pro\Big[\Big({1\over n}\sum\limits_{i=1}^{n}Z_i^2\Big)^{1/2}>{\delta\over\varepsilon}\Big] + \Pro\Bigg[1-{\big({1\over k_n}\sum\limits_{i=1}^{k_n}g_i^2\big)^{1/2}\over \big({1\over n}\sum\limits_{i=1}^{n}g_i^2\big)^{1/2}\big({1\over n}\sum\limits_{i=1}^{n}|Z_i|^p+{W_n\over n}\big)^{1/p}}>\varepsilon\Bigg]\\
&\qquad\qquad + \Pro\Bigg[1-{\big({1\over k_n}\sum\limits_{i=1}^{k_n}g_i^2\big)^{1/2}\over \big({1\over n}\sum\limits_{i=1}^{n}g_i^2\big)^{1/2}\big({1\over n}\sum\limits_{i=1}^{n}|Z_i|^p+{W_n\over n}\big)^{1/p}}<-\varepsilon\Bigg] =: T_1 + T_2 + T_3,
\end{align*}
and for $T_2$, we obtain that, for any $\varepsilon_1>0$,
\begin{align*}
T_2 &\leq \Pro\Big[{1\over k_n}\sum_{i=1}^{k_n}g_i^2<(1-\varepsilon)^{2/3}\Big] + \Pro\Big[{1\over n}\sum_{i=1}^ng_i^2>(1-\varepsilon)^{-2/3}\Big]\\
&\qquad + \Pro\Big[{1\over n}\sum_{i=1}^n|Z_i|^p>(1-\varepsilon)^{-p/3}-\varepsilon_1\Big] + \Pro\Big[{W_n\over n}>\varepsilon_1\Big].
\end{align*}
Once again, by Cram\'er's theorem (Lemma \ref{lem:cramer}), the sequences of random variables ${1\over n}\sum_{i=1}^ng_i^2$ and ${1\over n}\sum_{i=1}^n|Z_i|^p$ satisfy an LDP with speed $n$, and, by assumption, $W_n/n$ also satisfies an LDP with speed $n$. Again by Cram\'er's theorem (Lemma \ref{lem:cramer}), the sequence of random variables ${1\over k_n}\sum_{i=1}^{k_n}g_i^2$ satisfies an LDP with speed $k_n$. Also and as already discussed above, taking $\varepsilon_1$ small enough so that $(1-\varepsilon)^{-p/3}-\varepsilon_1>1$ , the corresponding rate functions do not vanish on $(-\infty, (1-\varepsilon)^{2/3}$, $((1-\varepsilon)^{-2/3},\infty)$, $((1-\varepsilon)^{-p/3}-\varepsilon_1,\infty)$, and $(\varepsilon_1,\infty)$. This proves that for suitable constants $c_1,c_2>0$,
\begin{align*}
\limsup_{n\to\infty}{1\over n^{p/2}}\log T_2 \leq -\limsup_{n\to\infty}\Big({c_1k_n\over n^{p/2}}+{c_2n\over n^{p/2}}\Big) = -\infty,
\end{align*}
since $k_n=\omega(n^{p/2})$ by assumption and $1\leq p<2$. Similarly, one has that
$$
\limsup_{n\to\infty}{1\over n^{p/2}}\log T_3 = -\infty
$$
and hence
\begin{align*}
&\limsup_{n\to\infty}{1\over n^{p/2}}\log\Pro\Bigg[\Big({1\over n}\sum\limits_{i=1}^{n}Z_i^2\Big)^{1/2}\Bigg|1-{\big({1\over k_n}\sum\limits_{i=1}^{k_n}g_i^2\big)^{1/2}\over \big({1\over n}\sum\limits_{i=1}^{n}g_i^2\big)^{1/2}\big({1\over n}\sum\limits_{i=1}^{n}|Z_i|^p+{W_n\over n}\big)^{1/p}}\Bigg|>\delta\Bigg]\\
&\leq \limsup_{n\to\infty}{1\over n^{p/2}}\log T_1 = -\rate\Big({\delta\over\varepsilon}\Big),
\end{align*}
where $\rate$ is the rate function of the LDP for the sequence of random variables $\big({1\over n}\sum_{i=1}^nZ_i^2\big)^{1/2}$, which holds at speed $n^{p/2}$. We have that
$$
\rate(x) = \begin{cases}
{1\over p}(x^2-m_p)^{p/2} &: x>\sqrt{m_p}\\
+\infty &: \text{otherwise}
\end{cases}
$$
with the constant $m_p$ as in Theorem \ref{thm:CriticalScale} according to \cite[Equation (5)]{APT2018}. In particular, this shows that $\rate(\delta/\varepsilon)\to\infty$, as $\varepsilon\to 0$, and proves the exponential equivalence of the sequences of random variables $\big({1\over n}\sum_{i=1}^nZ_i^2\big)^{1/2}$ and $k_n^{-1/2}\mathscr{Z}_{n,p}$. The proof of Theorem \ref{thm:CriticalScalegen} is thus complete.\hfill $\Box$

% % % % % % % % % % % % % % % % % % % % % %
\section{Proof of Theorem \ref{thm:MDP} and its generalization}
% % % % % % % % % % % % % % % % % % % % % %

Recall the notions and notation introduced before Theorem \ref{thm:MDP} and recall the definition of the probability measures $\bP_{\bW,n,p}$ on $\B_p^n$. The goal of this section is to prove the following result, which contains Theorem \ref{thm:MDP} as a special case.

\begin{thm}[MDP on subcritical scales, general version]\label{thm:MDPgen}
Let $2\leq p<\infty$ and $(k_n)_{n\in\N}$ be a sequence of integers such that $1\leq k_n\leq n$ and $\lim\limits_{n\to\infty}{k_n\over n}=\lambda\in[0,1]$. Assume that $k_n=\omega(1)$, and that the sequence $(t_n)_{n\in\N}$ of positive real numbers satisfies $t_n=\omega(1)$, $t_n=o(\sqrt{k_n})$, {and either $t_n=o(\sqrt{n-k_n})$ or $(n-k_n)=o\big(\frac{\sqrt{k_n}}{t_n}\big)$.} For each $n\in\N$ let $\bW_n$ be a probability measure on $\R$ such that
\begin{equation}\label{eq:ConditionW}
\lim_{n\to\infty}{t_n^{-2}}\log\bW_n\big((\delta t_n\sqrt{n},\infty)\big)=-\infty
\end{equation}
for every $\delta>0$. Further, let $X_n$ be a random point with distribution $\bP_{\bW_n,n,p}$ and $E_{n}$ be a uniformly distributed $k_n$-dimensional random subspace {of $\R^n$}. Assume independence of $X_n$ and $E_{n}$ and put
$$
\mathscr{X}_{n,p}:=n^{1/p}\sqrt{{\Gamma\big({1\over p}\big)\over p^{2/p}\Gamma\big({3\over p}\big)}}\,\|P_{E_n}X_n\|_2-\sqrt{k_n}.
$$
Then the sequence of random variables $t_n^{-1}\mathscr X_{n,p}$ satisfies an MDP on $\R$ with speed $t_n^2$ and rate function $\rate:\R\to[0,\infty)$, $\rate(x)=\alpha_{p,\lambda}x^2$ with $\alpha_{p,\lambda}$ given by \eqref{eq:ConstantAlpha}.
\end{thm}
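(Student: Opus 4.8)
The plan is to reduce the statement, via the probabilistic representation of \cite[Proposition~2.7]{APT2019}, to an MDP for a linear functional of a vector of independent normalized i.i.d.\ sums, to which Lemma~\ref{lem:MDPVector} applies. Since $M_p(2)=p^{2/p}\Gamma(3/p)/\Gamma(1/p)$ by \eqref{eq:Mpq}, one has $\sqrt{\Gamma(1/p)/(p^{2/p}\Gamma(3/p))}=1/\sqrt{M_p(2)}$, so the representation yields
$$
t_n^{-1}\mathscr X_{n,p}\ \overset{d}{=}\ \frac{\sqrt{k_n}}{t_n}\left(\frac{\big(\tfrac{1}{k_n}\sum_{i=1}^{k_n}g_i^2\big)^{1/2}\big(\tfrac{1}{nM_p(2)}\sum_{i=1}^{n}|Z_i|^2\big)^{1/2}}{\big(\tfrac{1}{n}\sum_{i=1}^{n}g_i^2\big)^{1/2}\big(\tfrac{1}{n}\sum_{i=1}^{n}|Z_i|^p+\tfrac{W_n}{n}\big)^{1/p}}-1\right).
$$
Set $S_n=\sum_{i=1}^{k_n}(g_i^2-1)$, $R_n=\sum_{i=k_n+1}^{n}(g_i^2-1)$ and $T_n=\sum_{i=1}^{n}\big(|Z_i|^2-M_p(2),\,|Z_i|^p-1\big)\in\R^2$; since $\E[g_1^2]=1$ and $\E|Z_1|^p=M_p(p)=1$ by \eqref{eq:Moments}, all three are centered, and $S_n$, $R_n$, $T_n$ are independent. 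The restriction $p\geq2$ enters only here: it guarantees that $|Z_1|^2$ has a finite exponential moment, and for $p>2$ that $\Sigma_Z:=\Cov(|Z_1|^2,|Z_1|^p)$ is invertible (the degenerate case $p=2$, where $|Z_i|^p=|Z_i|^2$ and $M_2(2)=1$, is commented on at the end).

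\emph{Step 1 (joint MDP for the building blocks).} Using $t_n=\omega(1)$, $t_n=o(\sqrt{k_n})$ (hence $t_n=o(\sqrt n)$, since $k_n\leq n$), Lemma~\ref{lem:MDPVector} applies to each of $\tfrac{S_n}{t_n\sqrt{k_n}}$ and $\tfrac{T_n}{t_n\sqrt n}$ and, under the assumption $t_n=o(\sqrt{n-k_n})$, also to $\tfrac{R_n}{t_n\sqrt{n-k_n}}$. As an MDP is an LDP, Lemma~\ref{JointRateFunction} (applied twice, using the independence above) then gives a joint MDP on $\R^4$ with speed $t_n^2$ for $G_n:=\big(\tfrac{S_n}{t_n\sqrt{k_n}},\,\tfrac{R_n}{t_n\sqrt{n-k_n}},\,\tfrac{T_n}{t_n\sqrt n}\big)$, with quadratic rate function $\rate_G(s,r,z)=\tfrac{s^2}{4}+\tfrac{r^2}{4}+\tfrac12\langle z,\Sigma_Z^{-1}z\rangle$ (as $\Var(g_1^2)=2$). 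In the complementary regime $(n-k_n)=o\big(\sqrt{k_n}/t_n\big)$ no MDP for $R_n$ is available; there I would drop the $r$-coordinate and verify separately, via the Cram\'er bound for the $(n-k_n)$-fold sum $R_n$, that $\tfrac{\sqrt{k_n}}{t_n n}R_n$ is exponentially negligible at speed $t_n^2$, exploiting that $\tfrac{t_n n}{\sqrt{k_n}}\gg n-k_n$ so that the relevant tail lies in the regime where the per-summand rate grows linearly.

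\emph{Step 2 (linearization and contraction).} Working on the event that the four empirical averages above lie in a fixed compact neighborhood of $1$ (whose complement is exponentially negligible at speed $t_n^2$ by Cram\'er's theorem, Lemma~\ref{lem:cramer}, since $k_n=\omega(1)$ and $n\to\infty$), I first remove $W_n/n$: replacing it by $0$ changes $t_n^{-1}\mathscr X_{n,p}$ by at most a constant times $\tfrac{\sqrt{k_n}}{t_n n}W_n$, which is exponentially negligible at speed $t_n^2$ by \eqref{eq:ConditionW} because $\tfrac{t_n n}{\sqrt{k_n}}\geq t_n\sqrt n$. A first-order Taylor expansion of $\Phi(a,b,c,d)=(1+a)^{1/2}(1+c)^{1/2}(1+b)^{-1/2}(1+d)^{-1/p}$ at the origin, combined with the identities $a_n=\tfrac{t_n}{\sqrt{k_n}}(G_n)_1$, $b_n=\tfrac{t_n\sqrt{k_n}}{n}(G_n)_1+\tfrac{t_n\sqrt{n-k_n}}{n}(G_n)_2$, $c_n=\tfrac{t_n}{\sqrt n M_p(2)}(G_n)_3$, $d_n=\tfrac{t_n}{\sqrt n}(G_n)_4$, writes (after the $W_n$-removal) $t_n^{-1}\mathscr X_{n,p}=F_n(G_n)$ with $F_n\to F$ uniformly on compact sets, where
$$
F(s,r,z_1,z_2)=\tfrac{1-\lambda}{2}\,s-\tfrac{\sqrt{\lambda(1-\lambda)}}{2}\,r+\tfrac{\sqrt\lambda}{2M_p(2)}\,z_1-\tfrac{\sqrt\lambda}{p}\,z_2,
$$
the quadratic remainder contributing only a term of order $\tfrac{\sqrt{k_n}}{t_n}\cdot\tfrac{t_n^2}{k_n}=\tfrac{t_n}{\sqrt{k_n}}\to0$ on compacts. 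By Lemma~\ref{lem:refinement contraction principle} (its hypothesis following from this uniform convergence together with the exponential tightness of $(G_n)$, which holds because $\rate_G$ has compact sublevel sets) and Lemma~\ref{prop:exponentially equivalent}, the sequence $t_n^{-1}\mathscr X_{n,p}$ satisfies an MDP with speed $t_n^2$ and rate function $\rate(x)=\inf\{\rate_G(v):F(v)=x\}=\tfrac{x^2}{2\langle\ell,\mathbf{C}\ell\rangle}$, where $\ell$ is the coefficient vector of $F$ and $\mathbf{C}=\mathrm{diag}(2,2,\Sigma_Z)$. A direct computation from \eqref{eq:Moments}, using $M_p(p)=1$, $M_p(p+2)=3M_p(2)$, $M_p(2p)=p+1$, and $M_p(4)/M_p(2)^2=\Gamma(1/p)\Gamma(5/p)/\Gamma(3/p)^2$, then gives $\langle\ell,\mathbf{C}\ell\rangle=\tfrac{1-\lambda}{2}+\lambda\big(\tfrac{\Gamma(1/p)\Gamma(5/p)}{4\Gamma(3/p)^2}-\tfrac14-\tfrac1p\big)=\tfrac{1}{2\alpha_{p,\lambda}}$ with $\alpha_{p,\lambda}$ as in \eqref{eq:ConstantAlpha}, which also confirms Remark~\ref{rem:intro}(b). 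For $p=2$ the factor $\big(\tfrac1n\sum|Z_i|^2\big)^{1/2}/\big(\tfrac1n\sum|Z_i|^p+\tfrac{W_n}{n}\big)^{1/2}$ equals $(1+\tfrac{W_n/n}{\frac1n\sum Z_i^2})^{-1/2}$ and is exponentially equivalent to $1$, so the same argument applies with $G_n$ reduced to its first two coordinates and gives $\alpha_{2,\lambda}=1/(1-\lambda)$, consistent with \eqref{eq:ConstantAlpha}.

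The main obstacle is Step 2: making the informal Taylor expansion into a rigorous exponential-equivalence statement uniformly over the subspace regime $\lambda\in[0,1]$. One must simultaneously confine the four empirical averages to a compact set where the expansion is valid (paying only a super-polynomially small price at speed $t_n^2$), control the quadratic remainder after multiplication by the large factor $\sqrt{k_n}/t_n$, and dispose of the $W_n$- and $R_n$-contributions, the delicate point being the sub-case $(n-k_n)=o(\sqrt{k_n}/t_n)$ in which $R_n$ admits no MDP at speed $t_n^2$ and must be handled by a sharp Cram\'er-type tail estimate. By contrast, Step 1 is a routine application of the quoted lemmas, and propagating the quadratic rate function through $F$ while matching the constant to \eqref{eq:ConstantAlpha} is a straightforward, if lengthy, calculation.
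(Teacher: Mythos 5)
Your proposal is correct and follows essentially the same route as the paper: the same probabilistic representation, linearization of the ratio with a quadratically controlled remainder (the paper imports this as the $\Psi_p$-representation from the CLT proof in \cite{APT2019} rather than redoing the Taylor expansion), a joint MDP for $\big(\zeta_1^{(n)},\zeta_2^{(n)},\xi_{p,2}^{(n)},\xi_{p,p}^{(n)}\big)$ via Lemma \ref{lem:MDPVector} and Lemma \ref{JointRateFunction}, contraction along the limiting linear functional, exponential equivalence to dispose of the $W_n$-term and the remainder, and a direct Cram\'er/Gaussian tail bound for the $\zeta_2^{(n)}$-contribution in the regime $(n-k_n)=o(\sqrt{k_n}/t_n)$, with the constant $\alpha_{p,\lambda}$ obtained from the quadratic-form contraction formula instead of the paper's Lagrange-multiplier computation — these are equivalent, and your value matches \eqref{eq:ConstantAlpha}. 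Your separate treatment of $p=2$ (where $\Cov(|Z_1|^2,|Z_1|^p)$ is singular, so the bivariate step degenerates) is a careful touch the paper glosses over, but it does not change the method.
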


% % % % % % % % % % % % % % % % % % % % % %
\subsection{Probabilistic representation}
% % % % % % % % % % % % % % % % % % % % % %

As for the proof of the central limit theorem in \cite{APT2019} and the large deviations principle in \cite{APT2018}, a suitable probabilistic representation for the target random variable $t_n^{-1}\mathscr X_{n,p}$ in terms of families of independent random variables turns out to be one of the most crucial ingredients also in our proof of the moderate deviations principle. In the context of the MDP, it is in fact the representation used in the proof of the central limit theorem, which is most suitable and which we develop further in the present text. To ease comparison we adopt the notation from \cite{APT2019} and recall from the proof of \cite[Theorem 1.1]{APT2019} that for each $n\in\N$, we have
\begin{align*}
\frac{\mathscr X_{n,p}}{t_n} \,\stackrel{d}{=}\, &\frac{\sqrt{\lambda_n}}{2M_p(2)}\,\xi^{(n)}_{p,2} - \frac{\sqrt{\lambda_n}}{p}\,\xi^{(n)}_{p,p} + \frac{1-\lambda_n}{2}\zeta^{(n)}_1-\frac{\sqrt{\lambda_n(1-\lambda_n)}}{2}\,\zeta^{(n)}_2- \sqrt{\lambda_n}\,\frac{W_n}{pt_n\sqrt{n}} \\
&\qquad\qquad\qquad\qquad\qquad\qquad\qquad\qquad\qquad\qquad + \frac{\sqrt{\lambda_n}}{t_n\sqrt{n}}\, \Psi_p\left(\frac{t_n\xi^{(n)}_{p,2}}{ \sqrt{n}},\frac{t_n\xi^{(n)}_{p,p}}{\sqrt{n}},\frac{t_n\zeta^{(n)}_1}{\sqrt{k_n}},\frac{t_n\zeta^{(n)}_3}{\sqrt{n}},\frac{W_n}{n}\right),
\end{align*}
where the random variable $W_n$ has distribution $\bW_n$,
$$\begin{array}{ll}
\xi^{(n)}_{p,2}:=\frac{1}{t_n\sqrt{n}}\sum\limits_{i=1}^n\big(Z_i^2-M_p(2)\big),
&\xi^{(n)}_{p,p} :=\frac{1}{t_n\sqrt{n}}\sum\limits_{i=1}^n\big(|Z_i|^p-1\big),\\
\zeta^{(n)}_1 :=\frac{1}{t_n\sqrt{k_n}}\sum\limits_{i=1}^{k_n}\big(g_i^2-1\big),&\zeta^{(n)}_2 :=\frac{1}{t_n\sqrt{n-k_n}}\sum\limits_{i=k_n+1}^n\big(g_i^2-1\big),\\
\zeta_3^{(n)}:=\sqrt{\lambda_n}\,\zeta^{(n)}_1+\sqrt{1-\lambda_n}\,\zeta^{(n)}_2=\frac{1}{t_n\sqrt{n}}\sum_{i=1}^{n}\big(g_i^2-1\big),
\end{array}$$
and where $\Psi_p:\R^5\to\R $ is a function with the property that there are two constants $M,\delta\in(0,\infty)$ such that $|\Psi_p({\bf x})|\leq M\,\Vert {\bf x}\Vert_2^2$ whenever $\Vert {\bf x}\Vert_2<\delta$. %For later purposes let us also define
%$$
%\zeta_3^{(n)}:=\sqrt{\lambda_n}\,\zeta^{(n)}_1+\sqrt{1-\lambda_n}\,\zeta^{(n)}_2=\frac{1}{t_n\sqrt{n}}\sum_{i=1}^{n}\big(g_i^2-1\big),\qquad n\in\N.
%$$
At this point we shall remind the reader of our convention that $(Z_i)_{i\in\N}$ stands for a sequence of independent $p$-generalized Gaussian random variables and $(g_i)_{i\in\N}$ for a sequence of independent standard Gaussian random variables and that both sequences are assumed to be independent of each other. In addition, it is understood that the random variables $W_n$ are also independent of all $g_i$'s and $Z_i$'s. We also emphasize that a notation involving the symbol $\xi$ refers to random variables involving the $p$-generalized Gaussian random variables $Z_i$ only, while the symbol $\zeta$ refers to random elements only based on the standard Gaussians $g_i$.

Finally, we shall denote by $\mathscr Y_{n,p}$ the random variable given by
$$
\mathscr Y_{n,p}:=\frac{\sqrt{\lambda_n}}{2M_p(2)}\,\xi^{(n)}_{p,2} - \frac{\sqrt{\lambda_n}}{p}\,\xi^{(n)}_{p,p} + \frac{1-\lambda_n}{2}\zeta^{(n)}_1-\frac{\sqrt{\lambda_n(1-\lambda_n)}}{2}\,\zeta^{(n)}_2,\qquad n\in\N.
$$
Moreover, we define
$$
\widetilde{\mathscr{Y}}_{n,p}:=\frac{\sqrt{\lambda_n}}{2M_p(2)}\,\xi^{(n)}_{p,2} - \frac{\sqrt{\lambda_n}}{p}\,\xi^{(n)}_{p,p} + \frac{1-\lambda_n}{2}\zeta^{(n)}_1,\qquad n\in\N.
$$

% % % % % % % % % % % % % % % % % % % % % % % % %
\subsection{MDP for $p$-generalized Gaussian random variables}
% % % % % % % % % % % % % % % % % % % % % % % % %

%Let us denote, for $p\geq 1$, by $(Z_i)_{i\in\N}$ a sequence of independent copies of a $p$-generalized Gaussian random variable.
Specializing Lemma \ref{lem:MDPVector} for $d=1$ to normalized sums of $p$-generalized Gaussian random variables leads to the following MDP.

\begin{lemma}[MDP for sums of powers of $p$-generalized Gaussians]\label{lem:MDP Z^2}
Let $2\leq q\leq  p < \infty$ and $(Z_i)_{i\in\N}$ be a sequence of independent copies of a $p$-generalized Gaussian random variable. Let $(t_n)_{n\in\N}$ be a sequence of positive real numbers such that {$t_n=\omega(1)$} and $t_n=o(\sqrt{n})$. Then the sequence of random variables
\[
\xi_{p,q}^{(n)}:=\frac{1}{t_n\sqrt{n}}\sum_{i=1}^n\big(|Z_i|^q-M_p(q)\big),\qquad n\in\N,
\]
satisfies an MDP on $\R$ with speed $t_n^2$ and rate function
\[
\mathscr I:\R\to[0,\infty),\qquad \mathscr I(x)= \frac{x^2}{2\Var[|Z_1|^q]}={x^2\over 2(M_p(2q)-M_p(q)^2)}.
\]
\end{lemma}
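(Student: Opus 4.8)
The plan is to deduce this lemma directly from the multidimensional MDP for sums of i.i.d.\ random vectors (Lemma \ref{lem:MDPVector}) by specializing to dimension $d=1$. First I would set $Y_i := |Z_i|^q - M_p(q)$ for $i\in\N$. By \eqref{eq:Moments} we have $\E|Z_1|^q = M_p(q)$, so each $Y_i$ is centered; moreover the $Y_i$ are i.i.d.\ since the $Z_i$ are. The variance of $Y_1$ is $\Var[|Z_1|^q] = M_p(2q)-M_p(q)^2$, which in the one-dimensional setting plays the role of the covariance matrix $\bC$; I would need to observe that it is strictly positive (and hence ``invertible'' as a $1\times 1$ matrix), which holds because $|Z_1|^q$ is genuinely non-degenerate — this is where the hypothesis $q\geq 2$ together with $p<\infty$ is invoked so that the relevant moments are finite and the random variable is non-constant.

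The second ingredient needed to apply Lemma \ref{lem:MDPVector} is the local finiteness of the moment generating function: I must check that $\log\E[e^{\theta Y_1}]<\infty$ for all $\theta$ in some neighbourhood of the origin. Since $Y_1 = |Z_1|^q - M_p(q)$ and $|Z_1|$ has density proportional to $e^{-|x|^p/p}$, we get $\E[e^{\theta |Z_1|^q}] = c_p\int_\R e^{\theta|x|^q - |x|^p/p}\,\mathrm{d}x$, and because $q\leq p$ the exponent $\theta|x|^q - |x|^p/p$ tends to $-\infty$ as $|x|\to\infty$ for every fixed $\theta\in\R$ (the $|x|^p$ term dominates), so the integral is finite for all real $\theta$, not merely near $0$. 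This is the step where the assumption $q\leq p$ is essential: if $q>p$ the exponential moment would fail. Subtracting the constant $M_p(q)$ only multiplies by $e^{-\theta M_p(q)}$ and does not affect finiteness.

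With these two verifications in hand, Lemma \ref{lem:MDPVector} applied with $d=1$, the i.i.d.\ sequence $(Y_i)_{i\in\N}$, and the given sequence $(t_n)_{n\in\N}$ satisfying $t_n=\omega(1)$ and $t_n=o(\sqrt n)$ yields that $\frac{1}{t_n\sqrt n}\sum_{i=1}^n Y_i = \xi_{p,q}^{(n)}$ satisfies an MDP on $\R$ with speed $t_n^2$ and rate function $I(x) = \frac{1}{2}\langle x, \bC^{-1}x\rangle = \frac{x^2}{2\Var[|Z_1|^q]}$, which by \eqref{eq:Moments} equals $\frac{x^2}{2(M_p(2q)-M_p(q)^2)}$, exactly the claimed $\mathscr I$. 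I do not anticipate a genuine obstacle here; the only point requiring any care is the moment generating function bound, and even that is routine once one notes the sign of the leading exponent. One might also remark that the lower semi-continuity and compact level sets of $\mathscr I$ are automatic since $\mathscr I$ is a positive-definite quadratic form.
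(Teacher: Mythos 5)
Your proposal is correct and takes essentially the same route as the paper: both simply specialize Lemma \ref{lem:MDPVector} to $d=1$ after noting that $|Z_1|^q-M_p(q)$ is centered with variance $M_p(2q)-M_p(q)^2$ and that the condition $q\leq p$ guarantees the required exponential moments. One minor overstatement: in the boundary case $q=p$ the exponent $\theta|x|^p-|x|^p/p$ does not tend to $-\infty$ for all real $\theta$ (only for $\theta<1/p$), so the moment generating function is not finite on all of $\R$; but since Lemma \ref{lem:MDPVector} only requires finiteness in a neighbourhood of the origin, this does not affect the validity of your argument.
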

\begin{proof}
Clearly, $\E[|Z_1|^q-M_p(q)]=M_p(q)-M_p(q)=0$ and, using the notation of Lemma \ref{lem:MDPVector}, $\bC^{-1}=1/\Var[|Z_1|^q]$ (in fact, $\bC$ is a $(1\times 1)$-matrix in our case). In addition, we have that $\Var[|Z_1|^q]=M_p(2q)-M_p(q)^2$ from \eqref{eq:Moments}. The result is now a direct consequence of Lemma \ref{lem:MDPVector}, since exponential moments exists by our assumption that $2\leq q\leq p$.

\end{proof}

\begin{rmk}\rm 
It is not strictly necessary to assume that $2\leq q$ in the previous lemma. However, we imposed this condition as we will apply the result for $q=2$, and it shows the reason why we need the condition $p\geq 2$ in Theorem \ref{thm:MDPgen}.
\end{rmk}
%In the same way we can prove the following result.
%
%\begin{lemma}\label{lem:MDP Z^p}
%Assume that $p\geq 2$ and let $(Z_i)_{i\in\N}$ be a sequence of independent copies of a $p$-generalized Gaussian random variable. Also, let $(b_n)_{n\in\N}$ be an increasing sequence of numbers such that $\sqrt{n}\ll b_n\ll n$. Then the sequence of random variables $\xi_2^{(n)}:=\frac{1}{b_n}\sum_{i=1}^n(|Z_i|^p-1)$  satisfies a MDP with speed $b_n^2/n$ and good rate function $I(x)=\frac{x^2}{2\Var |Z_1|^p}={x^2\over 2p}$, $x\in\R$.
%\end{lemma}
%\begin{proof}
%We start by noting that $\E[|Z_1|^p-1]=M_p(p)-1=0$ and $\Var|Z_1|^p=M_p(2p)-M_p(p)^2=p$, see \eqref{eq:Moments}. Also,
%\begin{align*}
%\E e^{\lambda(|Z_1|^p-1)} = {1\over c_p}\int_{-\infty}^\infty e^{\lambda(|x|^p-1)-|x|^p/p}\,\dint x <+\infty
%\end{align*}
%for any $\lambda\in(-\infty,1/p)$. The result thus follows from Lemma \ref{lem:MDPVector}.
%\end{proof}

% % % % % % % % % % % % % % % % % %
\subsection{MDP for Gaussian random variables}
% % % % % % % % % % % % % % % % % %

As a consequence of Lemma \ref{lem:MDP Z^2} obtained in the previous section, applied with $p=q=2$, we obtain the following MDP for sums of random vectors composed by Gaussian random variables.

\begin{cor}[MDP for sums of Gaussians]\label{cor:MDPGaussian}
Let $(g_i)_{i\in\N}$ be a sequence of standard Gaussian random variables. Then we have that if $k_n=\omega(1)$, and $(t_n)_{n\in\N}$ is a sequence of positive real numbers such that $t_n=\omega(1)$ and $t_n=o(\sqrt{k_n})$, the sequence of random variables $\zeta_1^{(n)}:=\frac{1}{t_n\sqrt{k_n}}\sum_{i=1}^{k_n}(g_i^2-1)$ follows an MDP on $\R$ with speed $t_n^2$ and rate function $\mathscr I(x)=\frac{x^2}{4}$, $x\in\R$ and, if $n-k_n=\omega(1)$ and $(t_n)_{n\in\N}$ is a sequence of positive real numbers such that $t_n=\omega(1)$ and $t_n=o(\sqrt{n-k_n})$, the sequence of random variables $\zeta_2^{(n)}:=\frac{1}{t_n\sqrt{n-k_n}}\sum_{i={k_n+1}}^{n}(g_i^2-1)$ follows an MDP on $\R$ with speed $t_n^2$ and rate function $\mathscr I(x)=\frac{x^2}{4}$, $x\in\R$. Consequently, if $k_n=\omega(1)$, $n-k_n=\omega(1)$, {$t_n=\omega(1)$}, $t_n=o(\sqrt{k_n})$, and  $t_n=o(\sqrt{n-k_n})$, the sequence of random vectors $\zeta^{(n)}:=(\zeta_1^{(n)},\zeta_2^{(n)})$ satisfies an MDP on $\R^2$ with speed $t_n^2$ and rate function $\mathscr I(x)=\frac{\| x\|_2^2}{4}$, $x\in\R^2$.
\end{cor}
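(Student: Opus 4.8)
The plan is to obtain the two one-dimensional statements as direct specializations of Lemma~\ref{lem:MDP Z^2}, and then to deduce the two-dimensional statement from an independence argument combined with Lemma~\ref{JointRateFunction}.

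First I would treat $\zeta_1^{(n)}$. Since a standard Gaussian random variable is exactly a $p$-generalized Gaussian random variable with $p=2$, Lemma~\ref{lem:MDP Z^2} applies with $p=q=2$, with the summation length $k_n$ playing the role of $n$ there; the hypotheses $k_n=\omega(1)$, $t_n=\omega(1)$ and $t_n=o(\sqrt{k_n})$ are precisely what is assumed. It remains to evaluate the variance entering the rate function. Using \eqref{eq:Mpq} one computes $M_2(2)=1$ and $M_2(4)=3$, so that $\Var[g_1^2]=M_2(4)-M_2(2)^2=2$, and hence the rate function is $\mathscr I(x)=x^2/(2\cdot 2)=x^2/4$. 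The statement for $\zeta_2^{(n)}$ follows in the same manner, now applying Lemma~\ref{lem:MDP Z^2} to the sum over the $n-k_n$ indices $k_n+1,\dots,n$ and invoking the hypotheses $n-k_n=\omega(1)$, $t_n=\omega(1)$ and $t_n=o(\sqrt{n-k_n})$.

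For the joint claim I would note that, under the combined hypotheses, both $\zeta_1^{(n)}$ and $\zeta_2^{(n)}$ satisfy MDPs, i.e.\ LDPs, on $\R$ with the common speed $t_n^2$ and rate function $x\mapsto x^2/4$. Moreover $\zeta_1^{(n)}$ is a function of $g_1,\dots,g_{k_n}$ while $\zeta_2^{(n)}$ is a function of $g_{k_n+1},\dots,g_n$, so the two are independent for every $n\in\N$. Lemma~\ref{JointRateFunction} then shows that the pair $\zeta^{(n)}=(\zeta_1^{(n)},\zeta_2^{(n)})$ satisfies an LDP, hence an MDP, on $\R^2$ with speed $t_n^2$ and rate function $(x_1,x_2)\mapsto x_1^2/4+x_2^2/4=\|x\|_2^2/4$, as asserted. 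Since every step is a direct appeal to an already established result, there is no real obstacle; the only points that require (routine) care are the evaluation of $M_2(2)$ and $M_2(4)$ through \eqref{eq:Mpq} to pin down the constant $1/4$, and the bookkeeping needed to apply Lemma~\ref{lem:MDP Z^2} with a summation length different from $n$, which is legitimate because that lemma only uses that the number of summands tends to infinity faster than $t_n^2$ — a condition guaranteed here by $t_n=o(\sqrt{k_n})$, respectively $t_n=o(\sqrt{n-k_n})$.
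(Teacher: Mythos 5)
Your proposal is correct and follows essentially the same route as the paper: the one-dimensional statements are exactly Lemma \ref{lem:MDP Z^2} specialized to $p=q=2$ (with $M_2(2)=1$, $M_2(4)=3$, hence $\Var[g_1^2]=2$ and rate $x^2/4$), applied with summation lengths $k_n$ and $n-k_n$, and the bivariate statement then follows from independence of the two blocks of Gaussians together with Lemma \ref{JointRateFunction}. This is precisely how the paper obtains the corollary, so there is nothing to add.
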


% % % % % % % % % % % % % % % % % % % % % % % % % % % % % % % % % % % %
\subsection{MDP for correlated vectors of $p$-generalized Gaussian random variables}
% % % % % % % % % % % % % % % % % % % % % % % % % % % % % % % % % % % %

In a next step we prove an MDP for a sum of random vectors, whose entries are correlated and composed of $p$-generalized Gaussian random variables.

\begin{lemma}[Bivariate MDP for $p$-generalized Gaussians]\label{lem:MDPVectorPGeneralizedGaussian}
Let $2\leq p < \infty$ and $(Z_i)_{i\in\N}$ be a sequence of independent copies of a $p$-generalized Gaussian random variable. Assume that $(t_n)_{n\in\N}$ is a sequence of positive real numbers such that $t_n=\omega(1)$ and $t_n=o(\sqrt{n})$. Then the sequence of random vectors in $\R^2$ given by
$$
\big(\zeta_1^{(n)},\zeta_2^{(n)}\big)=\frac{1}{t_n\sqrt{n}}\sum_{i=1}^n\big(Z_i^2-M_p(2),|Z_i|^p-1\big)
$$
satisfies an MDP on $\R^2$ with speed $t_n^2$ and rate function
$$
J(x_1,x_2)={1\over 2A_p}\Bigg(px_1^2+p^{4/p}\bigg({\Gamma\big({5\over p}\big)\over\Gamma\big({1\over p}\big)}-{\Gamma\big({3\over p}\big)^2\over\Gamma\big({1\over p}\big)^2}\bigg)x_2^2-4p^{2/p}{\Gamma\big({3\over p}\big)\over\Gamma\big({1\over p}\big)}x_1x_2\Bigg),\quad(x_1,x_2)\in\R^2,
$$
with the constant $A_p$ given by
$$
A_p:=p^{4/p}\bigg({\Gamma\big({5\over p}\big)\over\Gamma\big(1+{1\over p}\big)}-(p+4){\Gamma\big({3\over p}\big)^2\over\Gamma\big({1\over p}\big)^2}\bigg).
$$
\end{lemma}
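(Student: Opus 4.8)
The plan is to read the statement off directly from the multivariate MDP for i.i.d.\ sums, Lemma \ref{lem:MDPVector}, applied with $d=2$ to the independent and identically distributed random vectors $X_i:=\big(Z_i^2-M_p(2),\,|Z_i|^p-1\big)$, $i\in\N$. Since $\big(\zeta_1^{(n)},\zeta_2^{(n)}\big)=\frac{1}{t_n\sqrt n}\sum_{i=1}^nX_i$ and the sequence $(t_n)_{n\in\N}$ satisfies $t_n=\omega(1)$ and $t_n=o(\sqrt n)$ by hypothesis, it suffices to verify the three remaining assumptions of Lemma \ref{lem:MDPVector}: that $X_1$ is centered, that its covariance matrix $\bC=\Cov(X_1)$ is invertible, and that $\log\E\big[e^{\langle\lambda,X_1\rangle}\big]<\infty$ for all $\lambda$ in a ball around the origin. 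Once these are checked, Lemma \ref{lem:MDPVector} produces an MDP on $\R^2$ with speed $t_n^2$ and rate function $x\mapsto\tfrac12\langle x,\bC^{-1}x\rangle$, and the only remaining task is to identify this quadratic form with $J$.

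Centering is immediate from \eqref{eq:Moments}: $\E Z_1^2=M_p(2)$, and a short computation from the definition \eqref{eq:Mpq} together with $\Gamma(1+x)=x\,\Gamma(x)$ gives $M_p(p)=\frac{p}{p+1}\frac{\Gamma(2+1/p)}{\Gamma(1+1/p)}=1$, so that $\E|Z_1|^p=1$ and $\E X_1=0$. For the exponential moments, since $p\geq 2$ the density $f_p(z)\propto e^{-|z|^p/p}$ dominates $e^{\lambda_1 z^2}$ for large $|z|$, so $\E\big[e^{\lambda_1 Z_1^2+\lambda_2|Z_1|^p}\big]<\infty$ whenever $|\lambda_1|,|\lambda_2|$ are small enough (for $p=2$ one additionally needs $\lambda_1+\lambda_2<\tfrac12$, which such $\lambda$ satisfy), hence the cumulant generating function of $X_1$ is finite in a neighborhood of the origin. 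The core computation concerns $\bC$: using \eqref{eq:Moments} and again $\Gamma(1+x)=x\Gamma(x)$ one gets $M_p(2)=p^{2/p}\Gamma(3/p)/\Gamma(1/p)$, $M_p(4)=p^{4/p}\Gamma(5/p)/\Gamma(1/p)$, $M_p(2p)=p+1$, and $M_p(p+2)=3M_p(2)$, whence
$\Var Z_1^2=M_p(4)-M_p(2)^2=p^{4/p}\big(\Gamma(5/p)/\Gamma(1/p)-\Gamma(3/p)^2/\Gamma(1/p)^2\big)$, $\Var|Z_1|^p=M_p(2p)-1=p$, and $\Cov(Z_1^2,|Z_1|^p)=M_p(p+2)-M_p(2)=2M_p(2)=2p^{2/p}\Gamma(3/p)/\Gamma(1/p)$.

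The determinant of $\bC$ then simplifies, using $p\,\Gamma(5/p)/\Gamma(1/p)=\Gamma(5/p)/\Gamma(1+1/p)$, to exactly $A_p$, and by the Cauchy--Schwarz inequality $A_p=\det\bC\geq 0$ with equality only if $Z_1^2$ and $|Z_1|^p$ are affinely dependent, i.e.\ $p=2$; for $p>2$ the matrix $\bC$ is therefore strictly positive definite, so invertible. Inverting $\bC$ by the standard $2\times 2$ formula and writing out $\tfrac12\langle x,\bC^{-1}x\rangle$ reproduces $J(x_1,x_2)$ verbatim, which completes the argument. The proof involves no conceptual difficulty, as everything reduces to Lemma \ref{lem:MDPVector}; the only genuine point of care is the non-degeneracy of $\bC$, which fails precisely at $p=2$ (there the two coordinates coincide and the scalar MDP of Lemma \ref{lem:MDP Z^2} applies instead), so the statement is to be understood for $p>2$, and the routine bookkeeping with the Gamma-function identities needed to collapse $\det\bC$ and the quadratic form into the stated $A_p$ and $J$.
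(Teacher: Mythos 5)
Your proof is correct and follows essentially the same route as the paper's: both apply Lemma \ref{lem:MDPVector} with $d=2$ to the i.i.d.\ vectors $\big(Z_i^2-M_p(2),\,|Z_i|^p-1\big)$, compute the covariance matrix from \eqref{eq:Moments}, and identify $\tfrac12\langle x,\bC^{-1}x\rangle$ with $J$ after the Gamma-function simplifications, so there is nothing to add on the main line of argument. Your extra point about $p=2$ is also well taken: there $Z_1^2-M_2(2)$ and $|Z_1|^2-1$ coincide, so $\bC$ is singular and $A_2=0$, making the stated $J$ ill-defined and Lemma \ref{lem:MDPVector} inapplicable; the paper's proof invokes $p\geq 2$ only to secure exponential moments and passes over this degenerate case, whereas your reading---restrict to $p>2$ and handle $p=2$ via the scalar MDP of Lemma \ref{lem:MDP Z^2}---is the accurate one.
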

\begin{proof}
We start by noting that the random vector $\big(|Z_1|^2-M_p(2),|Z_1|^p-1\big)$ is centered. Its $2\times 2$ covariance matrix $\bC$ is given by
$$
\bC=\begin{pmatrix}
c_{11} & c_{12} \\ c_{12} & c_{22}
\end{pmatrix},
$$
where
\begin{align*}
c_{11} &= \Cov\big(|Z_1|^2-M_p(2),|Z_1|^2-M_p(2)\big) = \Var|Z_1|^2 = M_p(4) - M_p(2)^2,\\
c_{12} &= \Cov\big(|Z_1|^2-M_p(2),|Z_1|^p-1\big) = M_p(p+2)-M_p(2)M_p(p) = M_p(p+2)-M_p(2),\\
c_{22} &= \Cov\big(|Z_1|^p-1,|Z_1|^p-1\big) = M_p(2p) - M_p(p)^2 = p,
\end{align*}
recall \eqref{eq:Moments}. Consequently, $\det\bC = p(M_p(4) - M_p(2)^2) - (M_p(p+2)-M_p(2))^2$
and
\begin{align*}
\bC^{-1} = {1\over\det\bC}\begin{pmatrix}
p & M_p(2)-M_p(p+2) \\ M_p(2)-M_p(p+2) & M_p(4) - M_p(2)^2
\end{pmatrix}.
\end{align*}
Expressing this in terms of gamma functions, using \eqref{eq:Mpq}, we arrive at $\det\bC=A_p$ and
\begin{align*}
\bC^{-1} = {1\over A_p}\begin{pmatrix}
p & -2p^{2/p}{\Gamma({3\over p})\over\Gamma({1\over p})} \\ -2p^{2/p}{\Gamma({3\over p})\over\Gamma({1\over p})} & p^{4/p}\bigg({\Gamma({5\over p})\over\Gamma({1\over p})}-{\Gamma({3\over p})^2\over\Gamma({1\over p})^2}\bigg)
\end{pmatrix}.
\end{align*}
Since by assumption $p\geq 2$, exponential moments exist and we can apply Lemma \ref{lem:MDPVector}.
%Next we check condition \eqref{eq:limit condition eichelsbacher-loewe}. But this follows as in the proof of Lemma \ref{lem:MDP Z^2} thanks to our assumption that $p\geq 2$, because we have
%\begin{align*}
%\Pro\Big[\|\big(|Z_1|^2-M_p(2),|Z_1|^p-1\big)\|_\infty >t_n\sqrt{n} \Big] & \leq \Pro\Big[|Z_1|^2-M_p(2)+|Z_1|^p-1 >t_n\sqrt{n} \Big] \\
%& \leq \Pro\Big[|Z_1|^2-M_p(2)>\frac{t_n\sqrt{n}}{2} \Big] + \Pro\Big[|Z_1|^p-1 >\frac{t_n\sqrt{n}}{2} \Big]
%\end{align*}
%and we know that the logarithm is a concave function.
%Next, we check the condition on the logarithmic moment generating function. For $(\lambda_1,\lambda_2)\in\R^2$ we have that
%\begin{align*}
%\E e^{\lambda_1(|Z_1|^2-M_p(2))+\lambda_2(|Z_1|^p-1)} = {1\over c_p}\int_{-\infty}^\infty e^{\lambda_1(x^2-M_p(2))+\lambda_2(|x|^p-1)-|x|^p/p}\,\dint x,
%\end{align*}
%which for $p\geq 2$ is finite if $-\infty<\lambda_1,\lambda_2<1/p$.
Thus, we conclude that the sequence of pairs $\big(\big(\zeta_1^{(n)},\zeta_2^{(n)}\big)\big)_{n\in\N}$ satisfies an MDP on $\R^2$ with speed $t_n^2$ and rate function
$$
J(x_1,x_2) = {1\over 2}\Big\langle(x_1,x_2)^T,\bC^{-1}(x_1,x_2)^T\Big\rangle,\qquad (x_1,x_2)\in\R^2.
$$
Using the explicit form of $\bC^{-1}$, the result follows.
\end{proof}

% % % % % % % % % % % % % % % % % %
\subsection{Intermediate MDP}
% % % % % % % % % % % % % % % % % %

Recall the definition of the random variables $\xi_{p,2}^{(n)}$, $\xi_{p,p}^{(n)}$, $\zeta_1^{(n)}$, and $\zeta_2^{(n)}$. Using what has been proved in the previous section, we arrive at the following MDP for the sequence of random vectors $(\xi_{p,2}^{(n)},\xi_{p,p}^{(n)},\zeta_1^{(n)})$ and $(\xi_{p,2}^{(n)},\xi_{p,p}^{(n)},\zeta_1^{(n)},\zeta_2^{(n)})$.

\begin{lemma}[Multivariate MDP for the core term]\label{lem:MDPVectorR4}
The following sequences of random vectors satisfy MDPs with speed $t_n^2$ and the given rate functions:
\begin{itemize}
	\item[(a)] If $k_n=\omega(1)$, $t_n=\omega(1)$, and $t_n=o(\sqrt{k_n})$, then the sequence of random vectors $(\xi_{p,2}^{(n)},\xi_{p,p}^{(n)},\zeta_1^{(n)})$ satisfies an MDP on $\R^3$ with speed $t_n^2$ and rate function
	$$
	I(x_1,x_2,x_3)=J(x_1,x_2)+\frac{x_3^2}{4},
	$$
	where $J(\,\cdot\,,\,\cdot\,)$ is the rate function from Lemma \ref{lem:MDPVectorPGeneralizedGaussian}.
	\item[(b)] If $k_n=\omega(1)$, $n-k_n=\omega(1)$, $t_n=\omega(1)$, $t_n=o(\sqrt{k_n})$, and $t_n=o(\sqrt{n-k_n})$, then the sequence of random vectors $(\xi_{p,2}^{(n)},\xi_{p,p}^{(n)},\zeta_1^{(n)},\zeta_2^{(n)})$ satisfies an MDP on $\R^4$ with speed $t_n^2$ and rate function
	$$
	I(x_1,x_2,x_3,x_4)=J(x_1,x_2)+\frac{x_3^2+x_4^2}{4},
	$$
	where $J(\,\cdot\,,\,\cdot\,)$ is the rate function from Lemma \ref{lem:MDPVectorPGeneralizedGaussian}.
\end{itemize}
\end{lemma}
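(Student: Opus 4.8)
The plan is to derive both multivariate MDPs by gluing together MDPs that have already been established for the two independent ``blocks'' appearing in the core term: the $p$-generalized Gaussian part $(\xi_{p,2}^{(n)},\xi_{p,p}^{(n)})$, which is a measurable function of $Z_1,\dots,Z_n$ only, and the standard Gaussian part $\zeta_1^{(n)}$ (respectively $(\zeta_1^{(n)},\zeta_2^{(n)})$), which is a measurable function of the $g_i$'s only. Since these two families of random variables are mutually independent by assumption, and since an MDP is by definition an LDP with the prescribed speed $t_n^2$, the rate-function additivity of Lemma \ref{JointRateFunction} applies verbatim, provided both component MDPs are run at the \emph{same} speed $t_n^2$ --- which they are.

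For part (a), I would first note that the hypotheses $t_n=\omega(1)$ and $t_n=o(\sqrt{k_n})$, together with $k_n\leq n$, force $t_n=o(\sqrt n)$, so Lemma \ref{lem:MDPVectorPGeneralizedGaussian} applies and $(\xi_{p,2}^{(n)},\xi_{p,p}^{(n)})$ satisfies an MDP on $\R^2$ with speed $t_n^2$ and rate function $J$. Simultaneously, $k_n=\omega(1)$, $t_n=\omega(1)$, $t_n=o(\sqrt{k_n})$ are precisely the hypotheses of the first assertion of Corollary \ref{cor:MDPGaussian}, so $\zeta_1^{(n)}$ satisfies an MDP on $\R$ with speed $t_n^2$ and rate function $x_3\mapsto x_3^2/4$. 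By the independence of the $Z_i$'s and the $g_i$'s, the vectors $(\xi_{p,2}^{(n)},\xi_{p,p}^{(n)})$ and $\zeta_1^{(n)}$ are independent for each $n$, so Lemma \ref{JointRateFunction} yields that $(\xi_{p,2}^{(n)},\xi_{p,p}^{(n)},\zeta_1^{(n)})$ satisfies an LDP, hence an MDP, on $\R^3$ with speed $t_n^2$ and rate function $(x_1,x_2,x_3)\mapsto J(x_1,x_2)+x_3^2/4$, which is exactly the claim. Part (b) is obtained in the same way, now invoking the last assertion of Corollary \ref{cor:MDPGaussian}: under the additional hypotheses $n-k_n=\omega(1)$ and $t_n=o(\sqrt{n-k_n})$, the pair $(\zeta_1^{(n)},\zeta_2^{(n)})$ satisfies an MDP on $\R^2$ with speed $t_n^2$ and rate function $(x_3,x_4)\mapsto(x_3^2+x_4^2)/4$; this pair depends only on $g_1,\dots,g_n$ and is therefore independent of $(\xi_{p,2}^{(n)},\xi_{p,p}^{(n)})$, so a second application of Lemma \ref{JointRateFunction} gives the MDP on $\R^4$ with rate function $I(x_1,x_2,x_3,x_4)=J(x_1,x_2)+(x_3^2+x_4^2)/4$.

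I do not expect a genuine obstacle here: the argument is structurally a routine ``independence plus gluing'' step. The only points requiring care are (i) checking that the hypotheses of the lemma propagate to the exact hypotheses demanded by Lemma \ref{lem:MDPVectorPGeneralizedGaussian} and Corollary \ref{cor:MDPGaussian} --- in particular that $t_n=o(\sqrt{k_n})$ suffices for $t_n=o(\sqrt n)$ --- and (ii) making explicit that Lemma \ref{JointRateFunction}, although stated for LDPs, is legitimately applied in the MDP setting precisely because the two component principles share the common speed $t_n^2$.
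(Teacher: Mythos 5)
Your proposal is correct and follows essentially the same route as the paper: the authors also combine the bivariate MDP of Lemma \ref{lem:MDPVectorPGeneralizedGaussian} for $(\xi_{p,2}^{(n)},\xi_{p,p}^{(n)})$ with the Gaussian MDP of Corollary \ref{cor:MDPGaussian} for $\zeta_1^{(n)}$ (resp.\ $(\zeta_1^{(n)},\zeta_2^{(n)})$) and invoke the independence/additivity statement of Lemma \ref{JointRateFunction} at the common speed $t_n^2$. Your extra remark that $t_n=o(\sqrt{k_n})$ forces $t_n=o(\sqrt{n})$ is a correct hypothesis check that the paper leaves implicit.
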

\begin{proof}
We only prove part (b), since the proof of part (a) is completely analogous and simpler. We note that $\xi_{p,2}^{(n)}$ and $\xi_{p,p}^{(n)}$ are independent from $\zeta_1^{(n)}$ and $\zeta_2^{(n)}$. We have seen in Lemma \ref{lem:MDPVectorPGeneralizedGaussian} that the sequence of random vectors $(\xi_{p,2}^{(n)},\xi_{p,p}^{(n)})$ satisfies an MDP on $\R^2$ with speed $t_n^2$ and rate function $J(x_1,x_2)$. Also, Corollary \ref{cor:MDPGaussian} implies that the sequence of random vectors $(\zeta_1^{(n)},\zeta_2^{(n)})$ satisfies an MDP on $\R^2$ with speed $t_n^2$ and rate function $L(x_3,x_4)={x_3^2+x_4^2\over 4}$. Thus, the sequence of random vectors $(\xi_{p,2}^{(n)},\xi_{p,p}^{(n)},\zeta_1^{(n)},\zeta_2^{(n)})$ satisfies an MDP on $\R^4$ with the same speed $t_n^2$, whose rate function is the sum of the rate functions $J(x_1,x_2)$ and $L(x_3,x_4)$, see Lemma \ref{JointRateFunction}.
\end{proof}

Next, we shall apply the contraction principle from Lemma \ref{lem:refinement contraction principle} to deduce the following intermediate MDP for the random sequence $(\mathscr Y_{n,p})_{n\in\N}$.

\begin{lemma}[Intermediate MDP for the core term]\label{lem:IntermediateMDP}
The following sequences of random variables satisfy MDPs with speed $t_n^2$ and the given rate functions:
\begin{itemize}
	\item[(a)] If $k_n=\omega(1)$, $t_n=\omega(1)$, and $t_n=o(\sqrt{k_n})$, then the sequence of random variables $(\widetilde{\mathscr{Y}}_{n,p})_{n\in\N}$ satisfies an MDP on $\R$ with speed $t_n^2$ and rate function
	$$
	I(y)=\inf\Big\{J(x_1,x_2)+{x_3^2\over 4}:\widetilde{F}(x_1,x_2,x_3)=y\Big\},\qquad y\in\R,
	$$
	with the function $\widetilde{F}:\R^3\to\R$ given by
	$$
	F(x_1,x_2,x_3):=\frac{\sqrt{\lambda}}{2M_p(2)}x_1-\frac{\sqrt{\lambda}}{p}x_2+\frac{1-\lambda}{2}x_3.
	$$
	\item[(b)] If $k_n=\omega(1)$, $n-k_n=\omega(1)$, $t_n=\omega(1)$, $t_n=o(\sqrt{k_n})$, and $t_n=o(\sqrt{n-k_n})$, then the sequence of random variables $(\mathscr Y_{n,p})_{n\in\N}$ satisfies an MDP on $\R$ with speed $t_n^2$ and rate function
	$$
	I(y)=\inf\Big\{J(x_1,x_2)+{x_3^2+x_4^2\over 4}:F(x_1,x_2,x_3,x_4)=y\Big\},\qquad y\in\R,
	$$
	with the function $F:\R^4\to\R$ given by
	$$
	F(x_1,x_2,x_3,x_4):=\frac{\sqrt{\lambda}}{2M_p(2)}x_1-\frac{\sqrt{\lambda}}{p}x_2+\frac{1-\lambda}{2}x_3-\frac{\sqrt{\lambda(1-\lambda)}}{2}x_4.
	$$
\end{itemize}
\end{lemma}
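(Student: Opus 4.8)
The plan is to derive both statements from the multivariate moderate deviation principles of Lemma~\ref{lem:MDPVectorR4} by invoking the refined contraction principle, Lemma~\ref{lem:refinement contraction principle} (recall that an MDP is an LDP at speed $t_n^2$, so that lemma applies verbatim). Since the arguments for (a) and (b) differ only in the ambient dimension, I describe (b) and indicate the obvious changes for (a) at the end. Write $\lambda_n:=k_n/n$. The key observation is that $\mathscr Y_{n,p}$ is \emph{exactly} the image of the random vector $V_n:=(\xi_{p,2}^{(n)},\xi_{p,p}^{(n)},\zeta_1^{(n)},\zeta_2^{(n)})$ under the linear map $F_n\colon\R^4\to\R$ obtained from $F$ by replacing the limit $\lambda$ with $\lambda_n$, namely
$$
F_n(x_1,x_2,x_3,x_4)=\frac{\sqrt{\lambda_n}}{2M_p(2)}x_1-\frac{\sqrt{\lambda_n}}{p}x_2+\frac{1-\lambda_n}{2}x_3-\frac{\sqrt{\lambda_n(1-\lambda_n)}}{2}x_4 .
$$
By Lemma~\ref{lem:MDPVectorR4}(b) the sequence $(V_n)_{n\in\N}$ satisfies an MDP on $\R^4$ with speed $t_n^2$ and rate function $I_4(x_1,x_2,x_3,x_4)=J(x_1,x_2)+\frac{x_3^2+x_4^2}{4}$, which is coercive (its sublevel sets are compact, being a rate function). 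Since the limiting map $F$ is continuous and each $F_n$ is measurable, Lemma~\ref{lem:refinement contraction principle} will yield the asserted MDP for $F_n(V_n)=\mathscr Y_{n,p}$ with rate function $I_4\circ F^{-1}$, that is $I(y)=\inf\{J(x_1,x_2)+\frac{x_3^2+x_4^2}{4}:F(x_1,x_2,x_3,x_4)=y\}$, once I check its hypothesis: for every $\delta>0$,
$$
\limsup_{n\to\infty}t_n^{-2}\log\Pro[V_n\in\Gamma_{n,\delta}]=-\infty,\qquad \Gamma_{n,\delta}:=\bigl\{x\in\R^4:|F_n(x)-F(x)|>\delta\bigr\}.
$$

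The verification of this condition is the only substantive point. Because $\lambda_n\to\lambda$, all coefficients of the linear functional $F_n-F$ tend to $0$, so by Cauchy--Schwarz there is a sequence $\varepsilon_n\to0$ with $|F_n(x)-F(x)|\le\varepsilon_n\|x\|_2$ for every $x\in\R^4$; consequently $\Gamma_{n,\delta}\subseteq\{x\in\R^4:\|x\|_2>\delta/\varepsilon_n\}$. Fix $R>0$. For all sufficiently large $n$ one has $\delta/\varepsilon_n\ge R$, so $\Pro[V_n\in\Gamma_{n,\delta}]\le\Pro[\|V_n\|_2\ge R]$, and the MDP upper bound for $(V_n)$ applied to the closed set $\{x:\|x\|_2\ge R\}$ gives $\limsup_n t_n^{-2}\log\Pro[\|V_n\|_2\ge R]\le-\inf_{\|x\|_2\ge R}I_4(x)$. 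As $I_4$ is coercive, $\inf_{\|x\|_2\ge R}I_4(x)\to\infty$ when $R\to\infty$; letting $R\to\infty$ produces $\limsup_n t_n^{-2}\log\Pro[V_n\in\Gamma_{n,\delta}]=-\infty$, so Lemma~\ref{lem:refinement contraction principle} applies and finishes (b).

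For part (a) one argues identically with the three-dimensional vector $(\xi_{p,2}^{(n)},\xi_{p,p}^{(n)},\zeta_1^{(n)})$ and Lemma~\ref{lem:MDPVectorR4}(a) in place of $(V_n)$ and Lemma~\ref{lem:MDPVectorR4}(b), using the three-variable linear maps $\widetilde F$ and its $\lambda_n$-version $\widetilde F_n$, for which $\widetilde{\mathscr Y}_{n,p}=\widetilde F_n(\xi_{p,2}^{(n)},\xi_{p,p}^{(n)},\zeta_1^{(n)})$; here no hypothesis on $n-k_n$ is needed since $\zeta_2^{(n)}$ is absent. The one place where anything actually happens is the $\Gamma_{n,\delta}$-estimate, i.e. showing that the drift of the coefficients from $\lambda_n$ to the limit $\lambda$ is negligible on the exponential scale $t_n^2$, and this rests entirely on the coercivity of the rate function from Lemma~\ref{lem:MDPVectorR4} (which in turn stems from the invertibility of the covariance matrix in Lemma~\ref{lem:MDPVectorPGeneralizedGaussian}); everything else is bookkeeping. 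Alternatively one could split this into an ordinary contraction by the fixed map $F$ followed by an appeal to Lemma~\ref{prop:exponentially equivalent}, using $|F(V_n)-F_n(V_n)|\le\varepsilon_n\|V_n\|_2$, but the one-shot version through Lemma~\ref{lem:refinement contraction principle} is slightly cleaner.
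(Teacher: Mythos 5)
Your proposal is correct and follows the same overall skeleton as the paper: both identify $\mathscr Y_{n,p}$ (resp.\ $\widetilde{\mathscr Y}_{n,p}$) as $F_n$ applied to the vector from Lemma \ref{lem:MDPVectorR4} with $\lambda$ replaced by $\lambda_n$, and both invoke the refined contraction principle of Lemma \ref{lem:refinement contraction principle}, so that everything reduces to the negligibility condition $\limsup_n t_n^{-2}\log\Pro[V_n\in\Gamma_{n,\delta}]=-\infty$. Where you differ is in how that condition is verified. The paper bounds $\Pro[V_n\in\Gamma_{n,\delta}]$ coordinate-wise by a union bound and then runs a case distinction on the size of $t_n\sqrt n/|\sqrt{\lambda_n}-\sqrt\lambda|$: in one regime it applies Cram\'er's theorem at speed $n$, in the other the one-dimensional MDPs of Lemma \ref{lem:MDP Z^2}, obtaining a bound of order $e^{-c\,t_n^2/(\sqrt{\lambda_n}-\sqrt\lambda)^2}$ which is superexponential at scale $t_n^2$ because $\lambda_n\to\lambda$. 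You instead use the linear bound $|F_n(x)-F(x)|\le\varepsilon_n\|x\|_2$ with $\varepsilon_n=\|c_n-c\|_2\to0$, so that $\Gamma_{n,\delta}$ is eventually contained in the complement of any fixed ball, and then apply the MDP upper bound of Lemma \ref{lem:MDPVectorR4} to the closed set $\{\|x\|_2\ge R\}$ together with the goodness (compact level sets, here coercivity of a positive definite quadratic form) of the rate function, letting $R\to\infty$. Your route is shorter and avoids the regime analysis entirely, at the price of using the full multivariate MDP upper bound and the coercivity of $I_4$ explicitly; the paper's route only needs one-dimensional Cram\'er/MDP inputs but is more bookkeeping. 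Both are valid proofs of the lemma, and like the paper you treat (b) in detail and note that (a) is the same argument in one dimension fewer, with no condition on $n-k_n$ needed since $\zeta_2^{(n)}$ does not appear.
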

\begin{proof}
Again, we only prove part (b), since the proof of part (a) is completely analogous and simpler. For each $n\in\N$ let $F_n:\R^4\to\R$ be the function
$$
F_n(x_1,x_2,x_3,x_4):=\frac{\sqrt{\lambda_n}}{2M_p(2)}x_1-\frac{\sqrt{\lambda_n}}{p}x_2+\frac{1-\lambda_n}{2}x_3-\frac{\sqrt{\lambda_n(1-\lambda_n)}}{2}x_4
$$
and observe that $F(x_1,x_2,x_3,x_4)$ is the pointwise limit of $F_n(x_1,x_2,x_3,x_4)$, as $n\to\infty$. Note that for each $n\in\N$, $\mathscr Y_{n,p}$ has the same distribution as $F_n(\xi_{p,2}^{(n)},\xi_{p,p}^{(n)},\zeta_1^{(n)},\zeta_2^{(n)})$.

Our goal is to apply Lemma \ref{lem:refinement contraction principle} to conclude from Lemma \ref{lem:MDPVectorR4} the MDP for the random sequence $(\mathscr Y_{n,p})_{n\in\N}$. For this we need to argue that
\begin{equation}\label{eq:IntermediateMDPComputation1}
\lim_{n\to\infty}{1\over t_n^2}\log\Pro\big[(\xi_{p,2}^{(n)},\xi_{p,p}^{(n)},\zeta_1^{(n)},\zeta_2^{(n)})\in\Gamma_{n,\delta}\big]=-\infty\,,
\end{equation}
where for $\delta>0$, $\Gamma_{n,\delta}$ is the set $\Gamma_{n,\delta}:=\{x\in\R^{4}:|F_n(x)-F(x)|>\delta\}$. To prove this, we first note that
\begin{align*}
\Pro\big[(\xi_{p,2}^{(n)},\xi_{p,p}^{(n)},\zeta_1^{(n)},\zeta_2^{(n)})\in  \Gamma_{n,\delta}\big]&\leq\Pro\left[\frac{|\sqrt{\lambda_n}-\sqrt{\lambda}|}{t_n\sqrt{n}}\left|\sum_{i=1}^n \big(Z_i^2-M_p(2)\big)\right|>\frac{\delta M_p(2)}{2}\right]\cr
&\quad+\Pro\left[\frac{|\sqrt{\lambda_n}-\sqrt{\lambda}|}{t_n\sqrt{n}}\left|\sum_{i=1}^n \big(|Z_i|^p-1\big)\right|>\frac{p\delta }{4}\right]+\Pro\left[\frac{|\lambda_n-\lambda|}{t_n\sqrt{k_n}}\left|\sum_{i=1}^{k_n} \big(g_i^2-1\big)\right|>\frac{\delta }{2}\right]\cr
&\quad+\Pro\left[\frac{\sqrt{\lambda_n(1-\lambda_n)}-\sqrt{\lambda(1-\lambda)|}}{t_n\sqrt{n-k_n}}\left|\sum_{i=k_n+1}^n \big(g_i^2-1\big)\right|>\frac{\delta}{2}\right].
\end{align*}
If there exists a constant $c_1:=c_1(\delta,p)\in(0,\infty)$ such that $\frac{t_n\sqrt{n}}{|\sqrt{\lambda_n}-\sqrt{\lambda}|}\geq c_1n$, then, by Cram\'er's theorem in Lemma \ref{lem:cramer} the sequence $\frac{1}{n}\sum_{i=1}^n \big(Z_i^2-M_p(2)\big)$ satisfies an LDP with speed $n$ and some rate function. So, there exists a constant $c_2:=c_2(\delta,p)\in(0,\infty)$ only depending on $\delta$ and on $p$ such that
\begin{align*}
&\Pro\left[\frac{|\sqrt{\lambda_n}-\sqrt{\lambda}|}{t_n\sqrt{n}}\left|\sum_{i=1}^n \big(Z_i^2-M_p(2)\big)\right|>\frac{\delta M_p(2)}{2}\right]\leq\Pro\left[\frac{1}{n}\left|\sum_{i=1}^n \big(Z_i^2-M_p(2)\big)\right|>\frac{c_1\delta M_p(2)}{2}\right]\leq e^{-c_2n}
\end{align*}
so that
$$
\lim_{n\to\infty}\frac{1}{t_n^2}\log\Pro\left[\frac{|\sqrt{\lambda_n}-\sqrt{\lambda}|}{t_n\sqrt{n}}\left|\sum_{i=1}^n \big(Z_i^2-M_p(2)\big)\right|>\frac{\delta M_p(2)}{2}\right]\leq-\lim_{n\to\infty}\frac{c_2n}{t_n^2}=-\infty
$$
by our assumption on the growth of $t_n$. On the contrary, if such constant does not exist, then by Lemma \ref{lem:MDP Z^2} there exists another constant $c_3:=c_3(\delta,p)\in(0,\infty)$ depending on $\delta$ and $p$ only such that
$$
\Pro\left[\frac{|\sqrt{\lambda_n}-\sqrt{\lambda}|}{t_n\sqrt{n}}\left|\sum_{i=1}^n \big(Z_i^2-M_p(2)\big)\right|>\frac{\delta M_p(2)}{2}\right]\leq e^{-\frac{c_3t_n^2}{(\sqrt{\lambda_n}-\sqrt{\lambda})^2}},
$$
which also implies that
\begin{align*}
&\lim_{n\to\infty}\frac{1}{t_n^2}\log\Pro\left[\frac{|\sqrt{\lambda_n}-\sqrt{\lambda}|}{t_n\sqrt{n}}\left|\sum_{i=1}^n \big(Z_i^2-M_p(2)\big)\right|>\frac{\delta M_p(2)}{2}\right]\leq-\lim_{n\to\infty}\frac{c_3}{(\sqrt{\lambda_n}-\sqrt{\lambda})^2}=-\infty,
\end{align*}
since $\lambda_n\to\lambda$, as $n\to\infty$. Thus, in any case we have that
$$
\lim_{n\to\infty}\frac{1}{t_n^2}\log\Pro\left[\frac{|\sqrt{\lambda_n}-\sqrt{\lambda}|}{t_n\sqrt{n}}\left|\sum_{i=1}^n Z_i^2-M_p(2)\right|>\frac{\delta M_p(2)}{2}\right]=-\infty.
$$
The rest of the terms are treated in the same way, which eventually proves \eqref{eq:IntermediateMDPComputation1}.
\end{proof}

% % % % % % % % % % % % % % % % % % % % % % % % % % %
\subsection{Explicit form of the rate function}
% % % % % % % % % % % % % % % % % % % % % % % % % % %

After having proved an intermediate MDP in Lemma \ref{lem:IntermediateMDP}, we shall now provide an explicit form for the rate function $I(y)$.

\begin{lemma}[Explicit rate function]\label{lem:RateFunction}
The rate functions $I:\R\to[0,\infty]$ in Lemma \ref{lem:IntermediateMDP} (a) and (b) are given by $I(y)=\alpha_{p,\lambda}y^2$ with
$$
\alpha_{p,\lambda}
{2\Gamma({1\over p})\Gamma({3\over p})^2\over (2p-\lambda(4+3p))\Gamma(1+{1\over p})\Gamma({3\over p})^2+\lambda\Gamma({1\over p})^2\Gamma({5\over p})} .
$$
\end{lemma}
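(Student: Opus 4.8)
The plan is to recognise both rate functions in Lemma~\ref{lem:IntermediateMDP} as the value of an explicit finite-dimensional quadratic program and to evaluate it. By Lemma~\ref{lem:MDPVectorPGeneralizedGaussian}, Corollary~\ref{cor:MDPGaussian}, and the independence of the $\xi$-block from the $\zeta$-block (Lemma~\ref{JointRateFunction}), the rate function appearing in Lemma~\ref{lem:MDPVectorR4} is the positive-definite quadratic form $Q(x)=\tfrac12\langle x,\Sigma^{-1}x\rangle$, where $\Sigma$ is the block-diagonal covariance matrix $\mathrm{diag}(\bC,2,2)$ in case (b) and $\mathrm{diag}(\bC,2)$ in case (a), and $\bC$ is the $2\times2$ matrix with entries $c_{11}=M_p(4)-M_p(2)^2$, $c_{12}=M_p(p+2)-M_p(2)$, $c_{22}=p$ from the proof of Lemma~\ref{lem:MDPVectorPGeneralizedGaussian}. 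Since $F$ (resp.\ $\widetilde F$) is linear, $F(x)=\langle a,x\rangle$ with $a=\big(\tfrac{\sqrt\lambda}{2M_p(2)},-\tfrac{\sqrt\lambda}{p},\tfrac{1-\lambda}{2},-\tfrac{\sqrt{\lambda(1-\lambda)}}{2}\big)$ in case (b) (and with the first three coordinates of this vector in case (a)), the contraction in Lemma~\ref{lem:IntermediateMDP} gives $I(y)=\inf\{Q(x):\langle a,x\rangle=y\}$.

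Next I would invoke the elementary identity
\[
\inf\Big\{\tfrac12\langle x,\Sigma^{-1}x\rangle:\langle a,x\rangle=y\Big\}=\frac{y^2}{2\langle a,\Sigma a\rangle},
\]
valid for any symmetric positive-definite $\Sigma$ and any $a\neq0$; it follows in one line from Lagrange multipliers (the minimiser is $x=\tfrac{y}{\langle a,\Sigma a\rangle}\Sigma a$) or from the Cauchy--Schwarz inequality in the inner product $\langle\cdot,\cdot\rangle_\Sigma$. This already gives $I(y)=\alpha_{p,\lambda}y^2$ with $\alpha_{p,\lambda}=\big(2\langle a,\Sigma a\rangle\big)^{-1}$, so the whole statement reduces to computing $\langle a,\Sigma a\rangle=\langle(a_1,a_2),\bC(a_1,a_2)\rangle+2a_3^2+2a_4^2$ (the last term absent in case (a)).

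Finally I would carry out this computation. The Gaussian block contributes $2\big(\tfrac{1-\lambda}{2}\big)^2+2\big(\tfrac{\sqrt{\lambda(1-\lambda)}}{2}\big)^2=\tfrac{(1-\lambda)^2}{2}+\tfrac{\lambda(1-\lambda)}{2}=\tfrac{1-\lambda}{2}$. For the $\bC$-block one substitutes $a_1=\tfrac{\sqrt\lambda}{2M_p(2)}$, $a_2=-\tfrac{\sqrt\lambda}{p}$ and rewrites the moments through \eqref{eq:Mpq} using $\Gamma(1+s)=s\Gamma(s)$: this yields $M_p(p)=1$, $M_p(2)=p^{2/p}\Gamma(3/p)/\Gamma(1/p)$, $M_p(4)=p^{4/p}\Gamma(5/p)/\Gamma(1/p)$ and, crucially, $M_p(p+2)=3M_p(2)$, hence $c_{12}=2M_p(2)$ and $M_p(4)/M_p(2)^2=\Gamma(5/p)\Gamma(1/p)/\Gamma(3/p)^2$. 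Plugging in, $\langle(a_1,a_2),\bC(a_1,a_2)\rangle=\lambda\big(\tfrac{M_p(4)}{4M_p(2)^2}-\tfrac14-\tfrac2p+\tfrac1p\big)=\lambda\big(\tfrac{\Gamma(5/p)\Gamma(1/p)}{4\Gamma(3/p)^2}-\tfrac14-\tfrac1p\big)$; adding $\tfrac{1-\lambda}{2}$, doubling, clearing denominators and using once more $\Gamma(1+1/p)=\Gamma(1/p)/p$ turns $2\langle a,\Sigma a\rangle$ into exactly the reciprocal of the constant $\alpha_{p,\lambda}$ in \eqref{eq:ConstantAlpha}. In case (a) the same calculation applies with the last coordinate deleted and $\Sigma=\mathrm{diag}(\bC,2)$; since that case is only invoked when $\lambda=1$ --- where the coefficient $\tfrac{1-\lambda}{2}$ of $x_3$ in $\widetilde F$ vanishes, so the $x_3$-direction decouples and is minimised away --- it reduces to the $\lambda=1$ specialisation of the formula above. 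The only point requiring genuine care is the gamma-function bookkeeping (in particular the collapse $M_p(p+2)=3M_p(2)$, which makes the cross term simplify); everything else is routine convex optimisation.
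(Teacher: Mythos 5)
Your proposal is correct, and at its core it is the same argument as the paper's: both reduce Lemma \ref{lem:RateFunction} to minimizing the quadratic rate function of Lemma \ref{lem:MDPVectorR4} over the affine set $\{F=y\}$ (resp.\ $\{\widetilde F=y\}$). The difference is in the execution, and yours is the cleaner route: the paper sets up a generic quadratic $\widetilde G$ with symbolic parameters $A,a,b,c,\alpha,\beta,\gamma,\delta$, solves the Lagrange stationarity system explicitly, and only then substitutes the gamma-function expressions, whereas you identify the rate as $\tfrac12\langle x,\Sigma^{-1}x\rangle$ with $\Sigma=\mathrm{diag}(\bC,2,2)$ and invoke the one-line identity $\inf\{\tfrac12\langle x,\Sigma^{-1}x\rangle:\langle a,x\rangle=y\}=y^2/(2\langle a,\Sigma a\rangle)$, so that everything reduces to computing the limiting variance $\langle a,\Sigma a\rangle$ directly from the covariances $c_{11},c_{12},c_{22}$, with no inversion of $\bC$ needed. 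This buys two things: it makes the relation $\alpha_{p,\lambda}=1/(2\sigma^2(p,\lambda))$ of Remark \ref{rem:intro}(b) transparent, and it keeps the signs of the constraint vector straight — you correctly use $a=(\tfrac{\sqrt\lambda}{2M_p(2)},-\tfrac{\sqrt\lambda}{p},\tfrac{1-\lambda}{2},-\tfrac{\sqrt{\lambda(1-\lambda)}}{2})$, so the cross term enters with a minus sign, which is exactly what reproduces \eqref{eq:ConstantAlpha}; the paper's displayed substitution list ($\beta=\sqrt\lambda/p$ without the sign, and $\delta=\sqrt{\lambda(1-\lambda)/2}$) contains typos even though its final constant agrees with your computation. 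Your gamma-function bookkeeping ($M_p(p)=1$, $M_p(p+2)=3M_p(2)$, hence $c_{12}=2M_p(2)$, and $M_p(4)/M_p(2)^2=\Gamma(5/p)\Gamma(1/p)/\Gamma(3/p)^2$) checks out, and your treatment of case (a) via $\lambda=1$ matches the paper's closing remark that the regime in which (a) is used forces $\gamma$ (and $\delta$) to vanish. The only caveat, inherited from Lemma \ref{lem:MDPVectorPGeneralizedGaussian} itself rather than introduced by you, is that at $p=2$ the matrix $\bC$ is singular (indeed $A_2=0$), so "positive definite" and the formula for $J$ require $p>2$ or a separate degenerate treatment — a gap the paper's own proof shares.
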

\begin{proof}
For parameters $A,a,b,c\in\R$ let us introduce the function $\widetilde{G}:\R^4\to\R$ by
$$
\widetilde{G}(x_1,x_2,x_3,x_4) := {1\over 2A}\big(ax_1^2+bx_2^2+cx_1x_2\big)+{x_3^2+x_4^2\over 4}.
$$
Our goal is to minimize this function subject to the condition that $\alpha x_1+\beta x_2+\gamma x_3+\delta x_4=y$ for fixed $\alpha,\beta,\gamma,\delta\in\R$ and given $y\in\R$. For that purpose we define $G:\R^5\to\R$ by
$$
G(x_1,x_2,x_3,x_4,\lambda) := \widetilde{G}(x_1,x_2,x_3,x_4)+\lambda\big(\alpha x_1+\beta x_2+\gamma x_3+\delta x_4-y\big).
$$
Differentiation with respect to $x_1,\ldots,x_4$ gives
\begin{align*}
{\partial G\over\partial x_1} &= {1\over 2A}\big(2ax_1+cx_2\big)+\alpha\lambda,\qquad {\partial G\over\partial x_2} = {1\over 2A}\big(2bx_2+cx_1\big)+\beta\lambda,\\
{\partial G\over\partial x_3} &= {x_3\over 2}+\gamma\lambda,\hspace{2.8cm}{\partial G\over\partial x_4} = {x_4\over 2}+\delta\lambda.
\end{align*}
The system ${\partial G\over\partial x_1}=0,{\partial G\over\partial x_2}=0,{\partial G\over\partial x_3}=0,{\partial G\over\partial x_4}=0$ has the unique solution
\begin{align*}
x_1^0 = {2A\lambda(\beta c-2\alpha b)\over 4ab-c^2},\qquad x_2^0={2A\lambda(\alpha c-2\beta a)\over 4ab-c^2},\qquad x_3^0=-2\gamma\lambda,\qquad x_4^0=-2\delta\lambda.
\end{align*}
Plugging this into ${\partial G\over\partial\lambda}=\alpha x_1+\beta x_2+\gamma x_3+\delta x_4-y=0$ and solving for $\lambda$ yields the unique solution
$$
\lambda = y\,{c^2-4ab\over 4A(\alpha^2b+\beta^2a-\alpha\beta c)+2(4ab-c^2)(\delta^2+\gamma^2)}.
$$
Plugging this in turn into the expressions for $x_1^0,x_2^0,x_3^0,x_4^0$ obtained above yields
\begin{align*}
x_1^0 &= y\,{A(2\alpha b-\beta c)\over 2A(\alpha^2b+\beta^2a-\alpha\beta c)+2(4ab-c^2)(\delta^2+\gamma^2)},\\
x_2^0 &= y\,{A(2\beta a-\alpha c)\over 2A(\alpha^2b+\beta^2a-\alpha\beta c)+2(4ab-c^2)(\delta^2+\gamma^2)},\\
x_3^0 &= y\,{\gamma(4ab-c^2)\over 2A(\alpha^2b+\beta^2a-\alpha\beta c)+2(4ab-c^2)(\delta^2+\gamma^2)},\\
x_4^0 &= y\,{\delta(4ab-c^2)\over 2A(\alpha^2b+\beta^2a-\alpha\beta c)+2(4ab-c^2)(\delta^2+\gamma^2)}.
\end{align*}
Consequently,
\begin{align*}
\widetilde{G}(x_1^0,x_2^0,x_3^0,x_4^0) = y^2\,{4ab-c^2\over 4\big(2A(\alpha^2b+\beta^2a-\alpha\beta c)+(4ab-c^2)(\delta^2+\gamma^2)\big)}.
\end{align*}
In our set up, the parameters $A,a,b,c$ and $\alpha,\beta,\gamma,\delta$ are given as follows:
\begin{align*}
A=A_p,\qquad a=p,\qquad b=p^{4/p}\bigg({\Gamma({5\over p})\over\Gamma({1\over p})}-{\Gamma({3\over p})^2\over\Gamma({1\over p})^2}\bigg),\qquad c=-4p^{2/p}{\Gamma({3\over p})\over\Gamma({1\over p})}
\end{align*}
and
\begin{align*}
\alpha={\sqrt{\lambda}\over 2M_p(2)},\qquad\beta={\sqrt{\lambda}\over p},\qquad\gamma={1-\lambda\over 2},\qquad\delta=\sqrt{\lambda(1-\lambda)\over 2}.
\end{align*}
Plugging this into the above expression leads to
$$
{4ab-c^2\over 4\big(2A(\alpha^2b+\beta^2a-\alpha\beta c)+(4ab-c^2)(\delta^2+\gamma^2)\big)} = {2\Gamma({1\over p})\Gamma({3\over p})^2\over (2p-\lambda(4+3p))\Gamma(1+{1\over p})\Gamma({3\over p})^2+\lambda\Gamma({1\over p})^2\Gamma({5\over p})} = \alpha_{p,\lambda},
$$
after simplifications, where $\alpha_{p,\lambda}$ was defined in \eqref{eq:ConstantAlpha}. Note that this covers both cases $\lim_{n\to\infty}(n-k_n)=\infty$ and $\lim_{n\to\infty}(n-k_n)<\infty$, since in the latter case we automatically have $\lambda=1$, which in turn corresponds to the choice $\gamma=\delta=0$. This completes the argument.
\end{proof}

% % % % % % % % % % % % % % % % % % % % % % % % % % % % % % % % % % % %
\subsection{MDP for $\mathscr X_{n,p}$ via exponential equivalence}
% % % % % % % % % % % % % % % % % % % % % % % % % % % % % % % % % % % %

The purpose of this section is to complete the proof of the MDP for $(\mathscr X_{n,p})_{n\in\N}$ in Theorem \ref{thm:MDPgen}. We do this by showing that this sequence is exponentially equivalent to the random sequence $(\mathscr Y_{n,p})_{n\in\N}$. Moreover, if {$(n-k_n)=o\big(\frac{\sqrt{k_n}}{t_n}\big)$} we shall argue in addition that the random sequences $(\widetilde{\mathscr{Y}}_{n,p})_{n\in\N}$ and $(\mathscr Y_{n,p})_{n\in\N}$ are exponentially equivalent as well. As a consequence, $(\mathscr X_{n,p})_{n\in\N}$ and $(\mathscr Y_{n,p})_{n\in\N}$ satisfy an MDP at the same speed and with the same rate function (see Lemma \ref{prop:exponentially equivalent}), which implies that Theorem \ref{thm:MDPgen} follows from the intermediate MDP in Lemma \ref{lem:IntermediateMDP}. Moreover, the explicit form of the rate function is a consequence of Lemma \ref{lem:RateFunction}. So, what remains to prove is the following exponential equivalence.

\begin{lemma}[Exponential equivalence]
If $k_n=\omega(1)$, $t_n=\omega(1)$, and $t_n=o(\sqrt{k_n})$ the sequence of random variables $t_n^{-1}\mathscr X_{n,p}$ and the sequence of random variables $\mathscr Y_{n,p}$ are exponentially equivalent. Moreover, {if also $(n-k_n)=o\big(\frac{\sqrt{k_n}}{t_n}\big)$}, then the sequences of random variables $(\widetilde{\mathscr{Y}}_{n,p})_{n\in\N}$ and $(\mathscr Y_{n,p})_{n\in\N}$ are exponentially equivalent.
\end{lemma}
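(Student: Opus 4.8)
The plan is to read off both exponential equivalences directly from the probabilistic representation of $t_n^{-1}\mathscr X_{n,p}$ recorded at the start of this section, by estimating the tail of the relevant difference and checking that the bound beats $e^{-ct_n^2}$ for every $c>0$. By that representation, $t_n^{-1}\mathscr X_{n,p}-\mathscr Y_{n,p}$ has the same law as
$$
D_n:=-\frac{\sqrt{\lambda_n}}{p}\cdot\frac{W_n}{t_n\sqrt n}+\frac{\sqrt{\lambda_n}}{t_n\sqrt n}\,\Psi_p(V_n),
$$
where $V_n\in\R^5$ denotes the argument of $\Psi_p$; unwinding the definitions of the $\xi^{(n)}_{p,\cdot}$'s and the $\zeta^{(n)}_\cdot$'s, the five coordinates of $V_n$ are $\frac1n\sum_{i=1}^n(Z_i^2-M_p(2))$, $\frac1n\sum_{i=1}^n(|Z_i|^p-1)$, $\frac1{k_n}\sum_{i=1}^{k_n}(g_i^2-1)$, $\frac1n\sum_{i=1}^n(g_i^2-1)$, and $W_n/n$. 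For the second equivalence one only has to handle
$$
\mathscr Y_{n,p}-\widetilde{\mathscr Y}_{n,p}=-\frac{\sqrt{\lambda_n(1-\lambda_n)}}{2}\,\zeta_2^{(n)}=-\frac{\sqrt{\lambda_n}}{2\,t_n\sqrt n}\sum_{i=k_n+1}^n(g_i^2-1),
$$
the last identity using $\sqrt{1-\lambda_n}=\sqrt{(n-k_n)/n}$ together with the definition of $\zeta_2^{(n)}$ (the case $k_n=n$ being trivial).

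For the first equivalence I would fix $\delta>0$, set $\varepsilon=\delta/2$, and, using $\sqrt{\lambda_n}\le1$ and $W_n\ge0$, bound $\Pro[|D_n|>\delta]$ by $\Pro[W_n>p\varepsilon t_n\sqrt n]+\Pro[\frac1{t_n\sqrt n}|\Psi_p(V_n)|>\varepsilon]$. The first summand equals $\bW_n\big((p\varepsilon t_n\sqrt n,\infty)\big)$, whose logarithm over $t_n^2$ tends to $-\infty$ by assumption \eqref{eq:ConditionW}. For the second summand I would use the defining property of $\Psi_p$: there are constants $M,\delta_0\in(0,\infty)$ with $|\Psi_p(\mathbf x)|\le M\|\mathbf x\|_2^2$ whenever $\|\mathbf x\|_2<\delta_0$. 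Since $t_n\sqrt n\to\infty$, for large $n$ one has $M\delta_0^2/(t_n\sqrt n)<\varepsilon$, so eventually $\{\frac1{t_n\sqrt n}|\Psi_p(V_n)|>\varepsilon\}\subseteq\{\|V_n\|_2\ge\delta_0\}$, and a union bound reduces everything to the tails of the five coordinates of $V_n$. Cram\'er's theorem (Lemma \ref{lem:cramer}) applies to the four empirical-mean coordinates — finiteness of the exponential moments of $Z_1^2$, $|Z_1|^p$ and $g_1^2$ near the origin being precisely what $p\ge2$ buys us — so each such coordinate leaves a neighbourhood of $0$ with probability at most $e^{-cn}$ or $e^{-ck_n}$ for some $c>0$; since $t_n=o(\sqrt{k_n})$ gives $t_n^2=o(k_n)$ and a fortiori $t_n^2=o(n)$, the corresponding $t_n^{-2}\log$ tend to $-\infty$. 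For the coordinate $W_n/n$ one notes that $t_n=o(\sqrt n)$ (a consequence of $t_n=o(\sqrt{k_n})$ and $k_n\le n$) yields $t_n\sqrt n=o(n)$, hence $\Pro[W_n/n\ge\delta_0/\sqrt5]\le\bW_n\big((t_n\sqrt n,\infty)\big)$ for large $n$, and \eqref{eq:ConditionW} applies once more. Collecting the estimates gives $\limsup_{n\to\infty}t_n^{-2}\log\Pro[|D_n|>\delta]=-\infty$.

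For the second equivalence I would put $m_n:=n-k_n$. As $(n-k_n)=o(\sqrt{k_n}/t_n)$ and $t_n=\omega(1)$, we get $m_n=o(\sqrt{k_n})$, hence $k_n/n\to1$ and $m_n=o(\sqrt n/t_n)$, i.e.\ $m_nt_n/\sqrt n\to0$. Using $\sqrt{\lambda_n}\le1$, for any $\delta>0$ one has $\Pro[|\mathscr Y_{n,p}-\widetilde{\mathscr Y}_{n,p}|>\delta]\le\Pro[|\sum_{i=k_n+1}^n(g_i^2-1)|>2\delta t_n\sqrt n]$, and for large $n$ the lower-tail part is void because $\sum_{i=k_n+1}^n(g_i^2-1)\ge-m_n>-2\delta t_n\sqrt n$. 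For the upper tail, set $u_n:=2\delta t_n\sqrt n/m_n$, which tends to $\infty$, and apply the standard Chernoff estimate $\Pro[\sum_{i=1}^{m}g_i^2>m(1+u)]\le\exp(-\tfrac m2(u-\log(1+u)))$: once $u_n-\log(1+u_n)\ge u_n/2$ this gives $\Pro[\sum_{i=k_n+1}^n(g_i^2-1)>2\delta t_n\sqrt n]\le\exp(-m_nu_n/4)=\exp(-\delta t_n\sqrt n/2)$, whence $t_n^{-2}\log$ of this probability is at most $-\delta\sqrt n/(2t_n)\to-\infty$ as $t_n=o(\sqrt n)$. This is the claimed exponential equivalence of $(\widetilde{\mathscr Y}_{n,p})_{n\in\N}$ and $(\mathscr Y_{n,p})_{n\in\N}$, and it finishes the proof.

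The one genuinely delicate point is this last step: the hypothesis $(n-k_n)=o(\sqrt{k_n}/t_n)$ is exactly what forces the short sum $\sum_{i=k_n+1}^n(g_i^2-1)$ of only $m_n$ summands to attain a value that is enormous on the scale $\sqrt{m_n}$ of its own fluctuations, making its deviation probability super-exponentially small at speed $t_n^2$; everything else is routine accounting, the recurring mechanism being that $t_n=o(\sqrt{k_n})$ keeps every extraneous error term (Gaussian, $p$-generalized Gaussian, and the $W_n$-tail) negligible on the scale $t_n^2$.
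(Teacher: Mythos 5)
Your proposal is correct and follows essentially the same route as the paper: both start from the probabilistic representation, reduce the first equivalence to the $W_n$-tail (handled by condition \eqref{eq:ConditionW}) plus the $\Psi_p$-remainder (handled via the quadratic bound $|\Psi_p(\mathbf x)|\le M\|\mathbf x\|_2^2$ near the origin, a union bound over the five coordinates, and Cram\'er-type estimates at speeds $n$ and $k_n$, which dominate $t_n^2$), and reduce the second equivalence to a super-exponential tail bound for $\sum_{i=k_n+1}^n(g_i^2-1)$ under $(n-k_n)=o\big(\tfrac{\sqrt{k_n}}{t_n}\big)$. The only deviations are minor technical variants — you use $t_n\sqrt n\to\infty$ to place the $\Psi_p$-event directly inside $\{\|V_n\|_2\ge\delta_0\}$ instead of the paper's extra split into a scaled-norm event and a ratio event, and you bound the short Gaussian sum by a chi-square Chernoff estimate rather than the paper's union bound over individual Gaussians with a Gaussian tail inequality — and both variants yield the same conclusions.
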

\begin{proof}
We start by noting that, for any $\varepsilon>0$,
\begin{align*}
&\frac{1}{t_n^2}\log\Pro\left[\Big|\frac{1}{t_n}\mathscr X_{n,p}-\mathscr Y_{n,p}\Big|>\varepsilon\right]\\
&\leq\frac{1}{t_n^2}\log\left(\Pro\left[\frac{\sqrt{\lambda_n}}{pt_n\sqrt{n}}|W_n|>\frac{\epsilon}{2}\right]+\Pro\left[\frac{\sqrt{\lambda_n}}{t_n\sqrt{n}}\left|\Psi_p\left(\frac{t_n\xi^{(n)}_{p,2}}{ \sqrt{n}},\frac{t_n\xi^{(n)}_{p,p}}{\sqrt{n}},\frac{t_n\zeta^{(n)}_1}{\sqrt{k_n}},\frac{t_n\zeta_3^{(n)}}{\sqrt{n}},\frac{W_n}{n}\right)\right|>\frac{\epsilon}{2}\right]\right)\cr
&\leq\frac{1}{t_n^2}\log\left(\Pro\left[\frac{\sqrt{\lambda_n}}{pt_n\sqrt{n}}|W_n|>\frac{\epsilon}{2}\right]+\Pro\left[\frac{\sqrt{\lambda_n}}{t_n\sqrt{n}}\left\Vert\left(\frac{t_n\xi^{(n)}_{p,2}}{ \sqrt{n}},\frac{t_n\xi^{(n)}_{p,p}}{\sqrt{n}},\frac{t_n\zeta^{(n)}_1}{\sqrt{k_n}},\frac{t_n\zeta_3^{(n)}}{\sqrt{n}},\frac{W_n}{n}\right)\right\Vert_2^2>\frac{\varepsilon}{2M}\right]\right.\cr
&\qquad\qquad\left.+\Pro\left[\frac{\left|\Psi_p\left(\frac{t_n\xi^{(n)}_{p,2}}{ \sqrt{n}},\frac{t_n\xi^{(n)}_{p,p}}{\sqrt{n}},\frac{t_n\zeta^{(n)}_1}{\sqrt{k_n}},\frac{t_n\zeta_3^{(n)}}{\sqrt{n}},\frac{W_n}{n}\right)\right|}{\left\Vert\left(\frac{t_n\xi^{(n)}_{p,2}}{ \sqrt{n}},\frac{t_n\xi^{(n)}_{p,p}}{\sqrt{n}},\frac{t_n\zeta^{(n)}_1}{\sqrt{k_n}},\frac{t_n\zeta_3^{(n)}}{\sqrt{n}},\frac{W_n}{n}\right)\right\Vert_2^2}>M\right]\right),
\end{align*}
where $M$ is the constant associated to $\Psi_p$. On the one hand, by assumption \eqref{eq:ConditionW}, we have that
$$
\frac{1}{t_n^2}\log\Pro\left[\frac{\sqrt{\lambda_n}}{pt_n\sqrt{n}}|W_n|>\frac{\epsilon}{2}\right]\leq\frac{1}{t_n^2}\log\Pro\left[|W_n|>\frac{p\epsilon t_n\sqrt{n}}{2}\right]\to-\infty,
$$
as $n\to\infty$. On the other hand, notice that, if $M$ and $\delta$ are the constants associated to $\Psi_p$,
\begin{align*}
&\Pro\left[\frac{\left|\Psi_p\left(\frac{t_n\xi^{(n)}_{p,2}}{ \sqrt{n}},\frac{t_n\xi^{(n)}_{p,p}}{\sqrt{n}},\frac{t_n\zeta^{(n)}_1}{\sqrt{k_n}},\frac{t_n\zeta_3^{(n)}}{\sqrt{n}},\frac{W_n}{n}\right)\right|}{\left\Vert\left(\frac{t_n\xi^{(n)}_{p,2}}{ \sqrt{n}},\frac{t_n\xi^{(n)}_{p,p}}{\sqrt{n}},\frac{t_n\zeta^{(n)}_1}{\sqrt{k_n}},\frac{t_n\zeta_3^{(n)}}{\sqrt{n}},\frac{W_n}{n}\right)\right\Vert_2^2}>M\right]\\
&\qquad\leq\Pro\left[\left\Vert\left(\frac{t_n\xi^{(n)}_{p,2}}{ \sqrt{n}},\frac{t_n\xi^{(n)}_{p,p}}{\sqrt{n}},\frac{t_n\zeta^{(n)}_1}{\sqrt{k_n}},\frac{t_n\zeta_3^{(n)}}{\sqrt{n}},\frac{W_n}{n}\right)\right\Vert_2>\delta\right]\cr
&\qquad\qquad+\Pro\left[\frac{t_n|\xi^{(n)}_{p,2}|}{\sqrt{n}}>\frac{\delta}{\sqrt{5}}\right]+\Pro\left[\frac{t_n|\xi^{(n)}_{p,p}|}{\sqrt{n}}>\frac{\delta}{\sqrt{5}}\right]+\Pro\left[\frac{t_n|\zeta^{(n)}_1|}{\sqrt{k_n}}>\frac{\delta}{\sqrt{5}}\right]\cr
&\qquad\qquad+\Pro\left[\frac{t_n|\zeta_3^{(n)}|}{\sqrt{n}}>\frac{\delta}{\sqrt{5}}\right]+\Pro\left[\frac{|W_n|}{n}>\frac{\delta}{\sqrt{5}}\right].
\end{align*}
By Cram\'er's theorem in Lemma \ref{lem:cramer}, the sequence of random variables $\frac{t_n|\xi^{(n)}_{p,2}|}{\sqrt{n}}=\frac{1}{n}\sum_{i=1}^n(Z_i^2-M_p(2))$ follows an LDP with speed $n$. So, there exists a constant $c_1:=c_1(p,\delta)\in(0,\infty)$ depending on $\delta$ and $p$ only, and $N_0\in\N$ such that if $n\geq N_0$,
$$
\Pro\left[\frac{t_n|\xi^{(n)}_{p,2}|}{\sqrt{n}}>\frac{\delta}{\sqrt{5}}\right]\leq e^{-c_1n}.
$$
Therefore,
$$
\lim_{n\to\infty}\frac{1}{t_n^2}\log\Pro\left[\frac{t_n|\xi^{(n)}_{p,2}|}{\sqrt{n}}>\frac{\delta}{\sqrt{5}}\right]\leq-\lim_{n\to\infty}\frac{c_1n}{t_n^2}=-\infty.
$$
In the same way we conclude that
$$
\lim_{n\to\infty}\frac{1}{t_n^2}\log\Pro\left[\frac{t_n|\xi^{(n)}_{p,p}|}{\sqrt{n}}>\frac{\delta}{\sqrt{5}}\right]-\infty\qquad\text{and}\qquad \lim_{n\to\infty}\frac{1}{t_n^2}\log\Pro\left[\frac{t_n|\zeta_3^{(n)}|}{\sqrt{n}}>\frac{\delta}{\sqrt{5}}\right]-\infty.
$$
Also, the sequence of random variables $\frac{t_n|\zeta^{(n)}_1|}{\sqrt{k_n}}=\frac{1}{k_n}\sum_{i=1}^{k_n}(g_i^2-1)$ satisfies an LDP with speed $k_n$, implying that there exists a constant $c_2:=c_2(\delta)\in(0,\infty)$ only depending on $\delta$, and $N_1\in\N$ such that if $n\geq N_1$, then
$$
\Pro\left[\frac{t_n|\zeta^{(n)}_1|}{\sqrt{k_n}}>\frac{\delta}{\sqrt{5}}\right]\leq e^{-c_2k_n}.
$$
Therefore, by our assumption on the growth of $k_n$, this implies that
$$
\lim_{n\to\infty}\frac{1}{t_n^2}\log\Pro\left[\frac{t_n|\zeta^{(n)}_1|}{\sqrt{k_n}}>\frac{\delta}{\sqrt{5}}\right]\leq-\lim_{n\to\infty}\frac{c_2k_n}{t_n^2}=-\infty.
$$
Besides, by condition \eqref{eq:ConditionW} %and once again our assumption on the growth of $k_n$,
$$
\lim_{n\to\infty}\frac{1}{t_n^2}\log\Pro\left[\frac{|W_n|}{n}>\frac{\delta}{\sqrt{5}}\right]\leq\lim_{n\to\infty}\frac{1}{t_n^2}\log\Pro\left[|W_n|>\frac{\delta n}{\sqrt{5}}\right]=-\infty.
$$
Finally,
\begin{align*}
&\Pro\left[\frac{\sqrt{\lambda_n}}{t_n\sqrt{n}}\left\Vert\left(\frac{t_n\xi^{(n)}_{p,2}}{ \sqrt{n}},\frac{t_n\xi^{(n)}_{p,p}}{\sqrt{n}},\frac{t_n\zeta^{(n)}_1}{\sqrt{k_n}},\frac{t_n\zeta_3^{(n)}}{\sqrt{n}},\frac{W_n}{n}\right)\right\Vert_2^2>\frac{\varepsilon}{2M}\right]\cr
&\qquad\leq\Pro\left[\frac{t_n\xi^{(n)}_{p,2}}{\sqrt{n}}>\frac{\sqrt{\varepsilon t_n\sqrt{n}}}{\lambda_n^{1/4}\sqrt{10M}}\right]+\Pro\left[\frac{t_n\xi^{(n)}_{p,p}}{\sqrt{n}}>\frac{\sqrt{\varepsilon t_n\sqrt{n}}}{\lambda_n^{1/4}\sqrt{10M}}\right]\cr
&\qquad\qquad+\Pro\left[\frac{t_n\zeta^{(n)}_1}{\sqrt{k_n}}>\frac{\sqrt{\varepsilon t_n\sqrt{n}}}{\lambda_n^{1/4}\sqrt{10M}}\right]+\Pro\left[\frac{t_n\zeta_3^{(n)}}{\sqrt{n}}>\frac{\sqrt{\varepsilon t_n\sqrt{n}}}{\lambda_n^{1/4}\sqrt{10M}}\right]\\
&\qquad\qquad+\Pro\left[\frac{|W_n|}{n}>\frac{\sqrt{\varepsilon t_n\sqrt{n}}}{\lambda_n^{1/4}\sqrt{10 M}}\right].
\end{align*}
Like before, there exists a constant $c_1:=c_1(\varepsilon,M)\in(0,\infty)$ depending only on $\varepsilon$ and $M$, and $N_0\in\N$ such that, for all $n\geq N_0$,
$$
\Pro\left[\frac{t_n\xi^{(n)}_{p,2}}{\sqrt{n}}>\frac{\sqrt{\varepsilon t_n\sqrt{n}}}{\lambda_n^{1/4}\sqrt{10M}}\right]\leq\Pro\left[\frac{t_n\xi_1^{(n)}}{\sqrt{n}}>\frac{\sqrt{\varepsilon}}{\sqrt{10M}}\right]\leq e^{-c_1 n},
$$
which in turn implies that
$$
\lim_{n\to\infty}\frac{1}{t_n^2}\log\Pro\left[\frac{t_n\xi_1^{(n)}}{\sqrt{n}}>\frac{\sqrt{\varepsilon t_n\sqrt{n}}}{\lambda_n^{1/4}\sqrt{10M}}\right]\leq-\lim_{n\to\infty}\frac{c_1 n}{t_n^2}=-\infty.
$$
In the same way, we conclude that
\begin{align*}
& \lim_{n\to\infty}\frac{1}{t_n^2}\log\Pro\left[\frac{t_n\xi^{(n)}_{p,p}}{\sqrt{n}}>\frac{\sqrt{\varepsilon t_n\sqrt{n}}}{\lambda_n^{1/4}\sqrt{10M}}\right]=-\infty,\\
& \lim_{n\to\infty}\frac{1}{t_n^2}\log\Pro\left[\frac{t_n\zeta_3^{(n)}}{\sqrt{n}}>\frac{\sqrt{\varepsilon t_n\sqrt{n}}}{\lambda_n^{1/4}\sqrt{10M}}\right]\leq-\lim_{n\to\infty}\frac{c_1 n}{t_n^2}=-\infty,\\
& \lim_{n\to\infty}\frac{1}{t_n^2}\log\Pro\left[\frac{t_n\zeta^{(n)}_1}{\sqrt{n}}>\frac{\sqrt{\varepsilon t_n\sqrt{n}}}{\lambda_n^{1/4}\sqrt{10M}}\right]\leq-\lim_{n\to\infty}\frac{c_2 k_n}{t_n^2}=-\infty.
\end{align*}
Also, by condition \eqref{eq:ConditionW}, since $t_n=\omega(1)$
$$
\lim_{n\to\infty}\frac{1}{t_n^2}\log\Pro\left[|W_n|>\frac{n\sqrt{\varepsilon t_n\sqrt{n}}}{\lambda_n^{1/4}\sqrt{10 M}}\right]\leq\lim_{n\to\infty}\frac{1}{t_n^2}\log\Pro\left[|W_n|>\frac{n^{5/4}\sqrt{t_n}\sqrt{\varepsilon}}{\sqrt{10 M}}\right]=-\infty.
$$
As a consequence, we have that if  $t_n=o(\sqrt{k_n})$, as $n\to\infty$, for any $\varepsilon>0$,
$$
\lim_{n\to\infty}\frac{1}{t_n^2}\log\Pro\left[\Big|\frac{1}{t_n}\mathscr X_{n,p}-\mathscr Y_{n,p}\Big|>\varepsilon\right]=-\infty.
$$
%Under the condition that $\lim\limits_{n\to\infty}(n-k_n)=\infty$ 
This proves that the sequences of random variables $\mathscr X_{n,p}$ and $\mathscr Y_{n,p}$ are exponentially equivalent. %and completes the argument in this case.

To finish the proof, in view of Lemma \ref{prop:exponentially equivalent} it is enough to argue that if {$(n-k_n)=o\big(\frac{\sqrt{k_n}}{t_n}\big)$}, then the sequences of random variables $\widetilde{\mathscr{Y}}_{n,p}$ and ${\mathscr{Y}}_{n,p}$ are exponentially equivalent as well. For this we recall the definition of $\zeta_2^{(n)}$ and write, for $\varepsilon>0$,
\begin{align*}
\Pro\big[|{\mathscr{Y}}_{n,p}-\widetilde{\mathscr{Y}}_{n,p}|>\varepsilon\big] = \Pro\bigg[\Big|\sum_{i=k_n+1}^n(g_i^2-1)\Big|>{2\varepsilon n t_n\over\sqrt{k_n}}\bigg].
\end{align*}
Denoting by $g$ a standard Gaussian random variable and applying a union bound we see that
\begin{align*}
\Pro\big[|{\mathscr{Y}}_{n,p}-\widetilde{\mathscr{Y}}_{n,p}|>\varepsilon\big] &\leq \Pro\Bigg[\sum_{i=k_n+1}^ng_i^2>{2\varepsilon n t_n\over \sqrt{k_n}}-(n-k_n)\Bigg]\\
&\leq (n-k_n)\Pro\Bigg[g>\sqrt{{2\varepsilon n t_n\over (n-k_n)\sqrt{k_n}}-1}\,\Bigg].
\end{align*}
But since $\Pro[g>x]\leq (x\sqrt{2\pi})^{-1}e^{-x^2/2}$ for any $x>0$, we conclude that
\begin{align*}
\lim_{n\to\infty}{1\over t_n^2}\Pro\big[|{\mathscr{Y}}_{n,p}-\widetilde{\mathscr{Y}}_{n,p}|>\varepsilon\big] &\leq -\lim_{n\to\infty}{1\over t_n^2}{2\varepsilon n t_n\over (n-k_n)\sqrt{k_n}} = - \lim_{n\to\infty}{2\varepsilon\over n-k_n}{\sqrt{k_n}\over t_n}{n\over k_n}.
\end{align*}
Since $n-k_n=o\big(\frac{\sqrt{k_n}}{t_n}\big)$, the last expression tends to $-\infty$, as $n\to\infty$. This proves the desired exponential equivalence.
\end{proof}

\subsection*{Acknowledgement}
Parts of the research presented in this work have been carried out during the workshop \textit{New Perspectives and Computational Challenges in High Dimensions} that took place at the Mathematical Research Institute Oberwolfach (MFO) in February 2020. All support for providing an excellent atmosphere and working conditions is gratefully acknowledged. JP is supported by the Austrian Science Fund (FWF) Project P32405 ``Asymptotic Geometric Analysis and Applications''.

\bibliographystyle{plain}
\bibliography{mdp}

\end{document}